\date{18 February 2010}
\title{Central Extensions of Gerbes}
\newcommand{\hide}[1]{#1}
\author{Amnon Yekutieli}
\address{Department of  Mathematics
Ben Gurion University,
Be'er Sheva 84105,
Israel}
\email{amyekut@math.bgu.ac.il}
\thanks{{\em Mathematics Subject Classification} 2000.
Primary: 20L05; Secondary: 18G50, 20J06, 18F20.}
\keywords{Gerbes, central extensions, obstruction classes, nonabelian
cohomology.}
\thanks{This research was supported by the US-Israel Binational
Science Foundation and the Israel Science Foundation.}
\newtheorem{thm}[equation]{Theorem}
\newtheorem{cor}[equation]{Corollary}
\newtheorem{prop}[equation]{Proposition}
\newtheorem{lem}[equation]{Lemma}
\theoremstyle{definition}
\newtheorem{dfn}[equation]{Definition}
\newtheorem{rem}[equation]{Remark}
\newtheorem{exa}[equation]{Example}
\newtheorem{que}[equation]{Question}
\newtheorem{construction}[equation]{Construction}
\numberwithin{equation}{section}
\newcommand{\iso}{\xrightarrow{\simeq}}
\newcommand{\xar}{\xrightarrow}
\newcommand{\opn}{\operatorname}
\newcommand{\cat}[1]{\operatorname{\mathsf{#1}}}
\newcommand{\rmitem}[1]{\item[\text{\textup{(#1)}}]}
\newcommand{\mcal}[1]{\mathcal{#1}}
\newcommand{\mrm}[1]{\mathrm{#1}}
\newcommand{\mbb}[1]{\mathbb{#1}}
\newcommand{\tup}[1]{\textup{#1}}
\newcommand{\bsym}[1]{\boldsymbol{#1}}
\newcommand{\til}[1]{\tilde{#1}}
\newcommand{\K}{\mbb{K}}
\newcommand{\N}{\mbb{N}}
\renewcommand{\d}{\mrm{d}}
\newcommand{\twoto}{\Rightarrow}
\newcommand{\twoiso}{\stackrel{\simeq}{\Longrightarrow}}
\newcommand{\twocong}{\stackrel{\simeq}{\Longleftrightarrow}}
\newcommand{\gerbe}[1]{\bsym{\mcal{#1}}}
\newcommand{\twocat}[1]{\operatorname{\bsym{\cat{#1}}}}
\newcommand{\twofun}[1]{\operatorname{\bsym{#1}}}
\begin{document}

\NoCompileMatrices

\begin{abstract}
We introduce the notion of central extension of gerbes on a topological space
$X$. We show that there are obstruction classes to lifting objects and
isomorphisms in a central extension. We also discuss pronilpotent gerbes.
These results are used in the paper \cite{Ye} to study twisted deformation
quantization on algebraic varieties. 
\end{abstract}

\maketitle
\tableofcontents

\setcounter{section}{-1}
\section{Introduction}
\label{sec.Int}

A {\em gerbe} $\gerbe{G}$ on a topological space $X$ is the geometric
version of a connected nonempty groupoid. Thus $\gerbe{G}$
associates a groupoid $\gerbe{G}(U)$ to any open set $U \subset X$,
and to any inclusion $V \subset U$
of open sets there is a restriction functor
$\gerbe{G}(U) \to \gerbe{G}(V)$.
These have to satisfy a lot of conditions (for the benefit of the reader we
have included a review in Section \ref{sec:prest}).
Gerbes arise in various contexts; but
for us the are mainly important as ``bookkeeping devices'' for certain
geometric data. At the end of the introduction we will outline the main
application we have in mind. 

A key question is to determine if a given gerbe $\gerbe{G}$ is
{\em trivial}, namely if $\opn{Ob} \gerbe{G}(X) \neq \emptyset$.
When $\gerbe{G}$ is {\em abelian}, with band some sheaf $\mcal{N}$
of abelian groups, there is an obstruction class in the \v{C}ech cohomology
group $\check{\mrm{H}}^2(X, \mcal{N})$ that vanishes if and only if  $\gerbe{G}$
is trivial. However for a {\em nonabelian} gerbe $\gerbe{G}$ there is no
useful obstruction theory, since the structure is too complicated.
There is Giraud's nonabelian cohomology theory \cite{Gi}, but that
does not provide an effective answer. 

We noticed during our work on deformation quantization that the
gerbes occurring there are {\em pronilpotent} (see explanation
below). Such gerbes are composed of central extensions, and for
those extensions we can construct useful obstruction classes. 

A {\em central extension of gerbes} on $X$ is a diagram
\begin{equation} \label{eqn:14}
1 \to  \mcal{N} \to \gerbe{G} \xar{F} \gerbe{H} \to 1 , 
\end{equation}
in which $\gerbe{G}$ and $\gerbe{H}$ are gerbes,  
$F : \gerbe{G} \to \gerbe{H}$ is a weak epimorphism of gerbes, and
$\mcal{N} = \opn{Ker}(F)$ is a sheaf of abelian groups in the center of 
$\gerbe{G}$.
This notion is technically quite complicated (see Definition \ref{dfn:2}), but
in principle it is just a generalization of the notion of central extension of
sheaves of groups
\begin{equation} \label{eqn:26}
1 \to \mcal{N} \to \mcal{G} \to \mcal{H} \to 1 .
\end{equation}

Consider a central extension of gerbes (\ref{eqn:14}). Suppose 
$i, j$ are two objects of $\gerbe{G}(X)$, and 
$h : F(i) \to F(j)$ is an isomorphism if $\gerbe{H}(X)$. 
Then there is obstruction class
\[ \opn{cl}^1_F(h) \in \check{\mrm{H}}^1(X, \mcal{N}) . \]
The first main result of the paper, Theorem \ref{thm.3}, says that 
$\opn{cl}^1_F(h)$ vanishes if and only if $h$ can be lifted to an
isomorphism $g : i \to j$ in $\gerbe{G}(X)$.

Given an object $j$ of $\gerbe{H}(X)$, 
we define (under some hypothesis) an obstruction class
\[ \opn{cl}^2_F(j) \in \check{\mrm{H}}^2(X, \mcal{N}) . \]
The second main result of the paper, Theorem \ref{thm.2}, says that $j$
lifts to an object of $\gerbe{G}(X)$ if and only if 
$\opn{cl}^2_F(j) = 1$. 

There are three typical situations where central extensions of gerbes occur.
The first is when we are given a central extension of sheaves of groups 
(\ref{eqn:26}). This is discussed briefly in Example \ref{exa:27}.

Another situation is when we take any gerbe $\gerbe{G}$, and look at 
$\mcal{N} := \mrm{Z}(\gerbe{G})$, the center of $\gerbe{G}$, which
is a sheaf of abelian groups. We get a central extension
\[ 1 \to  \mrm{Z}(\gerbe{G}) \to \gerbe{G} \xar{F} 
\gerbe{G} / \mrm{Z}(\gerbe{G}) \to 1 . \]
Global objects of $\gerbe{G} / \mrm{Z}(\gerbe{G})$ are called
{\em fake global objects of $\gerbe{G}$}. See Section \ref{sec.fake}. 

The third situation, which is the most important for us, is when the
gerbe $\gerbe{G}$ is pronilpotent,
i.e.\ it is complete with respect to a central filtration
$\{ \gerbe{N}_p \}_{p \in \mbb{N}}$
(see Definition \ref{dfn.12}). 
Then for any $p$ there is a central extension of gerbes
\[ 1 \to \gerbe{N}_p / \gerbe{N}_{p+1}
\to \gerbe{G} / \gerbe{N}_{p+1} \to
\gerbe{G} / \gerbe{N}_p \to 1 . \]
The obstruction classes can detect whether the groupoid
$(\gerbe{G} / \gerbe{N}_p)(X)$ is nonempty or connected
for any $p$; but passing to the limit is more delicate. This is done in the
third main result of the paper, namely Theorem \ref{thm.4}. 

Presumably our results can be extended, with minor changes, to sites
other than a topological space (e.g.\ the \'etale site of a scheme).
But we did not explore this direction. 

Here is an outline of the role gerbes have in our paper \cite{Ye}. 
Suppose $X$ is a smooth algebraic variety over a
field $\K$ of characteristic $0$. We are interested in {\em twisted
deformations} of $\mcal{O}_X$. A twisted (associative or
Poisson) deformation $\gerbe{A}$ is a collection of locally defined 
(associative or Poisson) deformations $\mcal{A}_i$ of $\mcal{O}_X$, together
with a collection of locally defined gauge equivalences 
$\mcal{A}_i \iso \mcal{A}_j$ between them. The bookkeeping data of
deformations and gauge equivalences are encoded in the {\em gauge gerbe} 
$\gerbe{G}$ of $\gerbe{A}$. Here is just a hint of how this goes --
see Remark \ref{rem.1} for a few more details, or the paper \cite{Ye} for the
full story. Let $U \subset X$ be an open set. 
Then to any object $i$ in the groupoid $\gerbe{G}(U)$ we
attach a deformation 
$\mcal{A}_i$ of $\mcal{O}_{U}$; and to any morphism 
$g : i \to j$ in $\gerbe{G}(U)$ we attach a gauge equivalence
$\gerbe{A}(g) : \mcal{A}_i \iso \mcal{A}_j$.
Thus the groupoid $\gerbe{G}(X)$ carries the
information of global deformations: objects of 
$\gerbe{G}(X)$ correspond to global deformations of $\mcal{O}_X$
belonging to $\gerbe{A}$, and isomorphic objects correspond to gauge
equivalent deformations. Since the gauge gerbe $\gerbe{G}$ is 
pronilpotent, we can often use Theorems \ref{thm.3}, \ref{thm.2} and
\ref{thm.4} to figure out how many connected components the
groupoid $\gerbe{G}(X)$ has.

\medskip \noindent
\textbf{Acknowledgments.}
Work on this paper began together with Fredrick Leitner, and I wish to thank
him for his contributions, without which the paper could not have been written. 
Thanks also to Lawrence Breen for reading an early version of the
paper and offering valuable suggestions. Finally I wish to thank the referee
for his/her input, and especially for discovering a subtle error in one of the
main results, as it was stated in a previous version of the paper.

\section{Recalling some Facts on $2$-Categories}
\label{sec.recall}

There are several sources in the literature on $2$-categories and prestacks,
e.g.\ \cite{Be}, \cite{Gi}, \cite{Ma}, \cite{Mo}, \cite{Le}, \cite{KS}
and \cite{Br}. Unfortunately
there is disagreement on terminology among the sources, and hence we
feel it is
better to start with an exposition of the conventions we adopted, and a
recollection some facts. 

First we must establish some set-theoretical background, in order to
avoid paradoxical phenomena. Recall that in set theory all
mathematical objects and operations are interpreted as sets, with
suitable additional properties. Following \cite{Ma}
we fix a {\em Grothendieck universe} $\cat{U}$, which is a set closed
under standard set-theoretical operations, and large enough such
that the objects of interest for us (e.g.\ the topological space $X$ 
in Section \ref{sec:prest}) are elements of
$\cat{U}$. We refer to elements of $\cat{U}$ as {\em small sets}.
A category $\cat{C}$ such that
$\opn{Ob}(\cat{C}) \in \cat{U}$,
and 
$\opn{Hom}_{\cat{C}}(C_0, C_1) \in \cat{U}$
for every pair $C_0, C_1 \in \opn{Ob}(\cat{C})$,
is called a {\em small category}.

By $\cat{Set}$ we refer the category of small sets; thus in effect
$\opn{Ob}(\cat{Set}) = \cat{U}$.
Likewise $\cat{Grp}$, $\cat{Mod} A$ etc.\ refer to the categories of
small groups, small $A$-modules (over a small ring $A$) etc.\ 
A category $\cat{C}$ such that 
$\opn{Ob}(\cat{C}) \subset \cat{U}$,
and $\opn{Hom}_{\cat{C}}(C_0, C_1) \in \cat{U}$
for every pair $C_0, C_1 \in \opn{Ob}(\cat{C})$,
is called a $\cat{U}$-category.
Thus $\cat{Set}$ is a $\cat{U}$-category, but it is not small. 

Next we introduce a bigger universe $\cat{V}$, such that 
$\cat{U} \in \cat{V}$. Then 
$\opn{Ob}(\cat{Set}),$ \linebreak 
$\opn{Ob}(\cat{Grp}),\ldots \in \cat{V}$.
In order to distinguish between them, we call $\cat{U}$ the small
universe, and $\cat{V}$ is the large universe. 
The set of all $\cat{U}$-categories is denoted by
$\twocat{Cat}$. Note that 
$\twocat{Cat}$ is a $\cat{V}$-category, but (this is the whole
point!) it is not a $\cat{U}$-category.

By default sets, groups etc.\ will be assumed to be small; and
categories will be assumed to be $\cat{U}$-categories. 

A {\em $2$-category} $\twocat{C}$ is a ``category enriched in
categories''. (Some authors use the term ``strict $2$-category''.)
This means the following. There is a set $\opn{Ob} (\twocat{C})$, whose
elements are called {\em objects} of $\twocat{C}$. For any pair of objects
$\cat{C}_0, \cat{C}_1 \in \opn{Ob} (\twocat{C})$
there is a category 
$\twocat{C}(\cat{C}_0, \cat{C}_1)$.
The objects of the category $\twocat{C}(\cat{C}_0, \cat{C}_1)$
are called {\em $1$-morphisms}, and the morphisms of 
$\twocat{C}(\cat{C}_0, \cat{C}_1)$ are called {\em $2$-morphisms}.
For every triple 
$\cat{C}_0, \cat{C}_1, \cat{C}_2 \in \opn{Ob} (\twocat{C})$
there is a bifunctor
\[ \twocat{C}(\cat{C}_0, \cat{C}_1) \times
\twocat{C}(\cat{C}_1, \cat{C}_2) \to
\twocat{C}(\cat{C}_0, \cat{C}_2) , \]
called {\em horizontal composition}.
Horizontal composition has to be associative (as bifunctor).
For any $\cat{C} \in \opn{Ob} (\twocat{C})$ there is a distinguished
$1$-morphism
$\bsym{1}_{\cat{C}} \in \opn{Ob} \bigl( \twocat{C}(\cat{C}, \cat{C}) \bigr)$,
called the identity $1$-morphism of $\cat{C}$. Horizontal composition with
$\bsym{1}_{\cat{C}}$, on either side, is 
required to be the identity functor. 

Given a $1$-morphism 
$F \in \opn{Ob} \bigl( \twocat{C}(\cat{C}_0, \cat{C}_1) \bigr)$,
we write $F : \cat{C}_0 \to \cat{C}_1$. 
The notation for horizontal composition is $\circ$; so given
$1$-morphisms 
$F_1 : \cat{C}_0 \to \cat{C}_1$ and
$F_2 : \cat{C}_1 \to \cat{C}_2$, their composition is
$F_2 \circ F_1 : \cat{C}_0 \to \cat{C}_2$.
We sometimes denote the set 
$\opn{Ob} \bigl( \twocat{C}(\cat{C}_0, \cat{C}_1) \bigr)$
of $1$-morphisms $\cat{C}_0 \to \cat{C}_1$  by
$\opn{Hom}_{\twocat{C}}(\cat{C}_0, \cat{C}_1)$.

Let 
$F, G \in \opn{Ob} \bigl( \twocat{C}(\cat{C}_0, \cat{C}_1) \bigr)$,
and let
$\eta \in \opn{Hom}_{\twocat{C}(\cat{C}_0, \cat{C}_1)}(F, G)$;
i.e.\ $\eta$ is a $2$-morphism. We write 
$\eta: F \Rightarrow G$.
This data is usually depicted as a diagram:
\hide{ \[ \UseTips
\xymatrix  @C=5ex @R=1.5ex  {
& {}
\ar@{=>}[dd]_{\eta}
\\
\cat{C}_0
\ar@(ur,ul)[rr]^{F}
\ar@(dr,dl)[rr]_{G}
& & \cat{C}_1
\\
& {}
} \] }
The composition rule in the category
$\twocat{C}(\cat{C}_0, \cat{C}_1)$
is called {\em vertical composition}, and we denote it by $*$.
Thus if $H \in \opn{Ob} \bigl( \twocat{C}(\cat{C}_0, \cat{C}_1) \bigr)$ 
is another $1$-morphism, and $\zeta : G \Rightarrow H$ is a
$2$-morphism, then
by vertical composition we get
$\zeta * \eta : F \Rightarrow H$.
\hide{ \[ \UseTips
\xymatrix  @C=5ex @R=1.5ex  {
& {}
\ar@{=>}[d]_{\eta}
\\
\cat{C}_0
\ar@(ur,ul)[rr]^{F}
\ar[rr]^(0.7){G}
\ar@(dr,dl)[rr]_{H}
& \ar@{=>}[d]_{\zeta}
& \cat{C}_1
\\
& {}
} \qquad
\xymatrix  @C=5ex @R=1.5ex  {
& {}
\ar@{=>}[dd]_{\zeta * \eta}
\\
\cat{C}_0
\ar@(ur,ul)[rr]^{F}
\ar@(dr,dl)[rr]_{H}
& & \cat{C}_1
\\
& {}
} \] }
Let us denote by $\bsym{1}_{F}$ the identity automorphism of the
object $F$ in
the category
$\twocat{C}(\cat{C}_0, \cat{C}_1)$.
Then $\bsym{1}_{F} * \eta = \eta = \eta * \bsym{1}_{G}$. 

The pictorial description of horizontal composition is this:
given a diagram 
\hide{ \[ \UseTips
\xymatrix  @C=5ex @R=1.5ex  {
& {}
\ar@{=>}[dd]_{\eta_1}
& & 
\ar@{=>}[dd]_{\eta_2}
\\
\cat{C}_0
\ar@(ur,ul)[rr]^{F_1}
\ar@(dr,dl)[rr]_{G_1}
& & \cat{C}_1
\ar@(ur,ul)[rr]^{F_2}
\ar@(dr,dl)[rr]_{G_2}
& & \cat{C}_1
\\
& & & {}
} \] }
the horizontal composition of the $2$-morphisms $\eta_1$ and $\eta_2$ is
\hide{ \[ \UseTips
\xymatrix  @C=5ex @R=1.5ex  {
& {}
\ar@{=>}[dd]_{\eta_2 \circ \eta_1}
\\
\cat{C}_0
\ar@(ur,ul)[rr]^{F_2 \circ F_1}
\ar@(dr,dl)[rr]_{G_2 \circ G_1}
& & \cat{C}_2
\\
& {}
} \] }

The horizontal and vertical compositions are required to satisfy the following
condition, called the {\em exchange condition}. Suppose we are
given a diagram 
\hide{ \[ \UseTips
\xymatrix  @C=5ex @R=1.5ex  {
& {}
\ar@{=>}[d]_{\eta_1}
& {}
& 
\ar@{=>}[d]_{\eta_2}
\\
\cat{C}_0
\ar@(ur,ul)[rr]^{F_1}
\ar[rr]^(0.7){G_1}
\ar@(dr,dl)[rr]_{H_1}
& \ar@{=>}[d]_{\zeta_1} 
& {}
\cat{C}_1
\ar@(ur,ul)[rr]^{F_2}
\ar[rr]^(0.7){G_2}
\ar@(dr,dl)[rr]_{H_2}
& \ar@{=>}[d]_{\zeta_2} 
& {}
\cat{C}_2
\\
& {}
& {}
& {}
} \] }
in $\twocat{C}$. Then 
\[ (\zeta_2 \circ \zeta_1) * (\eta_2 \circ \eta_1) = 
(\zeta_2 * \eta_2) \circ (\zeta_1 * \eta_1) \]
as $2$-morphisms 
$F_2 \circ F_1 \Rightarrow H_2 \circ H_1$.

Regarding set-theoretical issues, we require that 
$\opn{Ob}(\twocat{C}) \subset \cat{V}$,
$\opn{Ob} \bigl( \twocat{C}(\cat{C}_0, \cat{C}_1) \bigr) \in \cat{V}$,
and
$\opn{Hom}_{\twocat{C}(\cat{C}_0, \cat{C}_1)}(F, G) \in \cat{U}$.
Note that if we forget the $2$-morphisms in $\twocat{C}$, then
$\twocat{C}$ becomes a $\cat{V}$-category.

The basic example of a $2$-category is this:

\begin{exa} \label{exa.1} 
The $2$-category of $\cat{U}$-categories, denoted by $\bsym{\cat{Cat}}$.
The set $\opn{Ob} (\bsym{\cat{Cat}})$ of all $\cat{U}$-categories was already
mentioned. The $1$-morphisms in
$\bsym{\cat{Cat}}(\cat{C}_0, \cat{C}_1)$
are the functors $F : \cat{C}_0 \to \cat{C}_1$ between these categories.
And the $2$-morphisms $\eta : F \Rightarrow G$ are the natural
transformations. The composition rules are the usual ones.
\end{exa}

Here is another example, of a different flavor. 

\begin{exa}
Let $\K$ be a commutative ring. The category
$\cat{DGMod} \K$ of DG (differential graded) $\K$-modules can be made into a 
$2$-category, as follows. 
Given $M, N \in \opn{Ob} (\cat{DGMod} \K)$, the 
$1$-morphisms $F : M \to N$ are the usual morphisms in 
$\cat{DGMod} \K$, i.e.\ $\K$-linear DG module homomorphisms. 
Now such a homomorphism $F : M \to N$ can be viewed as a $0$-cocycle
in the DG module $\opn{Hom}_{\K}(M, N)$. Given another such homomorphism
$G : M \to N$, the $2$-morphisms $\eta : F \Rightarrow G$ are by definition 
the $0$-coboundaries $\eta \in \opn{Hom}_{\K}(M, N)$ such that 
$G = \eta + F$. Compositions are obvious.
\end{exa}

Suppose $F, G \in \opn{Hom}_{\twocat{C}}(\cat{C}_0, \cat{C}_1)$. We
say
that $F$ and $G$ are {\em $2$-isomorphic} if there is some
$2$-isomorphism
$\eta : F \twoiso G$ in the category 
$\opn{Hom}_{\twocat{C}}(\cat{C}_0, \cat{C}_1)$; 
we denote this by $F \twocong G$. 
A diagram (of $1$-morphisms)
\hide{ \[ \UseTips \xymatrix @C=5ex @R=5ex {
\cat{C}_0
\ar[r]^{D}
\ar[dr]_{F}
& 
\cat{C}_1
\ar[d]^{E}
\\
& 
\cat{C}_2
}  \] }
is {\em commutative up to $2$-isomorphism} if 
$E \circ D \twocong F$. 

There is an intrinsic notion of {\em equivalence} in a $2$-category
$\twocat{C}$.
A $1$-morphism $F : \cat{C} \to \cat{D}$ is an called an equivalence
if there
is a $1$-morphism $G : \cat{D} \to \cat{C}$
such that $G \circ F \twocong \bsym{1}_{\cat{C}}$ and 
$F \circ G \twocong \bsym{1}_{\cat{D}}$.
This generalizes the usual notion of equivalence (of categories) in
Example \ref{exa.1}.

Suppose $\twocat{C}$ and $\twocat{D}$ are $2$-categories. A 
{\em $2$-functor} $\twofun{F} : \twocat{C} \to \twocat{D}$
is a triple 
$\bsym{F} = (F_0, F_1, F_2)$, 
consisting of functions of the following kinds. The function $F_0$, 
called the $0$-component of $\twofun{F}$,
assigns to each object
$\cat{C} \in \opn{Ob} \twocat{C}$, an object
$F_0(\cat{C}) \in \opn{Ob} \twocat{D}$. 
The function $F_1$ assigns to each morphism
$G : \cat{C}_0 \to \cat{C}_1$ in $\twocat{C}$, a $1$-morphism
\[ F_1(G) : F_0 (\cat{C}_0) \to F_0 (\cat{C}_1)  \]
in $\twocat{D}$. And the function $F_2$  assigns to each $2$-morphism
$\eta : G \twoto G'$ in $\twocat{C}$, a $2$-morphism 
\[ F_2(\eta) : F_1(G) \twoto F_1(G') \]
in $\twocat{D}$. The condition is that the functions 
$(F_0, F_1,F_2)$ preserve compositions and units. Thus, if we forget
$2$-morphisms, the pair $(F_0, F_1)$ is a functor
\[ (F_0, F_1) : \twocat{C} \to \twocat{D} \]
between these categories. And for every 
$\cat{C}_0, \cat{C}_1 \in \opn{Ob} \twocat{C}$, 
the pair $(F_1, F_2)$ is a functor
\[ (F_1, F_2) : \opn{Hom}_{\twocat{C}}(\cat{C}_0, \cat{C}_1) \to 
\opn{Hom}_{\twocat{D}} \bigl( F_0(\cat{C}_0), F_0(\cat{C}_1) \bigr) .
\]

Let $\twocat{C}$ and $\twocat{D}$ be $2$-categories, and let
$\twofun{F}, \twofun{G} : \twocat{C} \to \twocat{D}$
be $2$-functors, with components 
\[ \twofun{F} = (F_0, F_1, F_2)\, , \, \twofun{G} = (G_0, G_1, G_2) . \]
A $1$-morphism (sometimes called a $2$-natural
transformation) 
$\twofun{p} : \twofun{F} \to \twofun{G}$ 
is a function that assigns to each $\cat{C} \in \opn{Ob} \twocat{C}$
a $1$-morphism 
\[ \twofun{p}_{\cat{C}} : F_0(\cat{C}) \to  G_0(\cat{C}) \]
in $\twocat{D}$, such that for every 
$E \in \opn{Hom}_{\twocat{C}}(\cat{C}_0, \cat{C}_1)$
one has
\[ \twofun{p}_{\cat{C}_1} \circ F_1(E) =
G_1(E) \circ \twofun{p}_{\cat{C}_0}  \]
in 
$\opn{Hom}_{\twocat{D}} \bigl( F_0(\cat{C}_0), G_0(\cat{C}_1) \bigr)$.
Given another $2$-functor $\twofun{H} : \twocat{C} \to \twocat{D}$, 
and a $1$-morphism 
$\twofun{q} : \twofun{G} \to \twofun{H}$, 
the composition 
$\twofun{q} \circ \twofun{p} : \twofun{F} \to \twofun{H}$
is defined in the obvious way.

Now suppose 
$\twofun{p}, \twofun{q} : \twofun{F} \to \twofun{G}$
are $1$-morphisms between $2$-functors
$\twofun{F} , \twofun{G} : \twocat{C} \to \twocat{D}$ as above.
A $2$-morphism $\twofun{\eta} : \twofun{p} \to \twofun{q}$
(sometimes called a modification) is a function that assigns to each 
$\cat{C} \in \opn{Ob} \twocat{C}$, a $2$-morphism
$\twofun{\eta}_{\cat{C}} : \twofun{p}_{\cat{C}} \twoto
\twofun{q}_{\cat{C}}$
in 
$\opn{Hom}_{\twocat{D}} \bigl( F_0(\cat{C}), G_0(\cat{C}) \bigr)$.
The condition is that 
\[ \twofun{\eta}_{\cat{C}_1} \circ \twofun{p}_{\cat{C}_1} \circ
\twofun{F} = 
\twofun{\eta}_{\cat{C}_0} \circ \twofun{q}_{\cat{C}_0} \circ
\twofun{G} , \]
as functions
\[ \opn{Hom}_{\twocat{C}}(\cat{C}_0, \cat{C}_1)  \to 
\opn{Hom}_{\twocat{D}} \bigl( F_0(\cat{C}_0), G_0(\cat{C}_1) \bigr) .
\]
If $\twofun{r} : \twofun{F} \to \twofun{G}$
is yet another $1$-morphism, and 
$\twofun{\zeta} : \twofun{q} \to \twofun{r}$
is a $2$-morphism, then the composition 
$\twofun{\zeta} * \twofun{\eta} : \twofun{p} \to \twofun{r}$ is
defined in the obvious
way. We say that the $2$-morphism 
$\twofun{\eta} : \twofun{p} \to \twofun{q}$
is a $2$-isomorphism if each $\twofun{\eta}_{\cat{C}}$ is a
$2$-isomorphism. 

A $2$-functor $\twofun{F} : \twocat{C} \to \twocat{D}$ 
is called a {\em $2$-equivalence} if there is a $2$-functor
$\twofun{G} : \twocat{D} \to \twocat{C}$, and $2$-isomorphisms
$\twofun{G} \circ \twofun{F} \twoiso \twofun{1}_{\twocat{C}}$
and 
$\twofun{F} \circ \twofun{G} \twoiso \twofun{1}_{\twocat{D}}$.
If a $2$-equivalence $\twocat{C} \to \twocat{D}$ exists, then we say
that $\twocat{C}$ and $\twocat{D}$ are $2$-equivalent.

One could make the set of all $2$-categories, with the operations defined above,
into a $2$-category, but that would take us outside of the large universe
$\cat{V}$. Therefore we shall be careful to consider only ``small collections''
of $2$-categories in this paper.

We shall also need to recall what are {\em pseudofunctors}
(sometimes called normalized pseudofunctors, or morphisms of
bicategories) from a category 
$\cat{N}$ to a $2$-category $\twocat{C}$.
A pseudofunctor $\bsym{F} : \cat{N} \to \twocat{C}$
is a triple $\bsym{F} = (F_0, F_1, F_2)$, consisting of functions of
the following kinds. 
The function $F_0$, called the $0$-component of $\bsym{F}$, assigns to
each object 
$N \in \opn{Ob} \cat{N}$ an object $F_0 (N) \in \opn{Ob} \twocat{C}$.
The function $F_1$ assigns to each morphism
$f : N_0 \to N_1$ in $\cat{N}$ a $1$-morphism
\[ F_1(f) : F_0 (N_0) \to F_0 (N_1)  \]
in $\twocat{C}$.
And the function $F_2$  assigns to each composable pair of morphisms
\[ N_0 \xar{f_1} N_1 \xar{f_2} N_2 \] 
in $\cat{N}$, a $2$-isomorphism
\[ F_2(f_1, f_2) : F_1(f_2) \circ F_1(f_1) \twoiso F_1(f_2 \circ f_1) 
\]
in $\twocat{C}$.
Here are the conditions. First, 
\begin{equation} \label{eqn:12}
F_2(f_2 \circ f_1, f_3) * F_2(f_1, f_2)  = 
F_2(f_1, f_3 \circ f_2) * F_2(f_2 , f_3) 
\end{equation}
for any composable triple 
\[ N_0 \xar{f_1} N_1 \xar{f_2} N_2 \xar{f_3} N_3 \]
of morphisms in $\cat{N}$.
Next, for any object $N \in \cat{N}$, with identity morphism 
$\bsym{1}_N$, it is required that
$F_1(\bsym{1}_N) = \bsym{1}_{F_0(N)}$, the identity $1$-morphism of
$F_0(N)$.
And lastly, the $2$-isomorphisms 
\[ F_2( \bsym{1}_{N_0}, f_1 ) : F_1(f_1) \circ \bsym{1}_{F_0(N_0)}
\twoiso
F_1(f_1) \]
and
\[ F_2( f_1, \bsym{1}_{N_1} ) : \bsym{1}_{F_0(N_1)} \circ F_1(f_1)
\twoiso
F_1(f_1) \]
have to be the identity $2$-automorphism of the $1$-morphism
$F_1(f_1)$.

The final abstract $2$-categorical fact that we need is that given a small
category $\cat{N}$ and a $2$-category $\twocat{C}$, the set
of all pseudofunctors $\bsym{F} : \cat{N} \to \twocat{C}$
is itself a $2$-category. The $1$-morphisms are defined as follows. 
Suppose $\bsym{F}, \bsym{G} : \cat{N} \to \twocat{C}$ are
pseudofunctors, with
components $\bsym{F} = (F_0, F_1, F_2)$ and $\bsym{G} = (G_0, G_1,
G_2)$. 
A $1$-morphism 
$\bsym{p} : \bsym{F} \to \bsym{G}$ is a pair 
$\bsym{p} = (p_1, p_2)$, whose $1$-component $p_1$ is a function
assigning
to any object $N \in \opn{Ob} \cat{N}$ a $1$-morphism 
\[ p_1(N) : F_0(N) \to G_0(N) \]
in $\twocat{C}$; and the $2$-component $p_2$ is a function assigning
to any
morphism $f : N_0 \to N_1$ in $\cat{N}$ a $2$-isomorphism
\[ p_2(f) : p_1(N_1) \circ F_1(f) \twoiso G_1(f) \circ p_1(N_0) \]
in $\twocat{C}$.
These are required to satisfy
the condition
\begin{equation} \label{eqn:8}
p_2(f_2 * f_1) * F_2(f_1, f_2) = 
G_2(f_1, f_2) * p_2(f_1) * p_2(f_2) 
\end{equation}
in $\opn{Hom}_{\twocat{C}} \bigl( F_0(N_0), G_0(N_1) \bigr)$,
for any composable pair of morphisms
$N_0 \xar{f_1} N_1 \xar{f_2} N_2$ in $\cat{N}$.

Horizontal composition of $1$-morphisms is defined as follows. Suppose
$\bsym{H} : \cat{N} \to \twocat{C}$ is another pseudofunctor, and
$\bsym{q} : \bsym{G} \to \bsym{H}$ is a $1$-morphism.
Their components are $\bsym{H} = (H_0, H_1, H_2)$ and
$\bsym{q}=  (q_1, q_2)$. 
Let
\[ r_1(N) : F_0(N) \to H_0(N) \]
be the $1$-morphism
\[ r_1(N) := q_1(N) \circ p_1(N) , \]
and let 
\[ r_2(f) : r_1(N_1) \circ F_1(f) \twoto
H_1(f) \circ r_1(N_0) \]
be the $2$-morphism
\[ r_2(f) := q_2(f) * p_2(f) . \]
Then we define the $1$-morphism
\[ \bsym{q} \circ \bsym{p} : \bsym{F} \to \bsym{H} \]
to be
\[ \bsym{q} \circ \bsym{p} := (r_1, r_2) . \]

Next consider $1$-morphisms 
$\bsym{p}, \bsym{q} : \bsym{F} \to \bsym{G}$.
A $2$-morphism $\bsym{\eta} : \bsym{p} \twoto \bsym{q}$ 
has only a $2$-component $\eta_2$, which is a function that
assigns to
each object $N \in \opn{Ob} \cat{N}$ a $2$-morphism
\[ \eta_2(N) : p_1(N) \twoto q_1(N) \]
in $\twocat{C}$. The condition is that
\[ q_2(f) * \eta_2(N_0) = \eta_2(N_1) * p_2(f) \]
for any $f : N_0 \to N_1$ in $\cat{N}$.
Given yet another $1$-morphism $\bsym{r} : \bsym{F} \to \bsym{G}$,
and a $2$-morphism $\bsym{\zeta} = (\zeta_2) : \bsym{q} \twoto \bsym{r}$, the
vertical composition 
$\bsym{\theta} := \bsym{\zeta} * \bsym{\eta} : \bsym{p} \twoto \bsym{r}$ 
has $2$-component
\[ \theta_2(N) := \zeta_2(N) * \eta_2(N) . \]

\section{Prestacks on a Topological Space}
\label{sec:prest}

Let $X$ be a topological space. 
We need some notation for open coverings. Let $U \subset X$ be an open set, and
let $\bsym{U} = \{ U_k \}_{k \in K}$ be an open covering of $U$, i.e.\ 
$U = \bigcup_{k \in K} U_k$. Given 
$k_0, \ldots, k_m \in K$ we write
\[ U_{k_0, \ldots, k_m} := U_{i_0} \cap \cdots \cap U_{k_m} . \]

Let $\mcal{S}$ be a sheaf of sets on $X$. For an open set $U \subset X$ we
denote by 
$\mcal{S}(U) = \Gamma(U, \mcal{S})$
the set of sections of $\mcal{S}$ on $U$.

Recall that a {\em prestack} $\gerbe{G}$ on $X$
is the geometrization of the notion of category, 
in the same way that a presheaf of sets is the geometrization of the
notion of a set. Formally speaking a prestack $\gerbe{G}$ is a 
pseudofunctor
\[ \gerbe{G} 
= (\gerbe{G}_0, \gerbe{G}_1, \gerbe{G}_2) :
(\cat{Open} X)^{\mrm{op}} \to 
\bsym{\cat{Cat}} , \]
where $\cat{Open} X$ is the small category whose objects are the open sets 
$U \subset X$, and the morphisms $V \to U$ are the inclusions
$V \subset U$. However we shall make things more explicit here, and introduce
some notation, to emphasize the geometry. 

Thus a prestack $\gerbe{G}$ 
on $X$ has the following structure. 
For any open set $U \subset X$ there is a category
$\gerbe{G}(U) := \gerbe{G}_0(U)$. 
Elements of the set $\opn{Ob} \gerbe{G}(U)$ shall be denoted by
the letters $i, j$ etc.; this is because we want to view them as indices.
We write
\begin{equation}
\gerbe{G}(U)(i,j) := \opn{Hom}_{\gerbe{G}(U)} (i, j) , 
\end{equation}
the set of morphisms in the category $\gerbe{G}(U)$ from $i$ to $j$.

There are restriction functors ($1$-morphisms $\twocat{Cat}$)
\[ \opn{rest}^{\gerbe{G}}_{U_1 / U_0} := \gerbe{G}_1(U_1 \to
U_0) : \gerbe{G}(U_0) \to
\gerbe{G}(U_1) \] 
for any inclusion $U_1 \subset U_0$ of open sets. 
And there are composition isomorphisms ($2$-isomorphisms in
$\twocat{Cat}$)
\[ \gamma^{\gerbe{G}}_{U_2 / U_1 / U_0} := 
\gerbe{G}_2(U_2 \to U_1 \to U_0) 
: \opn{rest}^{\gerbe{G}}_{U_2 / U_1} \circ 
\opn{rest}^{\gerbe{G}}_{U_1 / U_0} \twoiso
\opn{rest}^{\gerbe{G}}_{U_2 / U_0} \]
for a double inclusion $U_2 \subset U_1 \subset U_0$. 
Condition (\ref{eqn:12}) now becomes 
\begin{equation} \label{eqn:6} 
\gamma^{\gerbe{G}}_{U_3 / U_2 / U_0} * 
\gamma^{\gerbe{G}}_{U_2 / U_1 / U_0} = 
\gamma^{\gerbe{G}}_{U_3 / U_1 / U_0} *
\gamma^{\gerbe{G}}_{U_3 / U_2 / U_1}
\end{equation}
for a triple inclusion $U_3 \subset U_2 \subset U_1 \subset U_0$. 
And there are corresponding conditions for $U \xar{=} U$.

As explained in Section \ref{sec.recall}, the set of prestacks on
$X$ has a structure of $2$-category, which we denote by
$\bsym{\cat{PreStack}}\, X$. Again, we want to be more specific. 
Suppose $\gerbe{G}$ and $\gerbe{H}$ are two prestacks on
$X$. A {\em morphism of prestacks}
$F : \gerbe{G} \to \gerbe{H}$
is a $1$-morphism between these pseudofunctors. Thus there is a
functor
\[ F(U) : \gerbe{G}(U) \to \gerbe{H}(U) \]
for any open set $U$, together with an isomorphism of functors
\[ \psi^F_{U_1 / U_0} : F(U_1) \circ 
\opn{rest}^{\gerbe{G}}_{U_1 / U_0} \twoiso 
\opn{rest}^{\gerbe{H}}_{U_1 / U_0} \circ \, F(U_0) \]
for any inclusion $U_1 \subset U_0$ of open sets. These isomorphisms
are required to satisfy condition 
\[ \psi^F_{U_2 / U_0} * \gamma^{\gerbe{G}}_{U_2 / U_1 / U_0} =
\gamma^{\gerbe{H}}_{U_2 / U_1 / U_0} * 
\psi^F_{U_2 / U_1}  * \psi^F_{U_1 / U_0} \]
for a double inclusion $U_2 \subset U_1 \subset U_0$.

The composition of morphisms of prestacks
$\gerbe{G} \xar{F} \gerbe{H} \xar{E} \gerbe{K}$
is denoted by $E \circ F$. 

Suppose $D, E, F : \gerbe{G} \to \gerbe{H}$
are morphisms between prestacks. We will denote $2$-morphisms between $E$
and $F$ by $\eta : E \twoto F$.
And the (vertical) composition with a $2$-morphism
$\zeta : D \twoto E$ is denoted by $\eta * \zeta : D \twoto F$. 

As in any $2$-category, we can say when a morphism of prestacks
$F : \gerbe{G} \to \gerbe{H}$ (i.e.\ a $1$-morphism
in $\bsym{\cat{PreStack}}\, X$) is an equivalence. 
This just means that there is a morphism of prestacks
$E : \gerbe{H} \to \gerbe{G}$,
and $2$-isomorphisms 
$E \circ F \twoiso \bsym{1}_{\gerbe{G}}$
and
$F \circ E \twoiso \bsym{1}_{\gerbe{H}}$. 
But here there is also a geometric characterization: $F$ is an equivalence if
and only if for any open set $U \subset X$ the functor
$F(U) : \gerbe{G}(U) \to \gerbe{H}(U)$
is an equivalence. 

Let $\gerbe{G}$ be a prestack on $X$. By a {\em subprestack} of $\gerbe{G}$ we
mean a prestack $\gerbe{N}$ such that $\gerbe{N}(U)$ is a subcategory of 
$\gerbe{G}(U)$ for every open set $U$, and such that the restriction functors
$\mrm{rest}^{\gerbe{N}}_{-/-}$ and the composition isomorphisms
$\gamma^{\gerbe{N}}_{- / - / -}$ are the same as those of $\gerbe{G}$.

Suppose $\gerbe{G}$ is a prestack on $X$. Take an open set $U \subset X$
and two objects $i, j \in \opn{Ob} \gerbe{G}(U)$. There is a
presheaf of sets $\gerbe{G}(i,j)$ on $U$, called the {\em presheaf of
morphisms}, defined as follows.
For an open set $V \subset U$ we define the set
\[ \gerbe{G}(i,j)(V) := 
\opn{Hom}_{\gerbe{G}(V)} \bigl( \mrm{rest}^{\gerbe{G}}_{V/U}(i),
\mrm{rest}^{\gerbe{G}}_{V/U}(j) \bigr) . \]
For an inclusion $V_1 \subset V_0 \subset U$ of open sets, the
restriction function 
\[ \mrm{rest}^{\gerbe{G}}(i,j)_{V_1 / V_0} :
\gerbe{G}(i,j)(V_0) \to \gerbe{G}(i,j)(V_1) \]
is the composed function
\[ \begin{aligned}
& \opn{Hom}_{\gerbe{G}(V_0)} \bigl(
\mrm{rest}^{\gerbe{G}}_{V_0/U}(i),
\mrm{rest}^{\gerbe{G}}_{V_0/U}(j) \bigr) \\
& \quad \xar{\mrm{rest}^{\gerbe{G}}_{V_1 / V_0}}
\opn{Hom}_{\gerbe{G}(V_0)} \bigl(
(\mrm{rest}^{\gerbe{G}}_{V_1/V_0} \circ
\mrm{rest}^{\gerbe{G}}_{V_0/U}) (i), 
(\mrm{rest}^{\gerbe{G}}_{V_1/V_0} \circ
\mrm{rest}^{\gerbe{G}}_{V_0/U}) (j) \bigr) \\
& \quad \xar{\gamma^{\gerbe{G}}_{V_1 / V_0 / U}}
\opn{Hom}_{\gerbe{G}(V_1)} \bigl(
\mrm{rest}^{\gerbe{G}}_{V_1/U}(i),
\mrm{rest}^{\gerbe{G}}_{V_1/U}(j) \bigr) . 
\end{aligned} \]
Condition (\ref{eqn:6}) ensures that
\[ \mrm{rest}^{\gerbe{G}}(i,j)_{V_2 / V_1} \circ
\mrm{rest}^{\gerbe{G}}(i,j)_{V_1 / V_0} =
\mrm{rest}^{\gerbe{G}}(i,j)_{V_2 / V_0} \]
for an inclusion 
$V_2 \subset V_1 \subset V_0 \subset U$.
Note that the set of sections of this presheaf is
\[ \Gamma(V, \gerbe{G}(i,j)) = \gerbe{G}(V)(i,j) . \]

{} From now on we shall usually write
$i|_V$ instead of $\mrm{rest}^{\gerbe{G}}_{V / U}(i)$, 
for a local object $i \in \opn{Ob} \gerbe{G}(U)$;
and $g|_{V_1}$ instead 
$\mrm{rest}^{\gerbe{G}}(i,j)_{V_1 / V_0}(g)$, for a local morphism 
$g \in \gerbe{G}(i,j)(V_0)$. Furthermore, 
we usually omit reference to the restriction
functors $\mrm{rest}^{\gerbe{G}}_{-/-}$ altogether.

Another convention that we shall adopt from here on is that we
denote the composition in the local categories
$\gerbe{G}(U)$ of a prestack $\gerbe{G}$ by ``$\circ$'', and not by 
``$*$'' as we did up to here.

Let $F : \gerbe{G} \to \gerbe{H}$ be a morphism of prestacks. 
One says that $F$ is a {\em weak equivalence} if it satisfies these conditions:
\begin{enumerate}
\rmitem{i} $F$ is {\em locally essentially surjective on objects}. This mean
that for any open set $U \subset X$,
object $j \in \opn{Ob} \gerbe{H}(U)$ and point $x \in U$, there is an open
set $V$ with $x \in V \subset U$, an object 
$i \in \opn{Ob} \gerbe{G}(V)$, and an isomorphism
$h : F(i) \iso j$ in $\gerbe{H}(V)$. 
\rmitem{ii} For any open set $U$ and $i, j \in \opn{Ob} \gerbe{G}(U)$ the
function
\[ F : \gerbe{G}(U)(i, j) \to \gerbe{H}(U) \bigl( F(i), F(j) \bigr) \]
is bijective. In other words, the functor
$F : \gerbe{G}(U) \to \gerbe{H}(U)$
is fully faithful. 
\end{enumerate}

A prestack $\gerbe{G}$ is called a {\em stack} if it satisfies 
these two conditions:
\begin{enumerate}
\rmitem{a} {\em Descent for morphisms}. This means that the presheaves of
morphisms $\gerbe{G}(i,j)$ are all sheaves. 
\rmitem{b} {\em Descent for objects}. This means 
that given an open set $U$, an open covering 
$U = \bigcup_{k \in K} U_k$, objects
$i_k \in \opn{Ob} \gerbe{G}(U_k)$, and isomorphisms
\[ g_{k_0, k_1} \in \gerbe{G}(U_{k_0, k_1})(i_{k_0}|_{U_{k_0, k_1}}, 
i_{k_1}|_{U_{k_0, k_1}}) \] 
that satisfy
\[ g_{k_1, k_2}|_{U_{k_0, k_1, k_2}} \circ 
g_{k_0, k_1}|_{U_{k_0, k_1, k_2}}  = g_{k_0, k_2}|_{U_{k_0, k_1, k_2}}  , \]
there exists an object
$i \in \gerbe{G}(U)$, 
and isomorphisms 
$g_k \in \gerbe{G}(U_{k})(i|_{U_{k}}, i_{k})$,
such that
\[ g_{k_0, k_1} \circ g_{k_0}|_{U_{k_0, k_1}}  = g_{k_1}|_{U_{k_0, k_1}} . \]
\end{enumerate}

Observe that by condition (a), the object $i \in \gerbe{G}(U)$ in condition (b)
is unique up to a unique isomorphism. A prestack $\gerbe{G}$ satisfying
condition (a) is sometimes called a {\em separated prestack}. 

We denote by $\bsym{\cat{Stack}}\, X$ the full sub $2$-category of
$\bsym{\cat{PreStack}}\, X$ gotten by taking all stacks, all
$1$-morphisms between them, and all $2$-morphisms between these
$1$-morphisms. 

It is not hard to see that a morphism of stacks $F : \gerbe{G} \to \gerbe{H}$ is
an equivalence if and only if it is a weak equivalence.

There is a {\em stackification} operation, which is analogous to sheafification:
to any prestack $\gerbe{G}$ one assigns a stack
$\til{\gerbe{G}}$, with a morphism of prestacks
$F : \gerbe{G} \to \til{\gerbe{G}}$.
These have the following universal property: given any stack 
$\gerbe{H}$ and morphism 
$E : \gerbe{G} \to \gerbe{H}$,
there is a morphism $\til{E} : \til{\gerbe{G}} \to \gerbe{H}$,
unique up to $2$-isomorphism, such that 
$E \twocong \til{E} \circ F$. 

Recall that a {\em groupoid} is a
category $\cat{G}$ in which all morphisms are isomorphisms. For an object $i$
the set $\cat{G}(i,i)$ is then a group. If the set 
$\cat{G}(i, j) \neq \emptyset$,
then it is a $\cat{G}(j, j)$-$\cat{G}(i, i)$-bitorsor. For 
$g \in \cat{G}(i, j)$ we denote by $\opn{Ad}(g)$ the group isomorphism 
$\cat{G}(i, i) \to \cat{G}(j, j)$ given by
$\opn{Ad}(g)(h) := g \circ h \circ g^{-1}$. 

By a {\em prestack of groupoids} on $X$ we mean a prestack 
$\gerbe{G}$ such that each of the categories $\gerbe{G}(U)$ is
a groupoid.
We denote by $\bsym{\cat{PreStGr}}\, X$
the full sub $2$-category of
$\bsym{\cat{PreStack}}\, X$ gotten by taking all prestacks of groupoids, all
$1$-morphisms between them, and all $2$-morphisms between these
$1$-morphisms. 
If $\gerbe{G}$ is a prestack of groupoids, then the associated stack
$\til{\gerbe{G}}$ is a stack of groupoids. 

We shall be interested in {\em gerbes}. 
A gerbe is a stack of groupoids $\gerbe{G}$ on $X$ that has these two
properties:
\begin{enumerate}
\rmitem{$\dag$} $\gerbe{G}$ is {\em locally nonempty}. What this means is
that any point $x \in X$ has an open neighborhood $U$ such that 
$\opn{Ob} \gerbe{G}(U) \neq \emptyset$. 
\rmitem{$\dag \dag$} $\gerbe{G}$ is {\em locally connected}. This
says that for any $i, j \in \opn{Ob} \gerbe{G}(U)$
and any $x \in X$, there is an open set $V$ such that $x \in V \subset U$ and
$\gerbe{G}(V)(i, j) \neq \emptyset$. 
\end{enumerate}

A gerbe $\gerbe{G}$ is called {\em trivial} if 
$\opn{Ob} \gerbe{G}(X) \neq \emptyset$.

Let $\mcal{G}$ be a sheaf of groups on $X$. By a {\em left
$\mcal{G}$-torsor} on
$X$ we mean a sheaf of sets $\mcal{S}$, with a left $\mcal{G}$-action, such
that $\mcal{S}$ is locally nonempty (i.e.\ each point $x \in X$ has an open
neighborhood $U$ such that $\mcal{S}(U) \neq \emptyset$), 
and for any $s \in \mcal{S}(U)$ the morphism of sheaves of sets
$\mcal{G}|_U \to \mcal{S}|_U$, $g \mapsto g \cdot s$, is an isomorphism. 
The torsor $\mcal{S}$ is trivial if $\mcal{S}(X) \neq \emptyset$. 

Suppose $\gerbe{G}$ is a gerbe on $X$. Given an open
set $U \subset X$ and $i \in \opn{Ob} \gerbe{G}(U)$, there is a sheaf of
groups $\gerbe{G}(i,i)$ on $U$. If $j \in \opn{Ob} \gerbe{G}(U)$
is some other object, then the sheaf of sets 
$\gerbe{G}(i,j)$ is a 
$\gerbe{G}(j,j)$-$\gerbe{G}(i,i)$-bitorsor. Namely, forgetting the
left action by $\gerbe{G}(j,j)$, the sheaf $\gerbe{G}(i,j)$ is a
right $\gerbe{G}(i,i)$-torsor; and vice versa.

We denote by $\bsym{\cat{Gerbe}}\, X$ the full sub-$2$-category of
$\bsym{\cat{PreStGr}}\, X$ gotten by taking all gerbes, all
$1$-morphisms between gerbes, and all $2$-morphisms between these $1$-morphisms.

Here are two prototypical examples of gerbes.

\begin{exa} \label{exa:15}
Let $\cat{I}$ be the groupoid with one object, say $0$, and with
$\cat{I}(0, 0) := \{ \bsym{1}_0 \}$, the trivial group.  
This groupoid is a terminal object in $\twocat{Cat}$, since any category
$\cat{C}$ admits exactly one functor $\cat{C} \to \cat{I}$. 

Now take a topological space $X$, and define a prestack $\gerbe{I}$ on it by
letting $\gerbe{I}(U) := \cat{I}$ for any open set $U$. 
Then $\gerbe{I}$ is a gerbe. The gerbe $\gerbe{I}$ is a terminal
object in $\bsym{\cat{PreStack}}\, X$. Indeed, given any prestack 
$\gerbe{G}$ on $X$ there is a unique morphism of prestacks
$\gerbe{G} \to \gerbe{I}$. We call $\gerbe{I}$ the {\em terminal gerbe}
(because the word ``trivial'' is over-used in this area).  
\end{exa}

\begin{exa} \label{exa:16}
Let $\mcal{G}$ be a sheaf of groups on $X$. For an open set $U$ let
$\cat{Tors}(\mcal{G}|_U)$ be the set of all left $\mcal{G}|_U$-torsors.
This is a groupoid. Given $V \subset U$ there is a functor
\[ \cat{Tors}(\mcal{G}|_U) \to \cat{Tors}(\mcal{G}|_V) , \]
namely $\mcal{S} \mapsto \mcal{S}|_V$. Thus we obtain a prestack of groupoids
$\twocat{Tors} \mcal{G}$ with
\[ (\twocat{Tors} \mcal{G})(U) := \cat{Tors}(\mcal{G}|_U) . \]
Since torsors are locally trivial it follows that 
$\twocat{Tors} \mcal{G}$ is a gerbe, called the {\em gerbe of
$\mcal{G}$-torsors}.
\end{exa}

\begin{rem}
A prestack of groupoids $\gerbe{G}$ is sometimes called a {\em
category fibered in groupoids} over $\cat{Open} X$. 
More precisely, given $\gerbe{G}$, we can construct a category
$\cat{G}$, together with a functor $\Phi : \cat{G} \to \cat{Open} X$
called the fiber functor. The set of objects of $\cat{G}$ is
\[ \opn{Ob} \cat{G} := 
\coprod_{U \in \cat{Open} X} \opn{Ob} \gerbe{G}(U) . \]
For objects $i \in \opn{Ob} \gerbe{G}(U)$ and
$j \in \opn{Ob} \gerbe{G}(V)$ one defines 
\[ \opn{Hom}_{\cat{G}}(i, j) := 
\opn{Hom}_{\gerbe{G}(U)}(i, j|_U) \]
if $U \subset V$; and 
$\opn{Hom}_{\cat{G}}(i, j) := \emptyset$ otherwise. 
The fiber functor $\Phi : \cat{G} \to \cat{Open} X$ is
$\Phi(i) := U$ for $i \in \opn{Ob} \gerbe{G}(U)$, and
$\Phi(g) := (U \to V)$ for 
$g \in \opn{Hom}_{\cat{G}}(i, j)$ as above.

Conversely, the prestack $\gerbe{G}$ can be recovered from the data
$\Phi : \cat{G} \to \cat{Open} X$.

For stacks of groupoids arising from moduli problems it is often more
natural to use the fibered category approach (cf.\ \cite{LMB}); but
for our applications in \cite{Ye}, the pseudofunctor approach is more
suitable.
\end{rem}

\section{Extensions of Gerbes}
\label{sec.ext}

We begin by taking certain basic notions about groups (such as normal subgroup
and center) and generalizing them to groupoids. 
As a matter of convenience we often refer to a functor 
$F : \cat{G} \to \cat{H}$ 
between groupoids as a morphism. (Indeed this is a $1$-morphism in the 
$2$-category $\twocat{Groupoid}$ of groupoids.)

\begin{dfn} \label{dfn:3}
Let $\cat{G}$ be a groupoid. A {\em normal subgroupoid} of $\cat{G}$ is a
subgroupoid $\cat{N}$ satisfying the following three conditions. 
\begin{enumerate}
\rmitem{i} $\opn{Ob} \cat{N} = \opn{Ob} \cat{G}$.
\rmitem{ii} For every $i, j \in \opn{Ob} \cat{G}$ and 
$g \in \cat{G}(i, j)$ there is equality
\[ \opn{Ad}(g) \bigl( \cat{N}(i, i) \bigr)  = \cat{N}(j, j) . \]
\rmitem{iii} $\cat{N}$ is totally disconnected, i.e.\ 
$\cat{N}(i, j) = \emptyset$ for $i \neq j$.
\end{enumerate}
\end{dfn}

In particular for every $i \in \opn{Ob} \cat{G}$ the
group $\cat{N}(i, i)$ is normal subgroup of the automorphism group 
$\cat{G}(i, i)$. 

Note that a normal subgroupoid $\cat{N}$ is the same as a collection 
$\{ N_i \}_{i \in \opn{Ob} \cat{G}}$ 
of subgroups $N_i \subset \cat{G}(i, i)$, satisfying the obvious variant of
condition (ii). 

The {\em trivial normal subgroupoid} of $\cat{G}$ is the normal subgroupoid
$\cat{N}$ for which all the groups $\cat{N}(i, i)$ are trivial; namely 
$\cat{N}(i, i) = \{ \bsym{1}_i \}$.

Let  $F : \cat{G} \to \cat{H}$ be a morphism of groupoids. 
For $i \in \opn{Ob} \cat{G}$ let
\[ \opn{Ker}(F)(i, i) := \opn{Ker} \bigl( F : \cat{G}(i,i) \to 
\cat{H} ( F(i), F(i) ) \bigr) . \]
This is a normal subgroup of $\cat{G}(i, i)$. 
Moreover, an easy calculation shows that the collection of subgroups 
$\{ \opn{Ker}(F)(i, i) \}_{i \in \opn{Ob} \cat{G}}$ 
is a normal subgroupoid of $\cat{G}$, which we denote by 
$\opn{Ker}(F)$. 

\begin{dfn}
Let $F : \cat{G} \to \cat{H}$ be a morphism of groupoids. We say that 
$F$ is a {\em weak epimorphism} if it satisfies these conditions:
\begin{enumerate}
\rmitem{i} $F$ is essentially surjective objects. Namely for any
$j \in \opn{Ob} \cat{H}$ there exists some $i \in \opn{Ob} \cat{G}$ such that
$\cat{H}(F(i), j) \neq \emptyset$.
\rmitem{ii} $F$ is surjective on sets of morphisms. This means that for any
$i, j \in \opn{Ob} \cat{G}$ the function
\[ F : \cat{G}(i, j) \to \cat{H} \bigl( F(i), F(j) \bigr) \]
is surjective.
\end{enumerate}
\end{dfn}

Observe that if $F : \cat{G} \to \cat{H}$ is a weak epimorphism whose kernel 
$\opn{Ker}(F)$ is the trivial normal subgroupoid of $\cat{G}$, then $F$ is an
equivalence.

\begin{dfn}
By an {\em extension of groupoids} we mean a diagram of morphisms of groupoids
\[ \cat{N} \xar{E} \cat{G} \xar{F} \cat{H} , \]
such that $F$ is a weak epimorphism,  
$\cat{N} = \opn{Ker}(F)$, and $E : \cat{N} \to \cat{G}$ is the inclusion.
\end{dfn}

By analogy with the case of groups we often write
\begin{equation} \label{eqn:20}
1 \to \cat{N} \to \cat{G} \xar{F} \cat{H} \to 1  
\end{equation}
for an extension of groupoids. But this is only a suggestive notation -- we do
not view the symbols ``$1$'' as groupoids. (We could, but then the first $1$
has to be replaced by the trivial normal subgroupoid of $\cat{G}$, and
the second $1$ by the terminal groupoid of Example \ref{exa:15}.)

Extensions of groupoids behave very much like extensions of groups. 
Suppose $\cat{G}$ is a groupoid and $\cat{N} \subset \cat{G}$ is a normal
subgroupoid. Then there is an extension of groupoids (\ref{eqn:20}). 
The groupoid $\cat{H}$ in this extension is unique up to equivalence.
One could choose $\cat{H}$ such that the function
$F : \opn{Ob} \cat{G} \to \opn{Ob} \cat{H}$ is bijective; and that would make 
$\cat{H}$ unique up to isomorphism.

Next suppose 
$F : \cat{G} \to \cat{H}$,
$F' : \cat{G}' \to \cat{H}'$ and 
$D : \cat{G} \to \cat{G}'$ are morphisms of groupoids, such that 
$D \bigl( \opn{Ker}(F) \bigr) \subset \opn{Ker}(F')$. 
Then there is a morphism of groupoids
$E : \cat{H} \to \cat{H}'$, unique up to $2$-isomorphism, such that the
diagram
\[ \UseTips \xymatrix @C=5ex @R=5ex {
\cat{G}
\ar[r]^{F}
\ar[d]_{D}
& 
\cat{H}
\ar[d]^{E}
\\
\cat{G}'
\ar[r]_{F'}
& 
\cat{H}'
}  \]
commutes up to $2$-isomorphism. 

Now consider a groupoid $\cat{G}$. 
For any $i \in \opn{Ob} \cat{G}$ we have the center
$\opn{Z}(\cat{G}(i, i))$ of the automorphism group
$\cat{G}(i, i)$. 
Given any pair of objects
$i, j \in \opn{Ob} \cat{G}$, and any isomorphism
$g \in \cat{G}(i, j)$, we have
\[ \opn{Ad}(g) \bigl( \opn{Z}(\cat{G}(i, i)) \bigr)  = 
\opn{Z}(\cat{G}(j, j)) . \]
Therefore the collection of subgroups 
$\{ \opn{Z}(\cat{G}(i, i)) \}_{i \in \opn{Ob} \cat{G}}$ 
is a normal subgroupoid of $\cat{G}$, 
which we denote by $\opn{Z}(\cat{G})$, and call the {\em center} of $\cat{G}$.

\begin{dfn}
\begin{enumerate}
\item  Let $\cat{G}$ be a groupoid. A {\em central subgroupoid} of
$\cat{G}$ is any normal subgroupoid $\cat{N}$ that is contained in
$\opn{Z}(\cat{G})$.
\item A {\em central extension of groupoids} is an extension of groupoids
(\ref{eqn:20}) such that $\cat{N}$ is a central subgroupoid of $\cat{G}$.
\end{enumerate}
\end{dfn}

Suppose $\cat{G}$ is a nonempty and connected groupoid, and 
$\cat{N}$ is a central subgroupoid of $\cat{G}$. 
Take any $i, j \in \opn{Ob} \cat{G}$ and
$g, g' \in \cat{G}(i, j)$. Then the group isomorphisms
\[ \opn{Ad}(g) , \opn{Ad}(g') : \cat{N}(i, i) \to \cat{N}(j, j) \]
are equal. In this way we can canonically identify the abelian groups 
$\cat{N}(i, i)$, for $i \in \opn{Ob} \cat{G}$, and view them as a single abelian
group.

When we are given a central extension of groupoids (\ref{eqn:20})
in which $\cat{G}$ is nonempty and connected, we can replace the 
central subgroupoid $\cat{N}$ by a single abelian group $N$ as explained above,
and the extension becomes
\[ 1 \to N \to \cat{G} \xar{F} \cat{H} \to 1 . \]

So far for the discrete situation; now we geometrize. 
Let $X$ be a topological space. 
Suppose $\gerbe{G}$ is a gerbe on $X$. 
By a local object $i$ of $\gerbe{G}$ we mean an object
$i \in \opn{Ob} \gerbe{G}(U)$ for some open set $U \subset X$. 
If $i, j$ are two local objects, defined on open sets $U, V$ respectively, then
by $\gerbe{G}(i,j)$ we mean the corresponding sheaf of isomorphisms on 
$U \cap V$. By a local isomorphism $g : i \iso j$
we mean an isomorphism 
$g \in \gerbe{G}(i,j)(W)$ for some open set 
$W \subset U \cap V$. 
Such $g$ gives rise to an isomorphism of sheaves of groups
\[ \opn{Ad}(g) : \gerbe{G}(i,i)|_W \iso \gerbe{G}(j,j)|_W . \]

\begin{dfn} \label{dfn.9}
Let $\gerbe{G}$ be a gerbe on $X$. A {\em normal subprestack of groupoids}
of $\gerbe{G}$ is a subprestack $\gerbe{N}$ of $\gerbe{G}$
with these two properties:
\begin{itemize}
\rmitem{i} For every open set $U$ the category 
$\gerbe{N}(U)$ is a normal subgroupoid of $\gerbe{G}(U)$
(see Definition \ref{dfn:3}). In particular
$\opn{Ob} \gerbe{N}(U) = \opn{Ob} \gerbe{G}(U)$, and
$\gerbe{N}(U)$ is totally disconnected. 
\rmitem{ii} For every local object $i$ of $\gerbe{N}$ the presheaf
$\gerbe{N}(i, i)$ is a sheaf. 
\end{itemize}

Since the full name is too long, we simply call such $\gerbe{N}$ a 
 {\em normal subgroupoid} of $\gerbe{G}$.
\end{dfn}

Here is what the definition amounts to. For every local object $i$ of 
$\gerbe{G}$ there is a subsheaf of groups 
$\gerbe{N}(i, i) \subset \gerbe{G}(i, i)$. The condition is that for any
local objects $i$ and $j$, and any local isomorphism 
$g : i \to j$, one has
\[ \opn{Ad}(g) \bigl( \gerbe{N}(i, i) \bigr) = \gerbe{N}(j, j) . \]

Warning: a normal subgroupoid of a gerbe is usually not a gerbe, nor even a
stack. 

\begin{prop} \label{prop:13}
Given a morphism of gerbes $F : \gerbe{G} \to \gerbe{H}$, there is a unique 
normal subgroupoid $\gerbe{N}$ of $\gerbe{G}$ such that 
\[ \gerbe{N}(U) = 
\opn{Ker} \bigl( F : \gerbe{G}(U) \to \gerbe{H}(U) \bigr) \]
for every open set $U$. 
\end{prop}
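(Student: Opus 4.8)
The plan is to define $\gerbe{N}$ objectwise using the kernel construction for morphisms of groupoids recalled just above, and then verify that this collection assembles into a genuine normal subprestack of groupoids in the sense of Definition \ref{dfn.9}. First I would set, for each open set $U$, the category $\gerbe{N}(U) := \opn{Ker}\bigl(F(U) : \gerbe{G}(U) \to \gerbe{H}(U)\bigr)$, which by the discussion following Definition \ref{dfn:3} is a normal subgroupoid of $\gerbe{G}(U)$: it has the same objects as $\gerbe{G}(U)$, it is totally disconnected, and each $\gerbe{N}(U)(i,i)$ is the kernel of the group homomorphism $\gerbe{G}(U)(i,i) \to \gerbe{H}(U)\bigl(F(i),F(i)\bigr)$. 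This immediately gives condition (i) of Definition \ref{dfn.9}.

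Next I would check that $\gerbe{N}$ is a subprestack, i.e.\ that it is stable under the restriction functors and that its composition isomorphisms $\gamma^{\gerbe{N}}$ agree with those of $\gerbe{G}$. Since $F$ is a morphism of prestacks, it comes equipped with the coherence isomorphisms $\psi^F_{U_1/U_0}$ relating $F(U_1) \circ \opn{rest}^{\gerbe{G}}_{U_1/U_0}$ with $\opn{rest}^{\gerbe{H}}_{U_1/U_0} \circ F(U_0)$. Using these, I would argue that if a local morphism $g \in \gerbe{G}(U_0)(i,i)$ lies in the kernel of $F(U_0)$, then its restriction $g|_{U_1}$ lies in the kernel of $F(U_1)$; the point is that $F(U_1)(g|_{U_1})$ is identified, via $\psi^F$ and the naturality of the restriction functors, with a restriction of $F(U_0)(g)$, hence equals the identity. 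Thus $\gerbe{N}$ is closed under restriction, and since $\gerbe{N}$ inherits its restriction functors and composition isomorphisms directly from $\gerbe{G}$, it is a subprestack by definition.

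For condition (ii) of Definition \ref{dfn.9} I would verify that each presheaf $\gerbe{N}(i,i)$ is a sheaf. Here $\gerbe{N}(i,i)$ is the kernel presheaf of the map of sheaves $\gerbe{G}(i,i) \to \gerbe{H}(F(i),F(i))$; both of these are sheaves because $\gerbe{G}$ and $\gerbe{H}$ are stacks (descent for morphisms), and a kernel of a morphism of sheaves of groups is again a sheaf. This also confirms that $\gerbe{N}$ is a normal subgroupoid of $\gerbe{G}$ in the abbreviated sense of the definition. To finish, I would address the \emph{normality} condition spelled out after Definition \ref{dfn.9}: for any local isomorphism $g : i \to j$ one needs $\opn{Ad}(g)\bigl(\gerbe{N}(i,i)\bigr) = \gerbe{N}(j,j)$. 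This follows because $F$ respects $\opn{Ad}$ up to the coherence data, so $\opn{Ad}(g)$ carries $\opn{Ker}\bigl(F : \gerbe{G}(i,i) \to \cdots\bigr)$ isomorphically onto $\opn{Ker}\bigl(F : \gerbe{G}(j,j) \to \cdots\bigr)$.

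Finally, \emph{uniqueness} is essentially immediate: any normal subgroupoid $\gerbe{N}'$ satisfying the stated condition has $\gerbe{N}'(U) = \gerbe{N}(U)$ as subcategories of $\gerbe{G}(U)$ for every $U$, and its restriction functors and composition isomorphisms are forced to coincide with those of $\gerbe{G}$ by the definition of subprestack; hence $\gerbe{N}' = \gerbe{N}$. I expect the main obstacle to be the bookkeeping in the subprestack verification: one must carefully track the coherence isomorphisms $\psi^F$ and $\gamma^{\gerbe{G}}$ to confirm that the kernel is genuinely preserved by restriction, rather than merely preserved up to a $2$-isomorphism. Everything else is a routine translation of the discrete kernel construction into the geometric setting.
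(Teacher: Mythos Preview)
Your proposal is correct and follows essentially the same approach as the paper's proof: define $\gerbe{N}(U)$ as the kernel of $F(U)$, observe that this gives a normal subgroupoid at each level, and verify that $\gerbe{N}(i,i)$ is a sheaf as a kernel of a map of sheaves. The paper's proof is much terser (three sentences), leaving the subprestack verification and the uniqueness implicit, whereas you spell out the role of the coherence isomorphisms $\psi^F$ and the restriction compatibility explicitly; but the underlying argument is identical.
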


\begin{proof}
The formula defines a subprestack of groupoids $\gerbe{N}$ of $\gerbe{G}$.
We know that the groupoid $\gerbe{N}(U)$ is normal in $\gerbe{G}(U)$. 
And for any local object $i$ of $\gerbe{G}$ we have 
\[  \gerbe{N}(i, i) = \opn{Ker} \bigl( F : \gerbe{G}(i,i) \to 
\gerbe{H} \bigl( F(i), F(i) \bigr) \bigr)  \]
as presheaves, so $\gerbe{N}(i, i)$ a sheaf. 
\end{proof}

\begin{dfn}
The normal subgroupoid $\gerbe{N}$ in the proposition above is called the
{\em kernel} of $F$, and it is denoted by $\opn{Ker}(F)$
\end{dfn}

\begin{dfn}
Let $F : \gerbe{G} \to \gerbe{H}$ be a morphism of gerbes. We say that 
$F$ is a {\em weak epimorphism} if it satisfies these conditions:
\begin{enumerate}
\rmitem{i} $F$ is {\em locally essentially surjective on objects}. 
Recall that this says that for any open set $U \subset X$,
object $j \in \opn{Ob} \gerbe{H}(U)$ and point $x \in U$, there is an open
set $V$ with $x \in V \subset U$, an object 
$i \in \opn{Ob} \gerbe{G}(V)$, and an isomorphism
$h : F(i) \iso j$ in $\gerbe{H}(V)$. 
\rmitem{ii} $F$ is {\em surjective on isomorphism sheaves}. This says that for
any $i, j \in \opn{Ob} \gerbe{G}(U)$
the map of sheaves of sets
\[ F : \gerbe{G}(i, j) \to \gerbe{H}
\bigl( F(i), F(j) \bigr) \]
is surjective. 
\end{enumerate}
\end{dfn}

Note that if $F: \gerbe{G} \to \gerbe{H}$ is a weak epimorphism such that
$\opn{Ker}(F)$ is the trivial normal subgroupoid of $\gerbe{G}$, then 
$F$ is a weak equivalence, and hence it is an equivalence.

\begin{dfn} \label{dfn:1} 
An {\em extension of gerbes} is a diagram
\[  \gerbe{N} \xar{E} \gerbe{G} \xar{F} \gerbe{H}  \]
of morphisms in $\twocat{PreStGr} X$, such that $\gerbe{G}$ and $\gerbe{H}$
are gerbes, $F$ is a weak epimorphism,  
$\gerbe{N} = \opn{Ker}(F)$, and $E : \gerbe{N} \to \gerbe{G}$ is the inclusion.
\end{dfn}

We often use the notation of ``exact sequence''
\begin{equation} \label{eqn:21}
1 \to  \gerbe{N} \to \gerbe{G} \xar{F} \gerbe{H} \to 1  
\end{equation}
for an extension of gerbes. 

\begin{dfn} \label{dfn.5}
A {\em morphism of extensions of gerbes} is a diagram
\[ \UseTips \xymatrix @C=5ex @R=5ex {
\gerbe{N}
\ar[r]
\ar[d]
&
\gerbe{G}
\ar[r]^{F}
\ar[d]_{D}
& 
\gerbe{H}
\ar[d]_{E}
\\
\gerbe{N}'
\ar[r]
&
\gerbe{G}'
\ar[r]_{F'}
& 
\gerbe{H}'
}  \]
of morphisms in $\twocat{PreStGr} X$, 
where the rows are extensions of gerbes,
the  square on the right is commutative up to $2$-isomorphism,
and the square on the left is commutative.
We denote this morphism of extensions by $(D, E)$. 
\end{dfn}

\begin{thm} \label{thm:1} 
Let $\gerbe{G}$ be a gerbe on $X$, and let
$\gerbe{N}$ be a normal subgroupoid of $\gerbe{G}$. Then there exists
a gerbe $\gerbe{G} / \gerbe{N}$, 
and a morphism of gerbes
$F : \gerbe{G} \to \gerbe{G} / \gerbe{N}$,
with the following properties:
\begin{enumerate}
\rmitem{i} The diagram
\[ 1 \to  \gerbe{N} \to \gerbe{G} \xar{F}
\gerbe{G} / \gerbe{N} \to 1 \]
is an extension of gerbes.
\rmitem{ii} Suppose 
\[ 1 \to  \gerbe{N}' \to \gerbe{G}' \xar{F'}
\gerbe{H}' \to 1  \]
is an extension of gerbes, and 
$D : \gerbe{G} \to \gerbe{G}'$
is a morphism of gerbes, such that 
$D(\gerbe{N}) \subset \gerbe{N}'$. Then there is a 
morphism gerbes 
$E : \gerbe{G} / \gerbe{N} \to \gerbe{H}'$,
unique up to $2$-isomorphism, such that the diagram
\[ \UseTips \xymatrix @C=5ex @R=5ex {
1 
\ar[r]
&
\gerbe{N}
\ar[r]
\ar[d]
&
\gerbe{G}
\ar[r]^{F}
\ar[d]_{D}
& 
\gerbe{G} / \gerbe{N} 
\ar[r]
\ar[d]_{E}
& 
1
\\
1 
\ar[r]
&
\gerbe{N}'
\ar[r]
&
\gerbe{G}'
\ar[r]_{F'}
& 
\gerbe{H}'
\ar[r]
& 
1
}  \]
is a morphism of extensions.
\rmitem{iii} In the situation of property \tup{(ii)}, assume the morphism 
$D : \gerbe{G} \to \gerbe{G}'$ is an equivalence, and the sheaf
homomorphisms
\[ D : \gerbe{N}(i, i) \to \gerbe{N}' \bigl( F(i), F(i) \bigr) \]
are isomorphisms for all local objects $i$ of $\gerbe{G}$. Then $E$ is also an
equivalence.
\end{enumerate}
\end{thm}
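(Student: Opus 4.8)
The plan is to construct the quotient gerbe $\gerbe{G}/\gerbe{N}$ explicitly as a stackification, and then verify the universal property (ii) and the equivalence criterion (iii) by reducing everything to local statements about groupoids, where the corresponding discrete facts (recalled in the paragraphs preceding Definition~\ref{dfn:1}) are already available. First I would build a \emph{prestack} $\gerbe{Q}$ on $X$ by setting $\opn{Ob} \gerbe{Q}(U) := \opn{Ob} \gerbe{G}(U)$ and declaring the morphism sheaves to be the quotient presheaves $\gerbe{Q}(i,j) := \gerbe{G}(i,j) / \gerbe{N}(i,i)$, where the (central) action of the abelian sheaf $\gerbe{N}(i,i) = \gerbe{N}(j,j)$ makes this a well-defined presheaf of torsors; the restriction functors and composition isomorphisms $\gamma$ are inherited from $\gerbe{G}$, and one checks condition~(\ref{eqn:6}) descends. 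Then $\gerbe{G}/\gerbe{N}$ is defined to be the stackification $\til{\gerbe{Q}}$, and $F$ is the composite of the evident projection $\gerbe{G} \to \gerbe{Q}$ with the stackification morphism $\gerbe{Q} \to \til{\gerbe{Q}}$.

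For part~(i) I would check that $F$ is a weak epimorphism and that $\opn{Ker}(F) = \gerbe{N}$. Local essential surjectivity on objects is immediate because $F$ is the identity on objects at the prestack level and stackification only adds objects locally isomorphic to existing ones; surjectivity on isomorphism sheaves holds because $\gerbe{G}(i,j) \surj \gerbe{Q}(i,j)$ is surjective by construction and stackification sheafifies. That $\gerbe{G}/\gerbe{N}$ is a gerbe (locally nonempty and locally connected) follows from the same properties of $\gerbe{G}$ transported through $F$. For the kernel computation, I would use the fact that on morphism sheaves $F$ induces $\gerbe{G}(i,i) \to \gerbe{G}(i,i)/\gerbe{N}(i,i)$, whose kernel sheaf is exactly $\gerbe{N}(i,i)$; here condition~(ii) of Definition~\ref{dfn.9}, that $\gerbe{N}(i,i)$ is already a sheaf, is what guarantees the kernel is $\gerbe{N}$ itself and not merely its sheafification.

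For the universal property~(ii), given $D : \gerbe{G} \to \gerbe{G}'$ with $D(\gerbe{N}) \subset \gerbe{N}' = \opn{Ker}(F')$, I would first produce a morphism of prestacks $\gerbe{Q} \to \gerbe{H}'$. On objects it is $i \mapsto F'(D(i))$; on morphism sheaves, the hypothesis $D(\gerbe{N}) \subset \gerbe{N}'$ means $F' \circ D$ kills $\gerbe{N}(i,i)$, so it factors through the quotient $\gerbe{Q}(i,j) = \gerbe{G}(i,j)/\gerbe{N}(i,i)$. The coherence data ($\psi$ isomorphisms and their cocycle condition) are inherited from those of $D$ and $F'$. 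Since $\gerbe{H}'$ is a stack, the universal property of stackification recalled in Section~\ref{sec:prest} yields $E : \til{\gerbe{Q}} = \gerbe{G}/\gerbe{N} \to \gerbe{H}'$, unique up to $2$-isomorphism, with $F' \circ D \twocong E \circ F$; commutativity of the left square (on the nose, as required by Definition~\ref{dfn.5}) and $2$-commutativity of the right square then follow from the construction. The uniqueness clause transfers directly from the uniqueness in the stackification property.

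Part~(iii) is where I expect the main obstacle, since I must upgrade a \emph{local} isomorphism statement to a global equivalence. The strategy is to use the geometric characterization recorded earlier: a morphism of stacks is an equivalence iff it is a weak equivalence, i.e.\ locally essentially surjective on objects and fully faithful on morphism sheaves. Full faithfulness of $E$ is the heart of the matter: for local objects $i,j$ one compares the morphism sheaves via the commutative diagram relating $\gerbe{G}(i,j)$, $\gerbe{G}'(Di,Dj)$, and their quotients by $\gerbe{N}$, $\gerbe{N}'$ respectively. Because $D$ is fully faithful (being an equivalence) and the induced maps $\gerbe{N}(i,i) \to \gerbe{N}'(F(i),F(i))$ are sheaf isomorphisms by hypothesis, a short five-lemma-type diagram chase on the central torsor quotients shows $E$ induces isomorphisms on morphism sheaves. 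Local essential surjectivity of $E$ on objects follows from that of $D$ together with the surjectivity properties of $F$ and $F'$ and the local nonemptiness of the gerbes. The delicate point, and the one requiring care, is that these comparisons take place only after stackification, so I would either argue at the level of the underlying prestacks and invoke that stackification preserves weak equivalences, or verify the weak-equivalence conditions directly on $E$ using that every local object and local morphism of $\gerbe{G}/\gerbe{N}$ comes, locally, from $\gerbe{Q}$.
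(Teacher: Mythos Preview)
Your approach is essentially the paper's: build an intermediate prestack with the same objects and quotient morphism (pre)sheaves, stackify, then verify (i)--(iii) by reducing to weak-equivalence criteria. The paper calls this prestack $\bar{\gerbe{G}}$ and your $\gerbe{Q}$ plays the same role; the only structural difference is that the paper sheafifies the morphism presheaves first (this is the content of its Lemma~\ref{lem.3}) whereas you leave them as presheaves and let stackification absorb that step, which is fine.

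There is, however, a genuine slip in your construction of $\gerbe{Q}$. You write that ``the (central) action of the abelian sheaf $\gerbe{N}(i,i) = \gerbe{N}(j,j)$'' makes the quotient well-defined. But in this theorem $\gerbe{N}$ is only assumed \emph{normal} (Definition~\ref{dfn.9}), not central; the sheaves $\gerbe{N}(i,i)$ need not be abelian, and they are certainly not equal for distinct $i,j$---they are only conjugate via elements of $\gerbe{G}(i,j)$. What actually makes the quotient $\gerbe{G}(i,j)/\gerbe{N}(i,i)$ into a bitorsor for the quotient groups, and what makes composition in $\gerbe{Q}(U)$ well-defined, is precisely the normality condition $\opn{Ad}(g)\bigl(\gerbe{N}(i,i)\bigr)=\gerbe{N}(j,j)$: replacing $h\in\gerbe{G}(j,k)$ by $h\circ m$ with $m\in\gerbe{N}(j,j)$ changes $h\circ g$ to $h\circ g\circ(g^{-1}mg)$, and normality is exactly what puts $g^{-1}mg$ in $\gerbe{N}(i,i)$. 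The paper isolates this verification as Lemma~\ref{lem.3}; you should replace your centrality claim with this normality argument. Once that is fixed, the rest of your outline---including the five-lemma style comparison for (iii)---matches the paper's steps (c)--(e).
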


Before giving the proof we need some preliminary work. 
Let $U \subset X$ be an open set, and let
$i, j \in \opn{Ob} \gerbe{G}(U)$. The sheaf of sets 
$\gerbe{G}(i, j)$ is a right $\gerbe{G}(i, i)$-torsor on $U$, and
hence it has a right action by the sheaf of groups
$\gerbe{N}(i, i)$. Let
$\bar{\gerbe{G}}(i, j)$ be the sheaf of sets on $U$ associated to the
presheaf
\[ V \mapsto \gerbe{G}(V)(i, j) \, / \, \gerbe{N}(V)(i, i) . \]
There is a surjective sheaf morphism
$\gerbe{G}(i, j) \to \bar{\gerbe{G}}(i, j)$.
If $i = j$ we get a sheaf of groups
$\bar{\gerbe{G}}(i, i)$.

\begin{lem} \label{lem.3} 
There is a unique structure of 
$\bar{\gerbe{G}}(j, j)$-$\bar{\gerbe{G}}(i, i)$-bitorsor
on $\bar{\gerbe{G}}(i, j)$, such that the surjection 
$\gerbe{G}(i, j) \to \bar{\gerbe{G}}(i, j)$ is 
$\gerbe{G}(j, j) \times \gerbe{G}(i, i)$ -equivariant.
\end{lem}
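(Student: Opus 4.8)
The statement asks to transfer the bitorsor structure on $\gerbe{G}(i,j)$ down to the quotient sheaf $\bar{\gerbe{G}}(i,j)$, compatibly with the surjection. The plan is to first define the candidate actions on the presheaf level, then check they descend to the quotient by the normal subgroups $\gerbe{N}$, then sheafify, and finally verify the bitorsor axioms and uniqueness. Since everything is local and the sheafification is formal, the real content is that the left and right actions are well-defined modulo $\gerbe{N}$.

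First I would work on the level of the defining presheaves. On an open set $V \subset U$, we have the set $\gerbe{G}(V)(i,j)$ with its left $\gerbe{G}(V)(j,j)$-action (by postcomposition) and right $\gerbe{G}(V)(i,i)$-action (by precomposition); these commute and each is simply transitive once $\gerbe{G}(V)(i,j) \neq \emptyset$, giving the bitorsor structure of the previous paragraph. The quotient set is $\gerbe{G}(V)(i,j) / \gerbe{N}(V)(i,i)$. To put a right $\bar{\gerbe{G}}(V)(i,i)$-action on it, given $g \in \gerbe{G}(V)(i,j)$ and $a \in \gerbe{G}(V)(i,i)$, I would set $\bar{g} \cdot \bar{a} := \overline{g \circ a}$; this is well-defined because $\gerbe{N}(V)(i,i)$ is a (normal) subgroup, so changing $a$ by an element $n \in \gerbe{N}(V)(i,i)$ changes $g \circ a$ by $g \circ n$, which lies in the same $\gerbe{N}(V)(i,i)$-coset of $g$ on the left — here I must be careful and instead use that the right $\gerbe{N}(i,i)$-orbits are what we quotient by, so I rewrite $g \circ a \circ (\text{correction})$ using that $\opn{Ad}$ identifies $\gerbe{N}(i,i)$ with $\gerbe{N}(j,j)$. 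For the left action, given $b \in \gerbe{G}(V)(j,j)$, I set $\bar{b} \cdot \bar{g} := \overline{b \circ g}$, which is well-defined modulo $\gerbe{N}(V)(j,j)$ on the target precisely because $b \circ (g \circ n) = (b \circ g) \circ n$ and $n \in \gerbe{N}(i,i)$; this lands in the correct $\gerbe{N}(i,i)$-coset.

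The main obstacle, and the step deserving genuine care, is checking that these actions pass to the \emph{quotient groups} $\bar{\gerbe{G}}(i,i)$ and $\bar{\gerbe{G}}(j,j)$ — not merely that $\gerbe{G}(i,j)/\gerbe{N}(i,i)$ carries actions of the unquotiented groups. For the right action, I must show that replacing the acting element $a$ by $a \circ n$ with $n \in \gerbe{N}(V)(i,i)$ does not change $\bar{g}\cdot\bar{a}$: indeed $g \circ a \circ n$ and $g \circ a$ lie in the same right $\gerbe{N}(V)(i,i)$-coset, so they agree in the quotient. For the left action the analogous computation uses the identity $\opn{Ad}(g)\bigl(\gerbe{N}(i,i)\bigr) = \gerbe{N}(j,j)$ from Definition \ref{dfn.9}: replacing $b$ by $b \circ m$ with $m \in \gerbe{N}(V)(j,j)$, I write $b \circ m \circ g = b \circ g \circ (g^{-1} \circ m \circ g) = b \circ g \circ \opn{Ad}(g^{-1})(m)$, and $\opn{Ad}(g^{-1})(m) \in \gerbe{N}(V)(i,i)$, so $\overline{b \circ m \circ g} = \overline{b \circ g}$ in the quotient. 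This is exactly the normality hypothesis doing its work.

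Having established compatible, commuting actions of the presheaf quotients, I would sheafify: the actions and the equivariant surjection are morphisms of presheaves, hence induce morphisms of the associated sheaves, yielding actions of $\bar{\gerbe{G}}(j,j)$ and $\bar{\gerbe{G}}(i,i)$ on $\bar{\gerbe{G}}(i,j)$ and the asserted equivariance of $\gerbe{G}(i,j) \to \bar{\gerbe{G}}(i,j)$. The bitorsor axioms (that each action is simply transitive wherever sections exist) can be checked locally on stalks, where they reduce to the statement that a quotient of a torsor by a normal subgroup of the structure group is a torsor under the quotient group — a routine verification. Finally, uniqueness is immediate: the equivariance condition forces $\bar{b} \cdot \bar{g} = \overline{b \circ g}$ and $\bar{g}\cdot\bar{a} = \overline{g \circ a}$ on the generating presheaf, and since $\gerbe{G}(i,j) \to \bar{\gerbe{G}}(i,j)$ is surjective (locally), these formulas determine the actions uniquely on the sheaf level.
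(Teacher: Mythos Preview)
Your argument is correct and complete in its essentials, though the exposition wobbles in the first pass at well-definedness (the ``here I must be careful'' aside); the second paragraph nails it cleanly using normality and the identity $\opn{Ad}(g)\bigl(\gerbe{N}(i,i)\bigr) = \gerbe{N}(j,j)$.

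Your route differs from the paper's. The paper argues uniqueness first (immediate), then for existence works locally: it picks an open $V$ on which the torsor $\gerbe{G}(i,j)$ is trivialized by a section $g$, uses the isomorphisms $f \mapsto g \circ f$ and $\opn{Ad}(g)$ to reduce everything to the group case $\gerbe{G}(i,i)|_V$, and reads off the bitorsor structure on the quotient from there. You instead write down global presheaf formulas $\bar g \cdot \bar a := \overline{g \circ a}$ and $\bar b \cdot \bar g := \overline{b \circ g}$ directly, verify well-definedness by explicit coset manipulations, and only invoke locality at the end to check simple transitivity. Your approach has the virtue of making the equivariance of the quotient map tautological and not requiring a choice of trivialization to \emph{define} the actions; the paper's approach makes the torsor property transparent from the outset, at the cost of a locality reduction up front. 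Both hinge on the same fact from Definition~\ref{dfn.9}, namely that $\opn{Ad}$ carries $\gerbe{N}(i,i)$ to $\gerbe{N}(j,j)$.
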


\begin{proof}
Uniqueness is clear. For existence, we have to exhibit a suitable action of the
sheaf of groups 
$\bar{\gerbe{G}}(j, j) \times \bar{\gerbe{G}}(i, i)$
on the sheaf of sets $\bar{\gerbe{G}}(i, j)$. Because of uniqueness, this
is a local question. 

Choose an open set $V \subset X$ that trivializes the bitorsor 
$\gerbe{G}(i, j)$; namely there is some
$g \in \gerbe{G}(i, j)(V)$. 
Then the left action of
$\gerbe{G}(j, j)|_{V}$ on $\gerbe{G}(i, j)|_{V}$
coincides with the right action of $\gerbe{G}(i, i)|_{V}$, via 
the isomorphism of sheaves of groups
\[ \opn{Ad}(g) : \gerbe{G}(i, i)|_{V} \iso 
\gerbe{G}(j, j)|_{V} . \]
Also we have a torsor isomorphism
\[ \gerbe{G}(i, i)|_{V} \iso \gerbe{G}(i, j)|_{V}, \
f \mapsto g \circ f . \]

Let $\bar{g} \in \bar{\gerbe{G}}(i, j)(V)$ be the image of $g$. 
We then have an isomorphism of sheaves of right
$\bar{\gerbe{G}}(i, i)|_V$-sets 
\[ \bar{\gerbe{G}}(i, i)|_{V} \iso \bar{\gerbe{G}}(i, j)|_{V}, \
\bar{f} \mapsto \bar{g} \circ \bar{f} . \]
It follows that $\bar{\gerbe{G}}(i, j)|_{V}$ is a right 
$\bar{\gerbe{G}}(i, i)|_V$-torsor.
On the other hand, the isomorphism $\opn{Ad}(g)$ induces an isomorphism
of sheaves of groups
\[ \phi : \bar{\gerbe{G}}(i, i)_V \iso
\bar{\gerbe{G}}(j, j)_V . \]
We conclude that 
$\bar{\gerbe{G}}(i, j)|_{V}$ is a 
$\bar{\gerbe{G}}(j, j)|_V$-$\bar{\gerbe{G}}(i, i)|_V$-bitorsor.
And for this bitorsor structure, the isomorphism of sheaves of groups is
$\phi = \opn{Ad}(\bar{g})$. An easy calculation shows that the surjection
$\gerbe{G}(i, j)|_V \to \bar{\gerbe{G}}(i, j)|_V$ is 
$\gerbe{G}(j, j)|_V \times \gerbe{G}(i, i)|_V$ -equivariant.
\end{proof}

\begin{proof}[Proof of the theorem]
The proof is divided into several steps. 

\medskip \noindent
(a) Define a prestack of groupoids $\bar{\gerbe{G}}$,
and a morphism $\gerbe{G} \to \bar{\gerbe{G}}$,
as follows.
For an open set $U \subset X$ the object set is
$\opn{Ob} \bar{\gerbe{G}}(U) := \opn{Ob} \gerbe{G}(U)$.
For a pair of objects $i, j \in \opn{Ob} \bar{\gerbe{G}}(U)$
let $\bar{\gerbe{G}}(i, j)$ be the sheaf of sets from Lemma \ref{lem.3},
and define 
$\bar{\gerbe{G}}(U)(i, j) := \Gamma(U, \bar{\gerbe{G}}(i, j))$.
Next let $\gerbe{G} / \gerbe{N}$ be the stack associated to 
$\bar{\gerbe{G}}$. So $\gerbe{G} / \gerbe{N}$ is a gerbe,
and there is a weak equivalence of prestacks
$\bar{\gerbe{G}} \to \gerbe{G} / \gerbe{N}$.
It is important to note that even though 
$\gerbe{G} / \gerbe{N}$ may have more local objects than
$\bar{\gerbe{G}}$, the isomorphism sheaves (for local objects of 
$\bar{\gerbe{G}}$) are unchanged. 

\medskip \noindent
(b) The morphism of gerbes 
$F : \gerbe{G} \to \gerbe{G} / \gerbe{N}$
we get from step (a) is a weak epimorphism, and its kernel in
$\gerbe{N}$. This proves property (i). 

\medskip \noindent
(c) In this step we prove the existence part of property (ii). 
Let us define a morphism of prestacks
$\bar{D} : \bar{\gerbe{G}} \to \gerbe{H}'$ as follows.
On objects $\bar{D}$ is just $F' \circ D$. 
And on isomorphisms, for local objects $i, j$ of 
$\bar{\gerbe{G}}$, we define
\[ \bar{D} : \bar{\gerbe{G}}(i, j) \to 
\gerbe{H}' \bigl( D(i), D(j) \bigr) \]
to be the unique $\gerbe{G}(i, i)$-equivariant sheaf morphism making
the diagram
\[ \UseTips \xymatrix @C=8ex @R=5ex {
\gerbe{G}(i, j)
\ar[r]
\ar[d]_{D}
&
\bar{\gerbe{G}}(i, j)
\ar[d]_{\bar{D}}
\\
\gerbe{G}' \bigl( D(i), D(j) \bigr)
\ar[r]^(0.5){F'}
&
\gerbe{H}' \bigl( \bar{D}(i), \bar{D}(j) \bigr)
}  \]
commute. 
Due to the universal property of stackification, $\bar{D}$ induces a
morphism of gerbes 
$E : \gerbe{G} / \gerbe{N} \to \gerbe{H}'$; and then 
$(D, E)$ is a morphism of extensions. 

\medskip \noindent
(d) Now we will prove that the morphism $E$ from step (c) is unique up to
$2$-isomorphism. Suppose 
$E' : \gerbe{G} / \gerbe{N} \to \gerbe{H}'$
is some other morphism such that $(D, E')$ is a morphism of extensions. 
By composing the canonical morphism 
$\bar{\gerbe{G}} \to \gerbe{G} / \gerbe{N}$ with $E'$, we
obtain a morphism
$\bar{D}' : \bar{\gerbe{G}} \to \gerbe{H}'$.
We are going to construct a $2$-isomorphism 
$\bar{\eta} : \bar{D} \twoiso \bar{D}'$.

For a local object 
$i \in \opn{Ob} \gerbe{G}(U) = \opn{Ob} \bar{\gerbe{G}}(U)$
let
$j := \bar{D}(i) \in \opn{Ob} \gerbe{H}'(U)$
and
$j' := \bar{D}'(i) \in \opn{Ob} \gerbe{H}'(U)$.
So $j = (F' \circ D)(i)$ and $j' = (E' \circ F)(i)$.
Take any $2$-isomorphism $\eta : E' \circ F \twoiso F' \circ D$.
Then $\eta$ induces a $2$-isomorphism
$\bar{\eta} : \bar{D} \twoiso \bar{D}'$,
which coincides with $\eta$ on objects of $\bar{\gerbe{G}}$,
and is the reduction of $\eta$ modulo 
$\gerbe{N}$ on isomorphisms in $\bar{\gerbe{G}}$.

Because $\gerbe{G} / \gerbe{N}$ is the stackification of 
$\bar{\gerbe{G}}$, and $E, E'$ are the stackifications of 
$\bar{D}, \bar{D}'$ respectively, $\bar{\eta}$ induces a $2$-isomorphism 
$E \twoiso E'$. 

\medskip \noindent
(e) Finally we shall prove property (iii). 
The morphism $\bar{D} : \bar{\gerbe{G}} \to \gerbe{H}'$
is locally surjective on objects. This is because $\bar{\gerbe{G}}$
and $\gerbe{G}$ have the same local objects; 
$\gerbe{G} \to \gerbe{G}'$ is locally bijective on objects; and
$\gerbe{G}' \to \gerbe{H}'$ is 
locally surjective on objects.

By construction, for any pair of local objects
$i, j$ of $\bar{\gerbe{G}}$ we have
\[ \bar{\gerbe{G}}(i, j) = \gerbe{G}(i, j) / \gerbe{N}(i, i) \]
as sheaves of sets. On the other hand
\[ D : \gerbe{N}(i, i) \to
\gerbe{N}' \bigl( D(i), D(i) \bigr) \]
is an isomorphism of sheaves of groups, and 
\[ D : \gerbe{G}(i, j) \to 
\gerbe{G}' \bigl( D(i), D(j) \bigr) \]
is an isomorphism of torsors. 
Since 
\[ \gerbe{H}' \bigl( E(i), E(j) \bigr) \cong
\gerbe{G}' \bigl( D(i), D(j) \bigr) / \gerbe{N}' \bigl( D(i), D(i) \bigr) \]
we see that $\bar{D} : \bar{\gerbe{G}} \to \gerbe{H}'$
is a weak equivalence. Therefore 
$E : \gerbe{G} / \gerbe{N} \to \gerbe{H}'$
is an equivalence.
\end{proof}

\begin{cor}
Suppose we are given extensions of gerbes
\[ 1 \to  \gerbe{N} \to \gerbe{G} \xar{F} \gerbe{H} \to 1 
\]
and
\[ 1 \to  \gerbe{N}' \to \gerbe{G}' \xar{F'} \gerbe{H}' \to 1
, \]
and a morphism of gerbes
$D : \gerbe{G} \to \gerbe{G}'$,
such that $D(\gerbe{N}) \subset \gerbe{N}'$. Then there is a 
morphism of gerbes $E : \gerbe{H} \to \gerbe{H}'$, 
unique up to $2$-isomorphism, such that the diagram
\[ \UseTips \xymatrix @C=5ex @R=5ex {
1 
\ar[r]
&
\gerbe{N}
\ar[r]
\ar[d]
&
\gerbe{G}
\ar[r]^{F}
\ar[d]_{D}
& 
\gerbe{H} 
\ar[r]
\ar[d]_{E}
& 
1
\\
1 
\ar[r]
&
\gerbe{N}'
\ar[r]
&
\gerbe{G}'
\ar[r]_{F'}
& 
\gerbe{H}'
\ar[r]
& 
1
}  \]
is a morphism of extensions.
\end{cor}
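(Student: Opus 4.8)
The plan is to reduce the statement to Theorem \ref{thm:1} by identifying the given gerbe $\gerbe{H}$ with the canonical quotient $\gerbe{G} / \gerbe{N}$, and then invoking the universal property of that quotient. Write $F_0 : \gerbe{G} \to \gerbe{G} / \gerbe{N}$ for the canonical morphism furnished by Theorem \ref{thm:1}. The first, and main, step is the identification of $\gerbe{H}$. Applying Theorem \ref{thm:1}(ii) to the identity morphism $\bsym{1}_{\gerbe{G}} : \gerbe{G} \to \gerbe{G}$ (which obviously satisfies $\bsym{1}_{\gerbe{G}}(\gerbe{N}) \subset \gerbe{N}$), with the given extension $1 \to \gerbe{N} \to \gerbe{G} \xar{F} \gerbe{H} \to 1$ playing the role of the target extension, yields a morphism $\Phi : \gerbe{G} / \gerbe{N} \to \gerbe{H}$, unique up to $2$-isomorphism, such that $\Phi \circ F_0 \twocong F$. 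Since $\bsym{1}_{\gerbe{G}}$ is an equivalence and induces the identity --- in particular an isomorphism --- on each sheaf $\gerbe{N}(i, i)$, Theorem \ref{thm:1}(iii) shows $\Phi$ is an equivalence. Fix a quasi-inverse $\Psi : \gerbe{H} \to \gerbe{G} / \gerbe{N}$, so that $\Psi \circ \Phi \twocong \bsym{1}_{\gerbe{G} / \gerbe{N}}$ and $\Phi \circ \Psi \twocong \bsym{1}_{\gerbe{H}}$.

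Next I construct $E$. Applying Theorem \ref{thm:1}(ii) a second time, now to the given $D : \gerbe{G} \to \gerbe{G}'$ with $D(\gerbe{N}) \subset \gerbe{N}'$ and the second extension as target, produces $E_0 : \gerbe{G} / \gerbe{N} \to \gerbe{H}'$, unique up to $2$-isomorphism, such that $(D, E_0)$ is a morphism of extensions; concretely $E_0 \circ F_0 \twocong F' \circ D$. I then define $E := E_0 \circ \Psi : \gerbe{H} \to \gerbe{H}'$.

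To see that $(D, E)$ is a morphism of extensions in the sense of Definition \ref{dfn.5}, note that the left-hand square commutes strictly, because on $\gerbe{N}$ both composites equal the restriction of $D$, using $D(\gerbe{N}) \subset \gerbe{N}'$; and the right-hand square commutes up to $2$-isomorphism, because $\Phi \circ F_0 \twocong F$ together with $\Psi \circ \Phi \twocong \bsym{1}_{\gerbe{G} / \gerbe{N}}$ gives $\Psi \circ F \twocong F_0$, whence $E \circ F = E_0 \circ \Psi \circ F \twocong E_0 \circ F_0 \twocong F' \circ D$. For uniqueness, let $E'$ be any morphism with $E' \circ F \twocong F' \circ D$. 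Then $(E' \circ \Phi) \circ F_0 \twocong E' \circ F \twocong F' \circ D \twocong E_0 \circ F_0$, so the uniqueness clause in Theorem \ref{thm:1}(ii), applied to the two morphisms $E' \circ \Phi$ and $E_0$ out of $\gerbe{G} / \gerbe{N}$, gives $E' \circ \Phi \twocong E_0$; composing on the right with $\Psi$ and using $\Phi \circ \Psi \twocong \bsym{1}_{\gerbe{H}}$ then yields $E' \twocong E_0 \circ \Psi = E$.

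The one genuinely substantive point is the first step, the identification $\gerbe{H} \simeq \gerbe{G} / \gerbe{N}$ via $\Phi$; everything afterward is formal $2$-categorical bookkeeping. I expect the only thing to watch is that all the objects appearing are honest $1$-morphisms, so that horizontal composition respects the relation $\twocong$ and the chains of $2$-isomorphisms above are legitimate. No serious obstacle is anticipated beyond this careful tracking of $2$-isomorphisms.
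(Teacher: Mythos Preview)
Your proposal is correct and follows essentially the same approach as the paper: the paper's proof simply says to replace $\gerbe{H}$ by the equivalent gerbe $\gerbe{G}/\gerbe{N}$ and then invoke property (ii) of Theorem~\ref{thm:1}, and your argument is precisely a careful unpacking of what that replacement means, constructing the equivalence $\Phi$ via parts (ii) and (iii) of the theorem and tracking the $2$-isomorphisms explicitly.
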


\begin{proof}
By the theorem we can replace $\gerbe{H}$ with the equivalent gerbe
$\gerbe{G} / \gerbe{N}$. Now we can use property (ii) of the
theorem.
\end{proof}

Given a sheaf of groups $\mcal{G}$ on $X$ and an 
open set $U \subset X$, we write
$\mcal{G}(U) := \Gamma(U, \mcal{G})$. The center of this group is denoted by
$\mrm{Z}(\mcal{G}(U))$. Since the center is not functorial, one has to be
careful what we mean by the center of the sheaf $\mcal{G}$. The correct
definition seems to be as follows. 

\begin{dfn}
Let $\mcal{G}$ be a sheaf of groups on $X$. The {\em center} of $\mcal{G}$ is
the sheaf of groups $\mrm{Z}(\mcal{G})$ whose group of sections on an open set
$U$ is
\[ \mrm{Z}(\mcal{G})(U) := 
\{ g \in \mcal{G}(U) \mid g|_V \in \mrm{Z}(\mcal{G}(V))
\text{ for any } V \subset U \} . \]
\end{dfn}

\begin{prop} \label{prop:14}
Given a gerbe $\gerbe{G}$, there is a unique 
normal subgroupoid $\gerbe{N}$ of $\gerbe{G}$ such that 
\[ \gerbe{N}(i, i) = \mrm{Z} \bigl( \gerbe{G}(i, i) \bigr) \]
for every local object $i$ of $\gerbe{G}$.
\end{prop}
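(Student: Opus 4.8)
The plan is to use the concrete description of normal subgroupoids recorded right after Definition \ref{dfn.9}: giving a normal subgroupoid $\gerbe{N}$ of $\gerbe{G}$ is the same as giving, for every local object $i$ of $\gerbe{G}$, a subsheaf of groups $\gerbe{N}(i,i) \subset \gerbe{G}(i,i)$, in such a way that $\opn{Ad}(g)\bigl(\gerbe{N}(i,i)\bigr) = \gerbe{N}(j,j)$ for every local isomorphism $g : i \iso j$. Since in our situation the family $\{\gerbe{N}(i,i)\}$ is prescribed to equal $\{\mrm{Z}(\gerbe{G}(i,i))\}$, and a normal subgroupoid is totally disconnected (so that $\gerbe{N}(U)(i,i) = \Gamma(U, \gerbe{N}(i,i))$ and $\gerbe{N}(U)(i,j) = \emptyset$ for $i \neq j$), uniqueness is immediate: the subsheaves determine $\gerbe{N}$ completely. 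Thus the content of the proposition is the existence statement, namely that the family $\gerbe{N}(i,i) := \mrm{Z}(\gerbe{G}(i,i))$ satisfies the two requirements above.

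First I would check that $\mrm{Z}(\gerbe{G}(i,i))$, defined by the center-of-a-sheaf formula preceding the proposition, is genuinely a subsheaf of groups of $\gerbe{G}(i,i)$, and that it is compatible with restriction in the sense that $\mrm{Z}(\gerbe{G}(i,i))|_W = \mrm{Z}(\gerbe{G}(i|_W, i|_W))$; the latter is immediate from the formula, since the defining condition quantifies only over open subsets. The former — that the subpresheaf $\mrm{Z}(\gerbe{G}(i,i))$ satisfies descent — is the one place where the nonstandard definition of center is essential, and it is the step I expect to be the real (if modest) obstacle. Given a covering $U = \bigcup_k U_k$ and central sections $g_k$ agreeing on overlaps, they glue to a section $g$ of the sheaf $\gerbe{G}(i,i)$; to see $g \in \mrm{Z}(\gerbe{G}(i,i))(U)$ I would fix $V \subset U$ and an arbitrary $h \in \gerbe{G}(i,i)(V)$, and verify $g|_V \circ h = h \circ g|_V$ locally on the cover $\{ V \cap U_k \}$, where it holds because each $g_k$ is central and centrality is inherited under restriction. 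Because $\gerbe{G}(i,i)$ is a sheaf, this local agreement forces $g|_V$ to be central.

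Next I would verify the $\opn{Ad}$-compatibility. For a local isomorphism $g : i \iso j$ over an open set $W$, the map $\opn{Ad}(g) : \gerbe{G}(i,i)|_W \iso \gerbe{G}(j,j)|_W$ is an isomorphism of sheaves of groups. Any isomorphism of sheaves of groups carries center to center, since the condition defining $\mrm{Z}(-)$ is intrinsic (commuting with every local section) and is preserved by a group isomorphism on each open set; combined with the restriction-compatibility above, this gives $\opn{Ad}(g)\bigl(\mrm{Z}(\gerbe{G}(i,i))|_W\bigr) = \mrm{Z}(\gerbe{G}(j,j))|_W$.

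Finally I would assemble these facts: the subsheaves $\gerbe{N}(i,i) = \mrm{Z}(\gerbe{G}(i,i))$ are sheaves (property (ii) of Definition \ref{dfn.9}) and satisfy the $\opn{Ad}$-compatibility, so by the characterization above they define a normal subgroupoid $\gerbe{N}$, whose defining family is the prescribed one; this gives existence, and uniqueness was already observed. The verification that each $\gerbe{N}(U)$ is a normal subgroupoid of $\gerbe{G}(U)$ in the sense of Definition \ref{dfn:3} — same objects, totally disconnected, and $\opn{Ad}$-stable for global isomorphisms $g \in \gerbe{G}(U)(i,j)$ — then follows by taking global sections of the $\opn{Ad}$-compatibility.
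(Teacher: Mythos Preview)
Your proposal is correct and takes essentially the same approach as the paper, which simply says ``The proof is like that of Proposition \ref{prop:13}.'' You have filled in the details of that analogous argument: verify that the prescribed family $\{\mrm{Z}(\gerbe{G}(i,i))\}$ consists of sheaves and is $\opn{Ad}$-stable, so that the characterization following Definition \ref{dfn.9} applies; uniqueness is immediate because a normal subgroupoid is determined by its automorphism sheaves. Your explicit check that $\mrm{Z}(\gerbe{G}(i,i))$ is a sheaf (using the formula for the center of a sheaf of groups) is exactly the point the paper's terse cross-reference leaves implicit.
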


The proof is like that of Proposition \ref{prop:13}.

\begin{dfn}
Let $\gerbe{G}$ be a gerbe. 
\begin{enumerate}
\item The normal subgroupoid $\gerbe{N}$ in Proposition \ref{prop:14} is
called the {\em center} of $\gerbe{G}$, and it is denoted by
$\opn{Z}(\gerbe{G})$.
\item A {\em central subgroupoid} of  $\gerbe{G}$ is a 
normal subgroupoid $\gerbe{N}$ of $\gerbe{G}$ that is contained in 
$\opn{Z}(\gerbe{G})$.
\end{enumerate}
\end{dfn}

\begin{prop} \label{prop:12}
Let $\gerbe{N}$ be a central subgroupoid of  $\gerbe{G}$. 
Then there is sheaf of abelian groups $\mcal{N}$, together with an 
isomorphism of sheaves of groups
\begin{equation} \label{eqn:23}
\chi_i : \mcal{N}|_U \to \gerbe{N}(i, i)
\end{equation}
for any open set $U$ and object $i \in \opn{Ob} \gerbe{G}(U)$,
satisfying this condition:
\begin{itemize}
\rmitem{$\diamondsuit$}
Given any $i \in \opn{Ob} \gerbe{G}(U)$,  $j \in \opn{Ob} \gerbe{G}(V)$, 
$W \subset U \cap V$, and $g \in \gerbe{G}(i, j)(W)$, one has
\[ \chi_j \circ \opn{Ad}(g) = \chi_j  \]
as sheaf homomorphisms 
$\mcal{N}|_W \to \mcal{G}(j, j)|_W$. 
\end{itemize}
The sheaf $\mcal{N}$ is unique up to a unique isomorphism.

Conversely, given a sheaf of abelian  groups $\mcal{N}$, together with a
collection of injective sheaf homomorphisms
\[ \chi_i : \mcal{N}|_U \to \opn{Z}(\gerbe{G}(i, i))  \]
for $i \in \opn{Ob} \gerbe{G}(U)$, 
that satisfy condition \tup{($\diamondsuit$)}, there exists a unique central
subgroupoid $\gerbe{N}$ of  $\gerbe{G}$ such that 
\tup{(\ref{eqn:23})} are isomorphisms.
\end{prop}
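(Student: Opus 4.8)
The crux of the proposition is the geometric version of the fact, recorded above for discrete groupoids, that for a central subgroupoid the adjoint action $\opn{Ad}(g)\colon \gerbe{N}(i,i)\to\gerbe{N}(j,j)$ does not depend on the choice of local isomorphism $g\colon i\to j$. So I would begin by establishing this: given two local isomorphisms $g,g'\in\gerbe{G}(i,j)(W)$, the element $g^{-1}\circ g'$ lies in $\gerbe{G}(i,i)(W)$, and since $\gerbe{N}(i,i)$ sits inside the center $\opn{Z}(\gerbe{G}(i,i))$, the computation $\opn{Ad}(g')(n)=\opn{Ad}(g)\bigl(\opn{Ad}(g^{-1}g')(n)\bigr)=\opn{Ad}(g)(n)$ shows $\opn{Ad}(g)=\opn{Ad}(g')$ as maps $\gerbe{N}(i,i)|_W\to\gerbe{N}(j,j)|_W$. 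Thus each pair of local objects $i,j$ carries a canonical sheaf isomorphism $\gerbe{N}(i,i)\iso\gerbe{N}(j,j)$ over any open set on which a local isomorphism exists, and by local connectedness such isomorphisms exist over a neighborhood of every point.

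For the forward direction I would build $\mcal{N}$ by descent. Using that $\gerbe{G}$ is locally nonempty, choose an open cover $\{U_k\}$ of $X$ together with objects $i_k\in\opn{Ob}\gerbe{G}(U_k)$, giving sheaves of abelian groups $\gerbe{N}(i_k,i_k)$ on $U_k$. On each overlap $U_{kl}:=U_k\cap U_l$ the canonical isomorphisms above, defined locally via $\opn{Ad}(g)$ for local isomorphisms $i_k\to i_l$, agree on double overlaps by the independence just proved, hence glue to a single sheaf isomorphism $\theta_{kl}\colon\gerbe{N}(i_k,i_k)|_{U_{kl}}\iso\gerbe{N}(i_l,i_l)|_{U_{kl}}$. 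The cocycle identity $\theta_{lm}\circ\theta_{kl}=\theta_{km}$ follows from $\opn{Ad}(h\circ g)=\opn{Ad}(h)\circ\opn{Ad}(g)$, so the $\gerbe{N}(i_k,i_k)$ glue to a sheaf of abelian groups $\mcal{N}$ on $X$, equipped with $\chi_{i_k}\colon\mcal{N}|_{U_k}\iso\gerbe{N}(i_k,i_k)$. For an arbitrary local object $i\in\opn{Ob}\gerbe{G}(U)$ I would define $\chi_i$ locally by $\opn{Ad}(g)\circ\chi_{i_k}$ for local isomorphisms $g\colon i_k\to i$; independence of $\opn{Ad}$ makes this well defined and globalizes it to $\chi_i\colon\mcal{N}|_U\iso\gerbe{N}(i,i)$. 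Condition ($\diamondsuit$) then holds because, restricted to a neighborhood where a local isomorphism $i_k\to i$ is available, both sides reduce to the same composite of $\opn{Ad}$ maps. Uniqueness of $(\mcal{N},\{\chi_i\})$ up to unique isomorphism is immediate: given another $(\mcal{N}',\{\chi_i'\})$, the local maps $(\chi_i')^{-1}\circ\chi_i\colon\mcal{N}|_U\to\mcal{N}'|_U$ agree on overlaps by ($\diamondsuit$) and patch to the required isomorphism, which is forced by the $\chi_i$.

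The converse is essentially formal. Given $\mcal{N}$ and injective $\chi_i\colon\mcal{N}|_U\to\opn{Z}(\gerbe{G}(i,i))$ satisfying ($\diamondsuit$), I would set $\gerbe{N}(i,i):=\opn{image}(\chi_i)$, a subsheaf of abelian groups of $\opn{Z}(\gerbe{G}(i,i))$ isomorphic to $\mcal{N}|_U$, hence central and a genuine sheaf. For a local isomorphism $g\colon i\to j$ condition ($\diamondsuit$) gives $\opn{Ad}(g)\circ\chi_i=\chi_j$, so $\opn{Ad}(g)\bigl(\gerbe{N}(i,i)\bigr)=\gerbe{N}(j,j)$; this is exactly the $\opn{Ad}$-invariance required of a normal subgroupoid by Definition \ref{dfn.9}, and containment in $\opn{Z}(\gerbe{G})$ is built in. Compatibility with restriction is inherited from the $\chi_i$ being sheaf morphisms. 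Finally, since a normal subgroupoid of $\gerbe{G}$ is determined by its sheaves $\gerbe{N}(i,i)$ on local objects, the prescription $\gerbe{N}(i,i)=\opn{image}(\chi_i)$ determines $\gerbe{N}$ uniquely.

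The main obstacle is the descent step in the forward direction: one cannot in general pick a global isomorphism between the reference objects $i_k$ and $i_l$ over all of $U_{kl}$ --- only locally, by local connectedness --- so the transition isomorphisms $\theta_{kl}$ must be manufactured by gluing local $\opn{Ad}(g)$'s, and it is precisely the centrality of $\gerbe{N}$ that removes the dependence on $g$ and makes this gluing (together with the cocycle condition) go through. Everything else is bookkeeping.
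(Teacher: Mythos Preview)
Your proof is correct and supplies exactly the details the paper omits: the paper's own proof is a single sentence (``This is due to the local nature of gerbes; cf.\ part (a) of the proof of Theorem \ref{thm:1}''), and your descent argument---independence of $\opn{Ad}(g)$ from centrality, gluing the $\gerbe{N}(i_k,i_k)$ via canonical transition maps, then extending $\chi_i$ to arbitrary local objects---is the standard way to cash that sentence out. Your identification of the main obstacle (that local connectedness only provides isomorphisms $i_k \to i_l$ locally on $U_{k} \cap U_{l}$, so the $\theta_{kl}$ must themselves be glued, with centrality killing the ambiguity) is precisely the point.

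One small remark: the condition ($\diamondsuit$) as printed in the paper reads $\chi_j \circ \opn{Ad}(g) = \chi_j$, which does not type-check; you have silently (and correctly) interpreted it as $\opn{Ad}(g) \circ \chi_i = \chi_j$, which is clearly what is intended and what your argument establishes and uses.
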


\begin{proof}
This is due to the local nature of gerbes; cf.\ part (a) of the proof of
Theorem \ref{thm:1}.
\end{proof}

The last proposition says that a central subgroupoid $\gerbe{N}$ of a gerbe
$\gerbe{G}$ can be viewed as a single sheaf of abelian groups.

Finally we can explain the title of the paper. 

\begin{dfn} \label{dfn:2}
A {\em central extension of gerbes} is an extension of gerbes 
\[  1 \to \gerbe{N} \xar{} \gerbe{G} \xar{F} \gerbe{H} \to 1  \]
such that $\gerbe{N}$ is a central subgroupoid of $\gerbe{G}$.
\end{dfn}

Using Proposition \ref{prop:12} to replace $\gerbe{N}$ with a 
sheaf of abelian groups $\mcal{N}$, we can rewrite this central extension as
\begin{equation} \label{eqn:27}
 1 \to \mcal{N} \xar{} \gerbe{G} \xar{F} \gerbe{H} \to 1 .
\end{equation}

Here are a couple of examples of central extensions of gerbes. The first is
somewhat tautological.

\begin{exa}
Suppose $\gerbe{N}$ is a central subgroupoid of a gerbe $\gerbe{G}$.
Then the extension of gerbes 
\[ 1 \to \gerbe{N} \to \gerbe{G} \xar{F} \gerbe{G} / \gerbe{N} \to 1  \]
from Theorem \ref{thm:1} is central.
\end{exa}

The next example was suggested to us by the referee.

\begin{exa} \label{exa:27}
Let 
\[ 1 \to \mcal{N} \to \mcal{G} \xar{F} \mcal{H} \to 1 \]
be a central extension of sheaves of groups on $X$. Given an open set $U$ 
and a $\mcal{G}|_U$-torsor $\mcal{S}$, let us denote by $F(\mcal{S})$ the
induced $\mcal{H}|_U$-torsor. This operation gives rise to a morphism of gerbes
\begin{equation} \label{eqn:24}
F : \twocat{Tors} \gerbe{G} \to \twocat{Tors} \gerbe{H} 
\end{equation}
(cf.\ Example \ref{exa:16}). Since locally any $\mcal{H}$-torsor is
trivial, it is locally induced from a $\mcal{G}$-torsor. This says that 
(\ref{eqn:24}) is locally essentially surjective on objects. 

Now for any $\mcal{G}$-torsor $\mcal{S}$, locally we have a (noncanonical)
isomorphism of sheaves of groups
\[ (\twocat{Tors} \gerbe{G})(\mcal{S}, \mcal{S}) \cong 
\mcal{G}^{\mrm{op}} . \]
Likewise for $\mcal{H}$-torsors. This implies that the 
morphism of gerbes (\ref{eqn:24}) is surjective on isomorphism sheaves, and
its kernel is a central subgroupoid of $\gerbe{G}$, isomorphic to
the sheaf $\mcal{N}$. In this way we get a  central extensions of gerbes
\begin{equation} \label{eqn:25}
1 \to \mcal{N} \to \twocat{Tors} \gerbe{G} \xar{F} \twocat{Tors} \gerbe{H}
\to 1  .
\end{equation}
\end{exa}

\section{Obstruction Classes}

We fix a topological space $X$. 
Given a sheaf $\mcal{N}$ of abelian groups on $X$, and an open covering
$\bsym{U} = \{ U_k \}_{k \in K}$ of $X$, there are the \v{C}ech
cohomology groups $\check{\mrm{H}}^p(\bsym{U}, \mcal{N})$
for  $p \geq 0$. Passing to the limit over all such open coverings 
we obtain the \v{C}ech cohomology groups $\check{\mrm{H}}^p(X, \mcal{N})$.

{} From here until the end of this section we consider a central
extension of gerbes
\begin{equation} \label{eqn:5}
1 \to  \mcal{N} \to \gerbe{G} \xar{F} \gerbe{H} \to 1 
\end{equation}
on $X$ (see Definition \ref{dfn:2} and Proposition \ref{prop:12}). 

\begin{construction} \label{const2}
Let $i, j \in \opn{Ob} \gerbe{G}(X)$, and let
$h \in \gerbe{H}(X) \bigl( F(i), F(j) \bigr)$. 
Since $F$ is a weak epimorphism, there exists an open covering 
$\bsym{U} = \{ U_k \}_{k \in K}$ of $X$, such that for every $k \in K$ 
there is an isomorphism $g_k \in \gerbe{G}(U_k)(i, j)$ with $F(g_k) = h$.
For every $k_0, k_1 \in K$ we define
\[ g_{k_0, k_1} := g_{k_1}^{-1} \circ g_{k_0}
\in \gerbe{G}(U_{k_0, k_1})(i, i) . \]
Since $F(g_{k_0, k_1}) = 1$ we see that in fact
$g_{k_0, k_1} \in \mcal{N}(U_{k_0, k_1})$.
An easy calculation shows that the collection
\begin{equation} 
c := \{ g_{k_0, k_1} \}_{k_0, k_1 \in K}
\end{equation}
is a \v{C}ech $1$-cocycle for the covering $\bsym{U}$ with values in
the sheaf of groups $\mcal{N}$. 
\end{construction}

\begin{lem}
Let $i, j \in \opn{Ob} \gerbe{G}(X)$ and let
$h \in \gerbe{H}(X) \bigl( F(i), F(j) \bigr)$. 
Suppose that $c$ and $c'$ are $1$-cocycles with values in $\mcal{N}$, for open
coverings $\bsym{U}$  and $\bsym{U}'$, obtained as in Construction
\tup{\ref{const2}}, for the same isomorphism $h$. 
Then their \v{C}ech cohomology classes 
$[c], [c'] \in \check{\mrm{H}}^1(X, \mcal{N})$ are equal. 
\end{lem}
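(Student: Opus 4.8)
The plan is to pass to a common refinement of the two coverings and to exhibit the difference of the \emph{refined} cocycles as a \v{C}ech coboundary. Since the canonical map $\check{\mrm{H}}^1(\bsym{U}, \mcal{N}) \to \check{\mrm{H}}^1(X, \mcal{N})$ factors through the common refinement $\bsym{W} = \{ U_k \cap U'_{k'} \}_{(k, k') \in K \times K'}$, with refinement maps $(k,k') \mapsto k$ and $(k,k') \mapsto k'$, it suffices to show that the images of $c$ and $c'$ in $\check{\mrm{H}}^1(\bsym{W}, \mcal{N})$ coincide. So the whole argument takes place on $\bsym{W}$.

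First I would record the data. Write $\bsym{U} = \{U_k\}$ with lifts $g_k \in \gerbe{G}(U_k)(i,j)$ satisfying $F(g_k) = h$ and $g_{k_0,k_1} = g_{k_1}^{-1} \circ g_{k_0}$, and likewise $\bsym{U}' = \{U'_{k'}\}$ with lifts $g'_{k'}$, $F(g'_{k'}) = h$, and $g'_{k'_0,k'_1} = (g'_{k'_1})^{-1} \circ g'_{k'_0}$. On each overlap $W_{k,k'} := U_k \cap U'_{k'}$ I define the $0$-cochain
\[ n_{k,k'} := (g'_{k'})^{-1} \circ g_k \in \gerbe{G}(W_{k,k'})(i,i) . \]
Because $F(g_k) = F(g'_{k'}) = h$, one has $F(n_{k,k'}) = \bsym{1}$, so $n_{k,k'}$ is a section of $\mcal{N}$ over $W_{k,k'}$; thus $n = \{n_{k,k'}\}$ is a \v{C}ech $0$-cochain for $\bsym{W}$ with values in $\mcal{N}$.

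The core computation relates the two refined cocycles through $n$. From $g_k = g'_{k'} \circ n_{k,k'}$ one gets, on the relevant overlap,
\[ g_{k_0,k_1} = g_{k_1}^{-1} \circ g_{k_0} = n_{k_1,k'_1}^{-1} \circ g'_{k'_0,k'_1} \circ n_{k_0,k'_0} . \]
This is where centrality enters: all of $g_{k_0,k_1}$, $g'_{k'_0,k'_1}$ and the $n_{k,k'}$ are sections of the abelian sheaf $\mcal{N}$ (identified with $\gerbe{N}(i,i)$ via the single isomorphism $\chi_i$), and $\mcal{N}$ is central in $\gerbe{G}$. Hence these elements all commute, conjugation by the $n$'s is trivial, and the displayed identity collapses to $g_{k_0,k_1} = g'_{k'_0,k'_1} \cdot n_{k_0,k'_0} \cdot n_{k_1,k'_1}^{-1}$ in $\mcal{N}$. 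This says precisely that the refined cocycles differ by the coboundary of $n$ (up to the usual sign convention), whence $[c] = [c']$.

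The main obstacle is the bookkeeping around the identification $\chi_i$ and centrality: one must check that $g_{k_0,k_1}$, $g'_{k'_0,k'_1}$ and $n_{k,k'}$ genuinely all live in the \emph{same} abelian group $\mcal{N}(-)$ of automorphisms of the fixed object $i$, so that the product $n_{k_1,k'_1}^{-1} \circ g'_{k'_0,k'_1} \circ n_{k_0,k'_0}$ rearranges into a coboundary. Once this is in place, the remainder is the standard refinement argument for \v{C}ech cohomology, and I would only need functoriality of the restriction functors of $\gerbe{G}$ to see that $n$ restricts correctly; note that no triple overlaps are required, since the coboundary relation only involves the double overlaps $W_{(k_0,k'_0),(k_1,k'_1)}$.
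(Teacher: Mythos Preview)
Your proof is correct and follows essentially the same route as the paper: pass to a common refinement, build a $0$-cochain from the differences $(g'_{k'})^{-1} \circ g_k$ (the paper uses the inverse $g_k^{-1} \circ g'_{k'}$), and verify that its \v{C}ech coboundary is the quotient of the two refined $1$-cocycles. The only cosmetic difference is that you take the explicit product refinement $\bsym{W} = \{U_k \cap U'_{k'}\}$ while the paper takes an arbitrary common refinement with comparison maps $\phi, \phi'$; also, your remark about ``conjugation by the $n$'s'' is slightly off-target, since the displayed identity $g_{k_0,k_1} = n_{k_1,k'_1}^{-1} \circ g'_{k'_0,k'_1} \circ n_{k_0,k'_0}$ already sits entirely in the abelian group $\mcal{N}$ and rearranges by commutativity alone, with no conjugation involved.
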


\begin{proof}
Say $\bsym{U}' = \{ U'_k \}_{k \in K'}$.
Suppose that in the course of constructing the cocycle $c'$ we chose,
for every $k \in K'$, an isomorphism 
$g'_k \in \gerbe{G}(U'_k)(i, j)$ with $F(g'_k) = h$.
So
$g'_{k_0, k_1} = {g'}_{k_1}^{-1} \circ g'_{k_0}$
and $c' = \{ g'_{k_0, k_1} \}_{k_0, k_1 \in K'}$. 

Take some open covering
$\bsym{V} = \{ V_l \}_{l \in L}$ of $X$ which refines both 
$\bsym{U}$ and $\bsym{U}'$. Thus there are functions
$\phi : L \to K$ and $\phi' : L \to K'$, such that
$V_l \subset U_{\phi(l)}$ and $V_l \subset U'_{\phi'(l)}$ for all $l \in L$. 
We get cocycles
\[ \phi^*(c) := \{ g_{\phi(l_0), \phi(l_1)} \}_{ l_0, l_1 \in L} \]
and
\[ {\phi'}^*(c') := \{ g'_{\phi'(l_0), \phi'(l_1)} \}_{ l_0, l_1 \in L} . \]
For any $l \in L$ let
\[ f_l := g_{\phi(l)}^{-1} \circ g'_{\phi'(l)}  \in \gerbe{G}(V_l)(i, i) . \]
Now
\[ F(f_l) = F(g_{\phi(l)})^{-1} \circ F(g'_{\phi'(l)}) = 
h^{-1} \circ h = 1 . \]
So $b := \{ f_l \}_{l \in L}$ is a $0$-cochain 
with values in $\mcal{N}$. 

Take any $l_0, l_1 \in L$. Then
\[ \begin{aligned}
& (f_{l_1}^{-1} \circ f_{l_0}) \circ g_{\phi(l_0), \phi(l_1)} \\
& \quad = 
(g_{\phi(l_1)}^{-1} \circ g_{\phi'(l_1)}')^{-1} \circ
(g_{\phi(l_0)}^{-1} \circ g_{\phi'(l_0)}') \circ 
(g_{\phi(l_1)}^{-1} \circ g_{\phi(l_0)}) \\
& \quad = 
g_{\phi'(l_1)}'^{-1} \circ g_{\phi'(l_0)}' =
g'_{\phi'(l_0), \phi'(l_1)} \, . 
\end{aligned} \]
Denoting the \v{C}ech coboundary operator by $\d$, we see that
\[ \d(b) \cdot \phi^*(c) = {\phi'}^*(c') . \]
\end{proof}

In view of this lemma, the following definition makes sense. 

\begin{dfn} \label{dfn.6}
Let $i, j \in \opn{Ob} \gerbe{G}(X)$, and let
$h \in \gerbe{H}(X) \bigl( F(i), F(j) \bigr)$. 
Take any $1$-cocycle $c$ as in Construction \tup{\ref{const2}}.
We define the obstruction class 
\[ \opn{cl}^1_{F}(h) := [c] \in \check{\mrm{H}}^1(X, \mcal{N}) . \]
\end{dfn}

\begin{thm}[Obstruction to lifting isomorphisms] \label{thm.3}
Consider a central extension of gerbes 
\[ 1 \to  \mcal{N} \to \gerbe{G} \xar{F} \gerbe{H} \to 1 . \]
Let $i, j \in \opn{Ob} \gerbe{G}(X)$, and let
$h \in \gerbe{H}(X) \bigl( F(i), F(j) \bigr)$. 
Then there exists an isomorphism 
$g \in \gerbe{G}(X)(i,j)$ satisfying 
$F(g) = h$ if and only if
\[ \opn{cl}_F^1(h) = 1 . \]
\end{thm}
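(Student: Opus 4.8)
The plan is to prove the two implications separately, with essentially all of the work in the converse direction.

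For the implication "a lift exists $\Rightarrow$ $\opn{cl}^1_F(h) = 1$": suppose $g \in \gerbe{G}(X)(i,j)$ satisfies $F(g) = h$. Since the preceding lemma shows that $\opn{cl}^1_F(h)$ is independent of the covering and of the chosen local lifts, I would compute it using the trivial covering $\bsym{U} = \{X\}$ together with the single lift $g$ itself. The associated $1$-cocycle then has the one entry $g^{-1} \circ g = 1$, so $\opn{cl}^1_F(h) = 1$.

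Conversely, assume $\opn{cl}^1_F(h) = 1$. I would start from the data of Construction \ref{const2}: a covering $\bsym{U} = \{U_k\}_{k \in K}$, local lifts $g_k \in \gerbe{G}(U_k)(i,j)$ with $F(g_k) = h$, and the cocycle $g_{k_0,k_1} = g_{k_1}^{-1} \circ g_{k_0} \in \mcal{N}(U_{k_0,k_1})$ representing the trivial class. After passing to a refinement (and restricting the $g_k$ accordingly) I may assume this cocycle is a coboundary, so there is a $0$-cochain $\{f_k\}$ with $f_k \in \mcal{N}(U_k)$ and $g_{k_0,k_1} = f_{k_1}^{-1} \circ f_{k_0}$, matching the convention $\d(b)_{k_0,k_1} = f_{k_1}^{-1} \circ f_{k_0}$ from the lemma. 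Using $\chi_i$ I regard each $f_k$ as a central section of $\gerbe{G}(i,i)$ over $U_k$, so that this equation holds inside the group $\gerbe{G}(i,i)$; here the centrality of $\mcal{N}$ is what makes the identification of $\gerbe{N}(i,i)$ with the single sheaf $\mcal{N}$, and hence the whole cocycle, well defined.

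The key step is to correct the lifts by setting $g'_k := g_k \circ f_k^{-1} \in \gerbe{G}(U_k)(i,j)$, using the right $\gerbe{G}(i,i)$-action on the bitorsor $\gerbe{G}(i,j)$. Because $f_k \in \mcal{N} = \opn{Ker}(F)$ one still has $F(g'_k) = h$. Then $(g'_{k_1})^{-1} \circ g'_{k_0} = f_{k_1} \circ g_{k_0,k_1} \circ f_{k_0}^{-1}$, and substituting the coboundary relation $g_{k_0,k_1} = f_{k_1}^{-1} \circ f_{k_0}$ and cancelling in $\gerbe{G}(i,i)$ collapses this to $1$. Hence the $g'_k$ agree on all double overlaps $U_{k_0,k_1}$. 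Since $\gerbe{G}$ is a stack, $\gerbe{G}(i,j)$ is a sheaf (descent for morphisms), so the compatible family $\{g'_k\}$ glues to a section $g \in \Gamma(X,\gerbe{G}(i,j)) = \gerbe{G}(X)(i,j)$; it is automatically an isomorphism because $\gerbe{G}(X)$ is a groupoid. Finally $F(g)$ and $h$ agree locally, as $F(g)|_{U_k} = F(g'_k) = h|_{U_k}$, and since $\gerbe{H}(F(i),F(j))$ is a sheaf they are equal, giving the desired lift $F(g) = h$.

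I expect the main difficulty to be bookkeeping rather than conceptual: lining up the \v{C}ech coboundary convention with the left/right bitorsor actions so that substituting $g_{k_0,k_1} = f_{k_1}^{-1} \circ f_{k_0}$ really does collapse $(g'_{k_1})^{-1} \circ g'_{k_0}$ to the identity, and being careful that the correction $g'_k = g_k \circ f_k^{-1}$ preserves $F(g'_k) = h$. The only genuinely structural inputs are the sheaf property of $\gerbe{G}(i,j)$ (to glue the corrected lifts) and of $\gerbe{H}(F(i),F(j))$ (to deduce $F(g) = h$ from its local validity), both of which are part of the stack axioms already in force.
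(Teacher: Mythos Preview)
Your proposal is correct and follows essentially the same approach as the paper's proof: both directions are handled identically, using the trivial covering for the forward implication and correcting the local lifts by the coboundary $\{f_k\}$ via $g_k \circ f_k^{-1}$ for the converse, then gluing by the sheaf property of $\gerbe{G}(i,j)$. Your write-up in fact supplies a bit more detail (the explicit verification that $(g'_{k_1})^{-1}\circ g'_{k_0}$ collapses to $1$ and that $F(g)=h$ by the sheaf property of $\gerbe{H}(F(i),F(j))$) than the paper does.
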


In other words, the class $\opn{cl}_F^1(h)$ is the obstruction to lifting the
isomorphism 
$h : F(i) \to F(j)$
to an isomorphism 
$g : i \to j$.

\begin{proof}
First assume there exists a lifting $g$. 
We construct a cocycle $c$ as follows: for the open covering 
$\bsym{U} = \{ U_k \}_{k \in K}$ we take 
$K := \{ 0 \}$ and $U_0 := X$. We then take $g_0 := g$. 
The resulting cocycle is $c$ is trivial, and hence
$\opn{cl}_F^1(h) = 1$. 

Conversely, assume that $\opn{cl}_F^1(h) = 1$. 
Let $c' = \{ g'_{k_0, k_1} \}_{k_0, k_1 \in K}$ be a $1$-cocycle that represents
$\opn{cl}_F^1(h)$ on some open covering
$\bsym{U}$. Say $g'_k \in \gerbe{G}(U_k)(i, j)$ are the isomorphisms
chosen in the construction of $c'$, so that
$g'_{k_0, k_1} =  g'^{-1}_{k_1} \circ g'_{k_0}$. 

By replacing $\bsym{U}$ with a suitable refinement, we can assume
that $c'$ is a coboundary; i.e.\ there is a $0$-cochain
$b := \{ f_k \}_{k \in K}$ with values in $\mcal{N}$
such that $c' = \d(b)$. Define
\[ g_k := g'_k \circ f^{-1}_k \in  \gerbe{G}(U_k)(i, j) . \]
A calculation shows that $\{ g_k \}_{k \in K}$ is a $0$-cocycle with values in
the sheaf of sets $\gerbe{G}(i, j)$. 
Hence it glues to a global isomorphism
$g \in \gerbe{G}(X)(i, j)$. And by construction we have
$F(g) = h$.
\end{proof}

\begin{cor} \label{cor:1}
Given a central extension of gerbes as above, let 
$i, j \in \opn{Ob} \gerbe{G}(X)$. Then 
$\gerbe{G}(X)(i, j) \neq \emptyset$
if and only if there exists some 
$h \in \gerbe{H}(X) \bigl( F(i), F(j) \bigr)$
such that
$\opn{cl}_F^1(h) = 1$.
\end{cor}

\begin{proof}
This is an immediate consequence of the theorem.
\end{proof}

\begin{rem}
In an earlier version of this paper we claimed that 
$\opn{cl}_F^1(h) = \opn{cl}_F^1(h')$ for any
$h, h' \in \gerbe{H}(X) \bigl( F(i), F(j) \bigr)$. 
However the referee discovered a mistake in our proof.
\end{rem}

\begin{construction} \label{const1}
Let $j \in \opn{Ob} \gerbe{H}(X)$. 
Choose some open covering
$\bsym{U} = \{ U_k \}_{k \in K}$ of $X$. For every $k \in K$ choose,
if possible, an object
$i_k \in \opn{Ob} \gerbe{G}(U_k)$ and an isomorphism
\[ h_k \in \gerbe{H}(U_k) \bigl( F(i_k), j \bigr)  . \]
For every $(k_0, k_1) \in K \times K$ define
\[ h_{k_0, k_1} := h_{k_1}^{-1} \circ h_{k_0}
\in \gerbe{H}(U_{k_0, k_1}) \bigl( F(i_{k_0}), F(i_{k_1}) \bigr)
. \]
Choose, if possible, an isomorphism
\[ g_{k_0, k_1} \in \gerbe{G}(U_{k_0, k_1})(i_{k_0}, i_{k_1}) \]
that lifts $h_{k_0, k_1}$, i.e.\
$F(g_{k_0, k_1}) = h_{k_0, k_1}$. Define
\[ g_{k_0, k_1, k_2} := g_{k_0, k_2}^{-1} \circ g_{k_1, k_2} \circ
g_{k_0, k_1} \in \gerbe{G}(U_{k_0, k_1, k_2})(i_{k_0}, i_{k_0})
. \]
Thus we get a collection of elements
\begin{equation} \label{eqn:2}
c := \{ g_{k_0, k_1, k_2} \}_{k_0, k_1, k_2 \in K} . 
\end{equation}
\end{construction}

\begin{lem} \label{lem:10}
The collection $c$ from this construction is a \v{C}ech $2$-cocycle with values
in $\mcal{N}$.
\end{lem}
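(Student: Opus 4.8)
The plan is to verify two things: first, that each $g_{k_0, k_1, k_2}$ actually lies in $\mcal{N}(U_{k_0, k_1, k_2})$ (so that $c$ is a cochain with values in $\mcal{N}$), and second, that it satisfies the $2$-cocycle identity. For the first point, I would apply $F$ to the defining expression $g_{k_0, k_1, k_2} = g_{k_0, k_2}^{-1} \circ g_{k_1, k_2} \circ g_{k_0, k_1}$. Since each $g_{k_a, k_b}$ lifts $h_{k_a, k_b}$, and the $h_{k_a, k_b} := h_{k_b}^{-1} \circ h_{k_a}$ manifestly satisfy the cocycle relation $h_{k_0, k_2}^{-1} \circ h_{k_1, k_2} \circ h_{k_0, k_1} = 1$ in $\gerbe{H}$, we get $F(g_{k_0, k_1, k_2}) = 1$. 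Because $\mcal{N} = \opn{Ker}(F)$, this places $g_{k_0, k_1, k_2}$ in $\mcal{N}(U_{k_0, k_1, k_2})$, as required.

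Next I would establish the $2$-cocycle identity, which in multiplicative notation for an abelian sheaf reads
\[
g_{k_1, k_2, k_3} \cdot g_{k_0, k_2, k_3}^{-1} \cdot g_{k_0, k_1, k_3} \cdot g_{k_0, k_1, k_2}^{-1} = 1
\]
on $U_{k_0, k_1, k_2, k_3}$. The strategy is to substitute the definition of each $g_{k_a, k_b, k_c}$ in terms of the $g_{k_a, k_b}$ and check that everything cancels. The key simplification is that, since these four elements all lie in the \emph{abelian} sheaf $\mcal{N}$, their product can be rearranged freely; moreover, by centrality (Definition \ref{dfn:2} and Proposition \ref{prop:12}), the values of $\mcal{N}$ are identified as a single abelian group independent of the object $i_k$, so the conjugation ambiguities coming from the different source objects $i_{k_a}$ disappear. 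This is exactly where centrality is essential: without it the elements $g_{k_a, k_b, k_c}$ would live in different automorphism groups $\gerbe{G}(i_{k_a}, i_{k_a})$ and the products would not even be comparable.

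Concretely, I would expand the product in the groupoid $\gerbe{G}$ using $g_{k_a, k_b, k_c} = g_{k_a, k_c}^{-1} \circ g_{k_b, k_c} \circ g_{k_a, k_b}$, all restricted to the common quadruple overlap. Because each term is central in $\mcal{N}$, I can treat the $g_{k_a, k_b}$ as formal symbols and track how the telescoping cancellations occur: the relation is essentially the statement that the ``coboundary of a coboundary vanishes,'' phrased for the assignment $(k_a, k_b) \mapsto g_{k_a, k_b}$. The manipulation must be done carefully because the $g_{k_a, k_b}$ themselves are \emph{not} in $\mcal{N}$ — only the triple combinations are — so I would conjugate systematically using $\opn{Ad}$ and invoke centrality to discard the conjugations at the end.

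The main obstacle I anticipate is the bookkeeping in the second step: because the underlying morphisms $g_{k_a, k_b}$ compose in a noncommutative groupoid while the cocycle lives in an abelian sheaf, one must be scrupulous about the order of composition and about which conjugation identities (from centrality) are used to move the $\mcal{N}$-valued factors past the $g_{k_a, k_b}$. The right framing is to reduce the computation to the classical verification that a \v{C}ech $2$-cochain arising from a ``difference of liftings'' is a cocycle, with centrality supplying precisely the commutativity needed to make the classical cancellation go through. Once the conjugations are handled correctly via Proposition \ref{prop:12}, the algebra collapses to the standard identity and the cocycle condition follows.
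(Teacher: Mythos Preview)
Your approach is correct and matches the paper's proof: first show $F(g_{k_0,k_1,k_2})=1$ so the cochain takes values in $\mcal{N}$, then verify the coboundary identity by substituting the definitions and using centrality to move the $\mcal{N}$-valued factors past the groupoid morphisms $g_{k_a,k_b}$. The paper carries out this second step as a single explicit chain of equalities rather than invoking Proposition~\ref{prop:12} abstractly, but the content is the same.
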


\begin{proof}
Since $F(g_{k_0, k_1, k_2}) = 1$ it follows that 
$g_{k_0, k_1, k_2} \in \mcal{N}(U_{k_0, k_1, k_2})$. 

Let us now calculate the value of the coboundary of $c$ in
$\mcal{N}(U_{k_0, k_1, k_2, k_3})$, using the fact that $\mcal{N}$ is
central in $\mcal{G}$:
\[ \begin{aligned}
& g_{k_1, k_2, k_3} \circ g_{k_0, k_2, k_3}^{-1} \circ 
g_{k_0, k_1, k_3} \circ g_{k_0, k_1, k_2}^{-1} \\
& \quad = g_{k_1, k_2, k_3} 
\circ ( g_{k_0, k_3}^{-1} \circ g_{k_2, k_3} \circ 
g_{k_0, k_2} )^{-1} \\
& \qquad \quad  \circ 
( g_{k_0, k_3}^{-1} \circ g_{k_1, k_3} \circ g_{k_0, k_1} ) 
\circ ( g_{k_0, k_2}^{-1} \circ g_{k_1, k_2} \circ 
g_{k_0, k_1} )^{-1} \\
& \quad = g_{k_1, k_2, k_3} \circ 
g_{k_0, k_2}^{-1} \circ g_{k_2, k_3}^{-1} \circ 
g_{k_0, k_3}  \\
& \qquad  \quad \circ g_{k_0, k_3}^{-1} \circ g_{k_1, k_3} \circ g_{k_0, k_1}
\circ g_{k_0, k_1}^{-1} \circ g_{k_1, k_2}^{-1} \circ 
g_{k_0, k_2} \\
& \quad =  g_{k_0, k_2}^{-1} \circ g_{k_2, k_3}^{-1} 
\circ g_{k_1, k_3} \circ g_{k_1, k_2, k_3}
\circ g_{k_1, k_2}^{-1} \circ 
g_{k_0, k_2} \\
& \quad =  g_{k_0, k_2}^{-1} \circ g_{k_2, k_3}^{-1} 
\circ g_{k_1, k_3} \\
& \qquad \quad  \circ (g_{k_1, k_3}^{-1} \circ g_{k_2, k_3} \circ 
g_{k_1, k_2}) \circ g_{k_1, k_2}^{-1} \circ 
g_{k_0, k_2} \\
& \quad =  1 .
\end{aligned} \]
\end{proof}

\begin{lem} \label{lem.4}
Let $j \in \opn{Ob} \gerbe{H}(X)$. 
Suppose that $c$ and $c'$ are $2$-cocycles with values in $\mcal{N}$, for open
coverings $\bsym{U}$  and $\bsym{U}'$, obtained as in Construction
\tup{\ref{const1}}. Then their \v{C}ech cohomology classes 
$[c], [c'] \in \check{\mrm{H}}^2(X, \mcal{N})$ are equal. 
\end{lem}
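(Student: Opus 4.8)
The plan is to follow the template used for the corresponding statement about $1$-cocycles (the lemma proved right after Construction \ref{const2}): pass to a common refinement and exhibit an explicit \v{C}ech $1$-cochain with values in $\mcal{N}$ whose coboundary carries one pulled-back cocycle to the other. Write $c$ as arising from data $(\bsym{U}, \{ i_k \}, \{ h_k \}, \{ g_{k_0, k_1} \})$ and $c'$ from data $(\bsym{U}', \{ i'_k \}, \{ h'_k \}, \{ g'_{k_0, k_1} \})$, with $\bsym{U} = \{ U_k \}_{k \in K}$ and $\bsym{U}' = \{ U'_k \}_{k \in K'}$. I would choose an open covering $\bsym{V} = \{ V_l \}_{l \in L}$ refining both, with refinement maps $\phi : L \to K$ and $\phi' : L \to K'$ satisfying $V_l \subset U_{\phi(l)} \cap U'_{\phi'(l)}$. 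Since pullback along a refinement map induces a well-defined map on \v{C}ech cohomology, it then suffices to prove $[\phi^*(c)] = [{\phi'}^*(c')]$ in $\check{\mrm{H}}^2(\bsym{V}, \mcal{N})$.

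The first genuine step is to produce comparison isomorphisms. On $V_l$ I have the two local objects $i_{\phi(l)}$ and $i'_{\phi'(l)}$ of $\gerbe{G}$, together with the isomorphism $(h'_{\phi'(l)})^{-1} \circ h_{\phi(l)} : F(i_{\phi(l)}) \iso F(i'_{\phi'(l)})$ in $\gerbe{H}(V_l)$. Because $F$ is a weak epimorphism, after replacing $\bsym{V}$ by a finer covering (harmless, by functoriality of pullback) I would choose a lift $e_l \in \gerbe{G}(V_l)(i_{\phi(l)}, i'_{\phi'(l)})$ with $F(e_l) = (h'_{\phi'(l)})^{-1} \circ h_{\phi(l)}$. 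Abbreviating $A_{l_0, l_1} := g_{\phi(l_0), \phi(l_1)}$ and $A'_{l_0, l_1} := g'_{\phi'(l_0), \phi'(l_1)}$, I would then set
\[ b_{l_0, l_1} := e_{l_0}^{-1} \circ (A'_{l_0, l_1})^{-1} \circ e_{l_1} \circ A_{l_0, l_1} \in \gerbe{G}(V_{l_0, l_1})(i_{\phi(l_0)}, i_{\phi(l_0)}) . \]
A short computation with the defining relations $F(g_{k_0, k_1}) = h_{k_1}^{-1} \circ h_{k_0}$ and $F(g'_{k_0, k_1}) = (h'_{k_1})^{-1} \circ h'_{k_0}$ collapses all the $h$-factors and yields $F(b_{l_0, l_1}) = 1$, so that $b := \{ b_{l_0, l_1} \}$ is a \v{C}ech $1$-cochain with values in $\mcal{N}$.

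The main step is the computation comparing the two pulled-back cocycles. Solving the definition of $b_{l_0, l_1}$ for $A'_{l_0, l_1}$ gives $A'_{l_0, l_1} = e_{l_1} \circ A_{l_0, l_1} \circ b_{l_0, l_1}^{-1} \circ e_{l_0}^{-1}$; substituting the three such expressions into ${\phi'}^*(c')_{l_0, l_1, l_2} = (A'_{l_0, l_2})^{-1} \circ A'_{l_1, l_2} \circ A'_{l_0, l_1}$ makes the inner $e$'s telescope, leaving an outer conjugation by $e_{l_0}$ and the factors $b_{l_0, l_2}, b_{l_1, l_2}^{-1}, b_{l_0, l_1}^{-1}$ interleaved with the $A$'s. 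Centrality is the crucial ingredient here: by Proposition \ref{prop:12} every element of $\mcal{N}$ is fixed by $\opn{Ad}(g)$ for any local isomorphism $g$, so each $\mcal{N}$-valued factor may be commuted past the (nonabelian) $A$'s without change. Collecting these factors to the left isolates $(A_{l_0, l_2})^{-1} \circ A_{l_1, l_2} \circ A_{l_0, l_1} = \phi^*(c)_{l_0, l_1, l_2}$, which itself lies in $\mcal{N}$, whereupon the outer conjugation by $e_{l_0}$ drops out, again by centrality. The outcome would be the identity
\[ {\phi'}^*(c')_{l_0, l_1, l_2} = b_{l_0, l_2} \circ b_{l_1, l_2}^{-1} \circ b_{l_0, l_1}^{-1} \circ \phi^*(c)_{l_0, l_1, l_2} \]
in $\mcal{N}(V_{l_0, l_1, l_2})$; in the coboundary convention of Lemma \ref{lem:10} this reads ${\phi'}^*(c') = \d(b)^{-1} \cdot \phi^*(c)$. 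Since $\d(b)^{-1} = \d(b^{-1})$ is a coboundary, $[\phi^*(c)] = [{\phi'}^*(c')]$, and hence $[c] = [c']$ in $\check{\mrm{H}}^2(X, \mcal{N})$.

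I expect the bookkeeping in this last computation to be the main obstacle: one must track the domains and codomains of $e_l$, $A_{l_0, l_1}$ and $A'_{l_0, l_1}$ carefully through the substitution, and invoke centrality at exactly the right moments to reduce a nonabelian composition to an ordinary abelian \v{C}ech coboundary. Everything else—the common refinement, the existence of the lifts $e_l$, and the invariance of the class under refinement—is routine.
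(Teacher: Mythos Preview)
Your argument is correct, and it is a genuinely different route from the paper's. The paper proves the lemma by a four-step reduction: first it shows invariance when only the lifts $g_{k_0,k_1}$ are changed (explicit coboundary), then when the isomorphisms $h_k$ are changed (here the resulting cocycle is literally equal to the old one, after a refinement to lift the discrepancies $h_k^{-1}\circ h'_k$), then when the local objects $i_k$ are changed (again equality, using centrality to kill a conjugation), and finally passes to a common refinement to handle different coverings. Each step isolates one kind of choice.

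Your approach collapses all four choices into a single computation on a common refinement, by directly writing down comparison isomorphisms $e_l$ lifting $(h'_{\phi'(l)})^{-1}\circ h_{\phi(l)}$ and the cochain $b_{l_0,l_1}$. The trade-off is exactly what you anticipate: you buy brevity at the cost of a denser nonabelian bookkeeping exercise, in which centrality (via Proposition~\ref{prop:12}) must be invoked repeatedly to slide $\mcal{N}$-valued factors past nonabelian ones and to drop the outer conjugation by $e_{l_0}$. The paper's decomposition is longer but makes the role of each hypothesis more transparent---in particular, steps (b) and (c) show that changing $h_k$ or $i_k$ does not merely change the cocycle by a coboundary but actually leaves it unchanged, a slightly sharper statement that your all-at-once computation does not separate out.
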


\begin{proof}
The cocycle $c'$ is constructed using some open covering
$\bsym{U}' = \{ U'_k \}_{k \in K'}$,
objects
$i'_k \in \opn{Ob} \gerbe{G}(U'_k)$
that lift $j$, isomorphisms
$h'_k \in \gerbe{H}(U'_k) \bigl( F(i'_k), j \bigr)$,
and isomorphisms 
$g'_{k_0, k_1} \in \gerbe{G}(U'_{k_0, k_1})(i'_{k_0}, i'_{k_1})$
that lift 
$h'_{k_0, k_1} := {h'}_{k_1}^{-1} \circ h'_{k_0}$. 

The proof proceeds in four steps, labeled (a)-(d). 

\medskip \noindent 
(a) Suppose $\bsym{U}' = \bsym{U}$, 
$i'_k = i_k$ and $h'_k = h_k$, but we choose some other lifting
$g'_{k_0, k_1}$ of $h_{k_0, k_1}$. 
The $2$-cocycle $c' = \{ g'_{k_0, k_1, k_2} \}$ is
\begin{equation} \label{eqn:4}
g'_{k_0, k_1, k_2} := g_{k_0, k_2}'^{-1} \circ g_{k_1, k_2}' \circ
g_{k_0, k_1}'  \in \gerbe{G}(U_{k_0, k_1, k_2})(i_{k_0}, i_{k_0}) .
\end{equation} 
Now there are unique elements
\[ n_{k_0, k_1} \in \mcal{N}(U_{k_0, k_1})(i_{k_0}, i_{k_0}) \]
such that
\[ g'_{k_0, k_1} = n_{k_0, k_1} \circ g_{k_0, k_1} . \]
Consider the \v{C}ech $1$-cochain 
$b := \{ n_{k_0, k_1} \}$ with values in $\mcal{N}$. A little calculation shows
that
\[ g'_{k_0, k_1, k_2} = g_{k_0, k_1, k_2} \circ 
(n_{k_0, k_2}^{-1} \circ  n_{k_0, k_1} \circ n_{k_1, k_2}) ; \]
so that 
$c' = c \cdot \d(b)$. We see that $c$ and $c'$ have
the same cohomology class. 

\medskip \noindent 
(b) Next suppose $\bsym{U}' = \bsym{U}$ and $i'_k = i_k$, but we 
choose other isomorphisms
$h'_k \in \gerbe{H}(U_k)(F(i_k), j)$.
Define
\[ h'_{k_0, k_1} := h_{k_1}'^{-1} \circ h'_{k_0} 
\in \gerbe{H}(U_{k_0, k_1})(F(i_{k_0}), F(i_{k_1})) . \]
Consider the elements
\[ f_k := h_k^{-1} \circ h'_k \in \gerbe{H}(U_k)
(F(i_k), F(i_k)) . \]
Take some open covering $\bsym{V} = \{ V_l \}_{l \in L}$ that refines 
$\bsym{U}$, with comparison function $\phi : L \to K$, such that for every 
$l \in L$ the isomorphism $f_{\phi(l)}$ lifts to some
$g_l \in \bsym{\gerbe{G}}(V_l)(i_{\phi(l)}, i_{\phi(l)})$.
This is possible since $F$ is locally surjective on isomorphism sheaves. 
By replacing $\bsym{U}$ with $\bsym{V}$, we can now assume that each
$f_k$ lifts to some
$g_k \in \bsym{\gerbe{G}}(U_k)(i_{k}, i_{k})$.

Now let us define
\[ g''_{k_0, k_1} := g_{k_1}^{-1} \circ g_{k_0, k_1} \circ
g_{k_0} \in \gerbe{G}(U_{k_0, k_1})(i_{k_0}, i_{k_1}) . \]
Then $g''_{k_0, k_1}$ is a lifting of $h'_{k_0, k_1}$. Proceeding as in equation
(\ref{eqn:4}), we obtain a \v{C}ech $2$-cocycle
$c'' = \{ g''_{k_0, k_1, k_2} \}$.
However, it is easy to see that 
\[ g''_{k_0, k_1, k_2} = g_{k_0}^{-1} \circ g_{k_0, k_1, k_2} \circ
g_{k_0} . \]
Since $g_{k_0, k_1, k_2}$ is central in $\gerbe{G}$ it follows
that in fact
$g''_{k_0, k_1, k_2} =  g_{k_0, k_1, k_2}$, so that $c'' = c$.
On the other hand, from step (a) we see that 
$[c''] = [c']$ in $\check{\mrm{H}}^2(X, \mcal{N})$. 

\medskip \noindent 
(c) Now suppose $\bsym{U}' = \bsym{U}$, but 
we choose another object
$i'_k \in \opn{Ob} \gerbe{G}(U_k)$ for each $k$. 
Take some open covering $\bsym{V} = \{ V_l \}_{l \in L}$ that refines 
$\bsym{U}$, with comparison function $\phi : L \to K$, such that for every 
$l \in L$ one has
\[ \gerbe{G}(V_l)(i_{\phi(l)}, i'_{\phi(l)}) \neq \emptyset . \]
This can be done because $\gerbe{G}$ is locally connected. 
After replacing $\bsym{U}$ with $\bsym{V}$, we can assume that there is some
$f_k \in \gerbe{G}(U_k)(i_k, i'_k)$ for every $k \in K$.

In view of steps (a-b) we might as well take
\[ h'_k := h_k \circ F(f_k)^{-1} \in 
\gerbe{H}(U_k) \bigl( F(i'_k), j \bigr) , \]
and then lift
\begin{equation}
h'_{k_0, k_1} := h_{k_1}'^{-1} \circ h'_{k_0} 
\in \gerbe{H}(U_{k_0, k_1}) \bigl( F(i'_{k_0}), F(i'_{k_1}) \bigr)
\end{equation}
to
\[ g_{k_0, k_1}' := f_{k_1} \circ g_{k_0, k_1} \circ f_{k_0}^{-1} 
\in \gerbe{G}(U_{k_0, k_1})(i'_{k_0}, i'_{k_1}) . \]
The resulting $2$-cocycle $c' = \{ g'_{k_0, k_1, k_2} \}$ defined as in 
(\ref{eqn:4}) will satisfy
\[ g'_{k_0, k_1, k_2} = f_{k_0}^{-1}  \circ 
g_{k_0, k_1, k_2} \circ f_{k_0} . \]
Because $\mcal{N}$ is central we get $c' = c$.

\medskip \noindent 
(d) Finally let's see what happens when we take a new open covering
$\bsym{U}' = \{ U_k \}_{k \in K'}$ of $X$, for which we can construct a
cocycle $c'$. Let $\bsym{V} = \{ V_l \}_{l \in L}$ be a common refinement,
namely there are functions $\phi : L \to K$ and $\phi' : L \to K'$, such that
$V_l \subset U_{\phi(l)}$ and $V_l \subset U'_{\phi'(l)}$ for all $l \in L$. 
Let $\phi^*(c)$ and $\phi'^*(c')$ be the pullback $2$-cocycles on the open
covering $\bsym{V}$. These are both cocycles that are constructed like in
Construction \ref{const1}, for the obvious choices of objects etc. By steps
(a-c) we know that $[\phi^*(c)] = [\phi'^*(c')]$. But on the other hand
$[\phi^*(c)] = [c]$ and $[\phi'^*(c')] = [c']$.
\end{proof}

The lemma justifies the next definition.

\begin{dfn} \label{dfn.4}
Let $j \in \opn{Ob} \gerbe{H}(X)$.
If there exists a $2$-cocycle $c$ as in Construction
\tup{\ref{const1}}, for some open covering $\bsym{U}$, then we
define the {\em obstruction class to lifting objects} to be
\[ \opn{cl}^2_F(j) := [c] \in \check{\mrm{H}}^2(X, \mcal{N}) . \]
Otherwise we say that this obstruction class is undefined.
\end{dfn}

In Section \ref{sec.suff} we shall see sufficient conditions for the
obstruction class $\opn{cl}^2_F(j)$ to be defined.

\begin{prop} \label{prop:10}
Let $j \in \opn{Ob} \gerbe{H}(X)$ be such that the obstruction class
$\opn{cl}^2_F(j)$ is defined. 
Suppose $j' \in \opn{Ob} \gerbe{H}(X)$ is such that
$\gerbe{H}(X)(j,j') \neq \emptyset$. Then
the obstruction class $\opn{cl}_F^2(j')$ is also defined, and moreover
\[ \opn{cl}^2_F(j') = \opn{cl}_F^2(j) . \]
\end{prop}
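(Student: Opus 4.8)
The plan is to reuse the very data that computes $\opn{cl}^2_F(j)$, twisting only the ``top'' isomorphisms by a chosen global isomorphism $j \to j'$, and to observe that all of the transition data is left untouched. Concretely, I would fix an isomorphism $a \in \gerbe{H}(X)(j, j')$, which exists by hypothesis. Since $\opn{cl}^2_F(j)$ is defined, Construction \ref{const1} succeeds for $j$: there is an open covering $\bsym{U} = \{ U_k \}_{k \in K}$, objects $i_k \in \opn{Ob} \gerbe{G}(U_k)$, isomorphisms $h_k \in \gerbe{H}(U_k)(F(i_k), j)$, and liftings $g_{k_0, k_1} \in \gerbe{G}(U_{k_0, k_1})(i_{k_0}, i_{k_1})$ of $h_{k_0, k_1} := h_{k_1}^{-1} \circ h_{k_0}$, producing the cocycle $c = \{ g_{k_0, k_1, k_2} \}$.

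Keeping the same covering, the same objects $i_k$, and the same liftings $g_{k_0, k_1}$, I would set $h'_k := a|_{U_k} \circ h_k \in \gerbe{H}(U_k)(F(i_k), j')$. This already exhibits valid choices for Construction \ref{const1} applied to $j'$, so $\opn{cl}^2_F(j')$ is defined. The one computation that matters is that the transition isomorphisms are unchanged:
\[ h'_{k_0, k_1} = (a|_{U_{k_0, k_1}} \circ h_{k_1})^{-1} \circ (a|_{U_{k_0, k_1}} \circ h_{k_0}) = h_{k_1}^{-1} \circ h_{k_0} = h_{k_0, k_1}, \]
the middle cancellation being $a^{-1} \circ a = 1$. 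Consequently $F(g_{k_0, k_1}) = h_{k_0, k_1} = h'_{k_0, k_1}$, so the same $g_{k_0, k_1}$ remain legitimate liftings for the $j'$-construction, and the resulting $2$-cocycle $c'$ is given by the identical formula $g'_{k_0, k_1, k_2} = g_{k_0, k_2}^{-1} \circ g_{k_1, k_2} \circ g_{k_0, k_1}$. Hence $c' = c$ term by term.

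To conclude, since $c'$ arises from Construction \ref{const1} for $j'$, by Definition \ref{dfn.4} one has $\opn{cl}^2_F(j') = [c']$; and Lemma \ref{lem.4} guarantees that this class is independent of all auxiliary choices. Therefore $\opn{cl}^2_F(j') = [c'] = [c] = \opn{cl}^2_F(j)$.

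I do not expect a genuine obstacle here: the entire content is the cancellation displayed above, which forces the two constructions to output literally the same cochain. The only point requiring a little care is to insist on using the \emph{same} covering, objects, and liftings throughout, so that Lemma \ref{lem.4} is invoked merely to certify that the obstruction classes are independent of the auxiliary data, rather than being needed for any nontrivial comparison between distinct cocycles.
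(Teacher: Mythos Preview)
Your proposal is correct and is essentially identical to the paper's proof: both choose a global isomorphism $j \to j'$, replace $h_k$ by its composite with this isomorphism, observe that the transition data $h_{k_0,k_1}$ is unchanged, and conclude that the same liftings $g_{k_0,k_1}$ produce literally the same $2$-cocycle.
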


What the proposition says is that two isomorphic objects have the same 
obstruction class. 

\begin{proof}
We want to construct a \v{C}ech $2$-cocycle $c'$, starting with $j'$ 
instead of $j$. Take any $f \in \gerbe{H}(X)(j, j')$. 
Using this isomorphism we may define
\[ h'_k := f \circ h_k \in \gerbe{H}(U_k)(F(i_k), j') , \]
where $i_k$ is the lifting of $j$ that was used in the construction of $c$,
and $h_k \in \gerbe{H}(U_k)(F(i_k), j)$ is the isomorphism that was
chosen. 

Let 
\[ h'_{k_0, k_1} := h_{k_1}'^{-1} \circ h_{k_0}'
 \in \gerbe{H}(U_{k_0, k_1})(F(i_{k_0}), F(i_{k_1})) . \]
Then $h'_{k_0, k_1} = h_{k_0, k_1}$; so continuing with 
Construction \ref{const1} we get a cocycle $c'$ that equals $c$. 
\end{proof}

\begin{thm}[Obstruction to lifting objects] \label{thm.2}
Consider a central extension of gerbes 
\[ 1 \to  \mcal{N} \to \gerbe{G} \xar{F} \gerbe{H} \to 1 . \]
Let $j \in \opn{Ob} \gerbe{H}(X)$ be such that
the obstruction class $\opn{cl}_F^2(j)$ is defined. Then 
there exists an object 
$i \in \opn{Ob} \gerbe{G}(X)$ with 
\[ \gerbe{H}(X) \bigl( F(i), j \bigr) \neq \emptyset \]
if and only if
\[ \opn{cl}_F^2(j) = 1 . \]
\end{thm}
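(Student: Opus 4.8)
The plan is to prove the two implications separately; the forward (necessity) direction is essentially formal, while the converse (sufficiency) direction carries the real content.

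For the easy direction, suppose there is a global object $i \in \opn{Ob} \gerbe{G}(X)$ together with an isomorphism $h \in \gerbe{H}(X)\bigl(F(i), j\bigr)$. Since the obstruction class does not depend on the choices made in Construction \ref{const1} (this is exactly Lemma \ref{lem.4}), I would compute it using the one-element covering $K = \{0\}$, $U_0 := X$, with $i_0 := i$ and $h_0 := h$. Then $h_{0,0} = \bsym{1}_{F(i)}$, so I may take $g_{0,0} := \bsym{1}_i$, whence $g_{0,0,0} = \bsym{1}$. The resulting cocycle is trivial, so $\opn{cl}^2_F(j) = 1$.

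For the converse, assume $\opn{cl}^2_F(j) = 1$ and start from construction data: a covering $\bsym{U} = \{U_k\}$, objects $i_k$, isomorphisms $h_k : F(i_k) \iso j$, and lifts $g_{k_0,k_1} : i_{k_0} \iso i_{k_1}$ of $h_{k_0,k_1} = h_{k_1}^{-1}\circ h_{k_0}$, producing the $2$-cocycle $c = \{ g_{k_0,k_1,k_2} \}$. Since $[c] = 1$, after passing to a refinement I may assume $c = \d(b)$ for a $1$-cochain $b = \{ n_{k_0,k_1} \}$ with values in $\mcal{N}$; that is, $g_{k_0,k_1,k_2}$ equals the \v{C}ech coboundary $(\d b)_{k_0,k_1,k_2}$. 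The key move is to correct the lifts by setting $g'_{k_0,k_1} := n_{k_0,k_1}^{-1} \circ g_{k_0,k_1}$, which are again lifts of $h_{k_0,k_1}$ because $F(n_{k_0,k_1}) = 1$. Running the central computation of step (a) of Lemma \ref{lem.4} with $n_{k_0,k_1}$ replaced by $n_{k_0,k_1}^{-1}$, and using that $\mcal{N}$ is central in $\gerbe{G}$, the triple-overlap element becomes trivial: $g'_{k_0,k_2}{}^{-1} \circ g'_{k_1,k_2} \circ g'_{k_0,k_1} = \bsym{1}$, i.e.\ the corrected family satisfies the descent cocycle condition $g'_{k_1,k_2} \circ g'_{k_0,k_1} = g'_{k_0,k_2}$ on overlaps. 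I would then invoke descent for objects (condition (b) for the stack $\gerbe{G}$) to obtain a global object $i \in \opn{Ob}\gerbe{G}(X)$ together with isomorphisms $g_k : i|_{U_k} \iso i_k$ satisfying $g'_{k_0,k_1} \circ g_{k_0} = g_{k_1}$. To finish, the candidate local isomorphisms $h_k \circ F(g_k) : F(i)|_{U_k} \iso j|_{U_k}$ are sections of the sheaf $\gerbe{H}\bigl(F(i), j\bigr)$; applying $F$ to $g'_{k_0,k_1} = g_{k_1}\circ g_{k_0}^{-1}$ and using $F(g'_{k_0,k_1}) = F(g_{k_0,k_1}) = h_{k_1}^{-1}\circ h_{k_0}$ gives $F(g_{k_1}) \circ F(g_{k_0})^{-1} = h_{k_1}^{-1}\circ h_{k_0}$, which rearranges to $h_{k_1}\circ F(g_{k_1}) = h_{k_0}\circ F(g_{k_0})$ on $U_{k_0,k_1}$. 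Hence these local isomorphisms agree on overlaps and, by descent for morphisms in the stack $\gerbe{H}$, glue to a global $h \in \gerbe{H}(X)\bigl(F(i), j\bigr)$, so $\gerbe{H}(X)\bigl(F(i),j\bigr) \neq \emptyset$.

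The main obstacle is the converse direction, and within it the bookkeeping of the single correction $g'_{k_0,k_1} := n_{k_0,k_1}^{-1}\circ g_{k_0,k_1}$, which must do two jobs at once: it must trivialize the triple-overlap cocycle (so that descent for objects applies) while leaving $F(g'_{k_0,k_1}) = h_{k_0,k_1}$ unchanged (so that the glued object's image stays compatible with the $h_k$, making the local isomorphisms $h_k\circ F(g_k)$ match on overlaps). Both facts rest on $\mcal{N}$ being central, exactly as in the coboundary computation of Lemma \ref{lem.4}(a); the only genuinely new ingredient beyond that lemma is the concluding gluing of $F(i) \iso j$ via descent for morphisms.
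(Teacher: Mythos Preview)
Your proof is correct and follows essentially the same approach as the paper: for the forward direction you compute the class with a trivializing choice of data (you use the one-element covering, the paper restricts a given covering, but the effect is identical), and for the converse you correct the lifts $g_{k_0,k_1}$ by the trivializing $1$-cochain, invoke descent for objects in $\gerbe{G}$, and then glue the local isomorphisms $h_k \circ F(g_k)$ using descent for morphisms in $\gerbe{H}$. The only differences are cosmetic (left versus right multiplication by central elements, and the direction of the gluing isomorphisms).
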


What the theorem says is that $\opn{cl}_F^2(j)$ is the obstruction to lifting 
$j$ to an object of $\gerbe{G}(X)$.

\begin{proof}
Assume $j$ lifts to an object $i \in \opn{Ob} \gerbe{G}(X)$.
So there exists some isomorphism
$h \in \gerbe{H}(X)(F(i), j)$.
In Construction \ref{const1} we may choose
$i_k := i|_{U_k} \in \opn{Ob} \gerbe{G}(U_k)$. 
Having done so, we take
\[ h_k := h|_{U_k} \in \gerbe{H}(U_k)(F(i_k), j) . \]
Proceeding with the construction, we get
\[ h_{k_0, k_1} = 1 \in 
\gerbe{G}(U_{k_0, k_1})(F(i), F(i)) , \]
which can then be lifted to
\[ g_{k_0, k_1} = 1 \in \gerbe{G}(U_{k_0, k_1})(i, i)  . \]
The resulting $2$-cocycle $c = \{ g_{k_0, k_1, k_2} \}$ is trivial.

Conversely, suppose $\opn{cl}^2_F(j) = 1$. From 
construction \ref{const1} and the choices made there we get a a 
$2$-cocycle $c = \{ g_{k_0, k_1, k_2} \}$ with values in $\mcal{N}$,
on some open covering $\bsym{U}$. By replacing $\bsym{U}$ with a suitable
refinement, we may assume it is a coboundary. Namely there is a $1$-cochain 
$b = \{ f_{k_0, k_1} \}$ with values in $\mcal{N}$, such that 
$c = \d (b)$. 

Consider the isomorphisms
\[ g'_{k_0, k_1} := g_{k_0, k_1} \circ f_{k_0, k_1}^{-1}
\in \gerbe{G}(U_{k_0, k_1})(i_{k_0}, i_{k_1}) , \] 
where $g_{k_0, k_1}$ are the isomorphisms chosen when constructing the cocycle
$c$. 
Then $\{ g_{k_0, k_1}' \}$ is a $1$-cocycle. Since $\gerbe{G}$ is a 
stack, the collection of objects $\{ i_k \}_{k \in K}$ can be glued. I.e.\
there is an object
$i \in \opn{Ob} \gerbe{G}(X)$, and isomorphisms
$g'_k \in \gerbe{G}(U_{k})(i_{k}, i)$,
such that
\[ g_{k_1}'^{-1} \circ g_{k_0}' = g_{k_0, k_1}'  . \]
Define
\[ e_{k} := F(g'_k) \circ h_k^{-1} \in 
\gerbe{H}(U_k) \bigl( j, F(i) \bigr) . \]
Then one checks that 
\[ e_{k_0} = e_{k_1} \in \gerbe{H}(U_{k_0, k_1}) \bigl(j, F(i) \bigr) . \]
The sheaf property says that these glue to an isomorphism
$e \in \gerbe{H}(X) \bigl( j, F(i) \bigr)$.
\end{proof}

\begin{rem}
If we were to use open hypercoverings in Construction \ref{const1},
then the obstruction class $\opn{cl}^2_F(j)$ would always be defined,
as an element of $\mrm{H}^2(X, \mcal{N})$. However the technicalities
involved in proving the corresponding version of Theorem \ref{thm.2}
would be enormous. Since Construction \ref{const1} works in the cases
that interest us, we chose to limit ourselves to this weaker
approach.
\end{rem}

\begin{rem}
L. Breen [private communication] proposed looking at the
the central extension of gerbes (\ref{eqn:5}) in the following way:
$\gerbe{G}$ is a gerbe over $\gerbe{H}$, with band $\mcal{N}$. 
Perhaps this point of view can yield a stronger version of
Theorem \ref{thm.2}.
\end{rem}

\section{Sufficient Conditions for Existence of Obstruction Classes}
\label{sec.suff}

Let $\mcal{N}$ be a sheaf of abelian groups on a topological space
$X$. The operation in $\mcal{N}$ is multiplication.
We denote by $\mrm{H}^i (X, \mcal{N})$ the derived functor sheaf
cohomology. An open
set $U \subset X$ will be called {\em $\mcal{N}$-acyclic} if the sheaf
cohomology satisfies
$\mrm{H}^i (U, \mcal{N}) = 1$ for all $i > 0$. Now suppose 
$\bsym{U} = \{ U_k \}_{k \in K}$ is a collection of open sets in $X$. 
We say that the collection $\bsym{U}$ is $\mcal{N}$-acyclic if all the open 
sets $U_{k_0, \ldots, k_m}$ are $\mcal{N}$-acyclic.

\begin{dfn}
Let $\mcal{N}$ be a sheaf of abelian groups on $X$.
We say that {\em there are enough $\mcal{N}$-acyclic open coverings} if for 
any open set $U \subset X$, and any open covering $\bsym{U}$ of $U$,
there exists an $\mcal{N}$-acyclic open covering $\bsym{U}'$ of $U$ which
refines $\bsym{U}$. 
\end{dfn}

\begin{exa} \label{exa:12}
Suppose $X$ is a differentiable (i.e.\ $\mrm{C}^{\infty}$) manifold, and let
$\mcal{O}_X$ be the sheaf of $\mrm{C}^{\infty}$ $\mbb{R}$-valued functions on
it. If $\mcal{N}$ is an $\mcal{O}_X$-module, then any open covering of
$X$ is $\mcal{N}$-acyclic. If $\mcal{K}$ is a constant sheaf of
abelian groups on $X$, then any open covering $\bsym{U} = \{ U_k \}_{k
\in K}$ such that 
the finite intersections $U_{k_0, \ldots, k_m}$ are contractible, is 
$\mcal{K}$-acyclic. There are always enough coverings of this sort.
\end{exa}

\begin{exa} \label{exa.2}
Suppose $X$ is a complex analytic manifold, and let
$\mcal{O}_X$ be the sheaf of holomorphic $\mbb{C}$-valued functions on
it. If $\mcal{N}$ is a coherent $\mcal{O}_X$-module, then any open covering of
$X$ by Stein manifolds is $\mcal{N}$-acyclic. There are always enough coverings
of this sort. 
Regarding constant sheaves see the previous example. (Oddly, we do not know if 
it is possible to find an open covering 
$\bsym{U} = \{ U_k \}_{k \in K}$ such that 
the finite intersections $U_{k_0, \ldots, k_m}$ are both contractible and
Stein.)
\end{exa}

\begin{exa} \label{exa.3}
Suppose $X$ is an algebraic variety over a field $\K$ (i.e.\ a
separated integral finite type $\K$-scheme), and let
$\mcal{O}_X$ be the structure sheaf. If $\mcal{N}$ is a coherent
$\mcal{O}_X$-module, then any affine open covering of $X$ (i.e.\ a covering
$\bsym{U} = \{ U_k \}_{k \in K}$ such that the open sets $U_k$ are all affine)
is $\mcal{N}$-acyclic. There are always enough coverings of this sort.
If $\mcal{K}$ is a constant sheaf of abelian groups
on $X$, then any open covering of $X$ is
$\mcal{K}$-acyclic (since $\mcal{K}$ is a flasque sheaf in the Zariski
topology).
\end{exa}

Recall that there are canonical group homomorphisms
\[ \check{\mrm{H}}^i(X, \mcal{N}) \to \mrm{H}^i(X, \mcal{N}) , \]
which are bijective for $i = 0, 1$; see \cite[Section III.4.]{Ha}.

\begin{prop} \label{prop:11}
Let $\mcal{N}$ be a sheaf of abelian groups on $X$.
\begin{enumerate}
\item If $\bsym{U}$ is an $\mcal{N}$-acyclic open covering of $X$,
then the canonical group homomorphisms
\[ \check{\mrm{H}}^i(\bsym{U}, \mcal{N}) \to \mrm{H}^i(X, \mcal{N}) \]
are bijective for all $i$. 
\item If there are enough $\mcal{N}$-acyclic open coverings, then for
any $\mcal{N}$-acyclic open covering  $\bsym{U}$ of $X$,
the canonical group homomorphisms
\[ \check{\mrm{H}}^i(\bsym{U}, \mcal{N}) \to
\check{\mrm{H}}^i(X, \mcal{N}) \to \mrm{H}^i(X, \mcal{N}) \]
are bijective for all $i$. 
\end{enumerate}
\end{prop}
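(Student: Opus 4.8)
The plan is to deduce both parts from the standard first-quadrant spectral sequence relating \v{C}ech cohomology of a fixed covering to derived functor cohomology, and then to obtain part (2) by a cofinality argument. Throughout, write $\mcal{H}^q$ for the presheaf of abelian groups on $X$ given by $V \mapsto \mrm{H}^q(V, \mcal{N})$; note that $\mcal{H}^0$ is just the sheaf $\mcal{N}$ itself, since $\mrm{H}^0(V, \mcal{N}) = \mcal{N}(V)$.

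For part (1), I would invoke the \v{C}ech-to-derived-functor spectral sequence
\[ E_2^{p,q} = \check{\mrm{H}}^p(\bsym{U}, \mcal{H}^q) \Rightarrow \mrm{H}^{p+q}(X, \mcal{N}) , \]
a classical fact found in standard references such as Godement's book. The key point is that $E_2^{p,q}$ is the $p$-th cohomology of the \v{C}ech complex of the presheaf $\mcal{H}^q$, whose term in cochain degree $p$ is a product of the groups $\mrm{H}^q(U_{k_0, \ldots, k_p}, \mcal{N})$. Since $\bsym{U}$ is $\mcal{N}$-acyclic, every intersection $U_{k_0, \ldots, k_p}$ satisfies $\mrm{H}^q(U_{k_0, \ldots, k_p}, \mcal{N}) = 1$ for $q > 0$; hence the entire \v{C}ech complex of $\mcal{H}^q$ is trivial for $q > 0$, so $E_2^{p,q} = 1$ there. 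The spectral sequence therefore collapses onto its bottom row, yielding $\mrm{H}^p(X, \mcal{N}) \cong E_2^{p,0} = \check{\mrm{H}}^p(\bsym{U}, \mcal{N})$. Finally I would record that, under this collapse, the resulting isomorphism is exactly the edge homomorphism of the spectral sequence, which is the canonical comparison map $\check{\mrm{H}}^p(\bsym{U}, \mcal{N}) \to \mrm{H}^p(X, \mcal{N})$; this identifies the isomorphism with the map in the statement.

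For part (2), recall that $\check{\mrm{H}}^i(X, \mcal{N})$ is by definition the direct limit of the groups $\check{\mrm{H}}^i(\bsym{V}, \mcal{N})$ over all open coverings $\bsym{V}$ of $X$, ordered by refinement, and that for each $\bsym{V}$ the comparison map to $\mrm{H}^i(X, \mcal{N})$ factors through the structure map into this limit. The hypothesis that there are enough $\mcal{N}$-acyclic coverings says precisely that the $\mcal{N}$-acyclic coverings of $X$ form a cofinal subsystem, so the limit may be computed over them alone. For every such acyclic $\bsym{V}$, part (1) gives an isomorphism $\check{\mrm{H}}^i(\bsym{V}, \mcal{N}) \iso \mrm{H}^i(X, \mcal{N})$, and since these comparison maps are compatible with refinement, whenever $\bsym{V}'$ refines $\bsym{V}$ the refinement map $\check{\mrm{H}}^i(\bsym{V}, \mcal{N}) \to \check{\mrm{H}}^i(\bsym{V}', \mcal{N})$ sits in a commuting triangle with these two isomorphisms to $\mrm{H}^i(X, \mcal{N})$, hence is itself bijective. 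Thus the cofinal direct system has all transition maps bijective, so the structure map from any single term — in particular from the given acyclic $\bsym{U}$ — into the direct limit is bijective. This gives the first bijection $\check{\mrm{H}}^i(\bsym{U}, \mcal{N}) \iso \check{\mrm{H}}^i(X, \mcal{N})$, and composing with part (1) yields the second.

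The step I expect to be the main obstacle is the careful identification in part (1) of the spectral sequence edge homomorphism with the canonical comparison map $\check{\mrm{H}}^p(\bsym{U}, \mcal{N}) \to \mrm{H}^p(X, \mcal{N})$, together with the compatibility of this comparison map with refinements that is needed in part (2). Both are classical but slightly technical facts about the construction of the spectral sequence. If one prefers to avoid them, part (1) can instead be proved by induction on $i$ via dimension shifting, embedding $\mcal{N}$ into a flasque sheaf and using the long exact sequences for \v{C}ech and derived cohomology; the price is a more delicate bookkeeping argument to propagate acyclicity of $\bsym{U}$ through the short exact sequence.
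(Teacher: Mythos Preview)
Your proposal is correct; the spectral sequence argument for part~(1) and the cofinality argument for part~(2) are exactly the standard proofs. The paper itself does not give an argument: it simply cites \cite[Exercise III.4.11]{Ha} for part~(1), says part~(2) follows from~(1), and refers to \cite{Gr2}, so you have supplied precisely the content those references contain.
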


\begin{proof}
Assertion (1) is \cite[Exercise III.4.11]{Ha}. Assertion (2) follows
from (1). See also the original \cite{Gr2}.
\end{proof}

 From now on in this section, the operation in the group $\mcal{N}$ is
multiplication, and the identity element is $1$. 

\begin{prop} \label{prop:9}
Suppose
\begin{equation} \label{eqn:15}
1 \to \mcal{N} \to \mcal{G} \to \mcal{H} \to 1 
\end{equation}
is an exact sequence of sheaves of groups on $X$. 
\begin{enumerate}
\item There is an exact sequence in \v{C}ech cohomology
\[ \begin{aligned}
& 1 \to \mcal{N}(X) \to \mcal{G}(X) \to \mcal{H}(X) \\
& \qquad \to \check{\mrm{H}}^1(X, \mcal{N}) \to \check{\mrm{H}}^1(X, \mcal{G}) 
\to \check{\mrm{H}}^1(X, \mcal{H}) .
\end{aligned} \]
Here $\check{\mrm{H}}^1(X, -)$ are pointed sets. 
\item Assume \tup{(\ref{eqn:15})} is a central extension, and there are enough
$\mcal{N}$-acyclic open coverings. Then the exact sequence of part \tup{(1)}
extends to an exact sequence
\[ \cdots \to \check{\mrm{H}}^1(X, \mcal{G}) 
\to \check{\mrm{H}}^1(X, \mcal{H}) \xar{\partial} \check{\mrm{H}}^2(X, \mcal{N})
. \]
\end{enumerate}
\end{prop}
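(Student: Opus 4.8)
The plan is to treat the two parts by different means: part (1) is the standard nonabelian \v{C}ech sequence and needs nothing special, while part (2) is most efficiently obtained by reducing to the obstruction theory for the gerbe of torsors that the paper has already set up. For part (1), the only map requiring construction is the connecting homomorphism $\delta \colon \mcal{H}(X) \to \check{\mrm{H}}^1(X, \mcal{N})$. Given $h \in \mcal{H}(X)$, I would use that $\mcal{G} \to \mcal{H}$ is an epimorphism of sheaves to choose an open covering $\bsym{U} = \{ U_k \}$ and local lifts $g_k \in \mcal{G}(U_k)$ of $h|_{U_k}$; then $g_{k_0, k_1} := g_{k_1}^{-1} \circ g_{k_0}$ maps to $1$ in $\mcal{H}$, hence lies in $\mcal{N}(U_{k_0, k_1})$, and $\{ g_{k_0, k_1} \}$ is a \v{C}ech $1$-cocycle whose class is $\delta(h)$ (the degree-one analogue of Construction \ref{const2}). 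Independence of the choices, and exactness at each of the four interior terms, is then a routine diagram chase carried out with the usual care about basepoints of the pointed sets $\check{\mrm{H}}^1(X, -)$; centrality is not used here, and this part is classical and could be cited from \cite{Gi}.

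For part (2) the efficient route is to identify the sequence with the obstruction theory of the gerbe of torsors. By Example \ref{exa:16} the set $\check{\mrm{H}}^1(X, \mcal{H})$ is the set of isomorphism classes of objects of the groupoid $(\twocat{Tors} \mcal{H})(X)$, and similarly for $\mcal{G}$; and by Example \ref{exa:27} the central extension (\ref{eqn:15}) induces a central extension of gerbes $1 \to \mcal{N} \to \twocat{Tors} \mcal{G} \xar{F} \twocat{Tors} \mcal{H} \to 1$. I would then define $\partial$ on a class $[\mcal{T}] \in \check{\mrm{H}}^1(X, \mcal{H})$, represented by an $\mcal{H}$-torsor $\mcal{T}$ regarded as an object $j$ of $(\twocat{Tors} \mcal{H})(X)$, to be the obstruction class $\opn{cl}^2_F(\mcal{T})$ of Definition \ref{dfn.4}. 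The point is that Construction \ref{const1} always succeeds for the torsor gerbe: torsors are locally trivial, so local lifts $i_k$ (trivial $\mcal{G}$-torsors) with trivializations $h_k$ exist, and $F$ is locally surjective on isomorphism sheaves, so the $h_{k_0, k_1}$ lift to $g_{k_0, k_1}$ after refining. Centrality of $\mcal{N}$ then makes $\{ g_{k_0, k_1, k_2} \}$ a genuine $2$-cocycle in $\mcal{N}$ (Lemma \ref{lem:10}), and by Proposition \ref{prop:10} its class depends only on the isomorphism class of $\mcal{T}$, so $\partial$ descends to a well-defined map on $\check{\mrm{H}}^1(X, \mcal{H})$.

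Exactness at $\check{\mrm{H}}^1(X, \mcal{H})$ is then immediate from Theorem \ref{thm.2}: $\partial[\mcal{T}] = 1$ if and only if $\mcal{T}$ lifts to an object of $(\twocat{Tors} \mcal{G})(X)$, i.e.\ to a $\mcal{G}$-torsor $\mcal{S}$ with $F(\mcal{S}) \cong \mcal{T}$, which is exactly the statement that $[\mcal{T}]$ lies in the image of $\check{\mrm{H}}^1(X, \mcal{G}) \to \check{\mrm{H}}^1(X, \mcal{H})$. The role of the acyclicity hypothesis, and the step I expect to be the main obstacle, is the \v{C}ech-level bookkeeping rather than the exactness itself: Construction \ref{const1} produces a cocycle only on some refinement, so to know that $\partial$ genuinely takes values in the colimit $\check{\mrm{H}}^2(X, \mcal{N})$ consistently, I would use Proposition \ref{prop:11} to replace an arbitrary covering by an $\mcal{N}$-acyclic refinement on which $\check{\mrm{H}}^2(\bsym{U}, \mcal{N}) \iso \check{\mrm{H}}^2(X, \mcal{N})$. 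The delicate verification is that the obstruction class $\opn{cl}^2_F$ really agrees, as an element of $\check{\mrm{H}}^2(X, \mcal{N})$, with the connecting map one writes down directly from a bare $\mcal{H}$-cocycle $\{ h_{k_0, k_1} \}$, since the two are phrased via superficially different choices (objects $i_k$ and isomorphisms $h_k$ versus the cocycle alone); matching them is where one must be careful, and where the invariance statements of the previous section (Lemma \ref{lem.4} and Proposition \ref{prop:10}) do the real work.
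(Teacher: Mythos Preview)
Your approach is correct and matches the paper's in substance. For part (1) the paper simply cites \cite{Gr1}; your sketch of $\delta$ is the standard construction and is fine. For part (2) the paper carries out by hand exactly the computation you package via Example \ref{exa:27}: it takes an $\mcal{H}$-torsor $\mcal{S}$, trivializes it on an $\mcal{N}$-acyclic covering, lifts the transition functions $h_{k_0,k_1}$ to $g_{k_0,k_1} \in \mcal{G}(U_{k_0,k_1})$, and defines $\partial(\mcal{S})$ as the class of the resulting $2$-cocycle $\{ g_{k_0,k_2}^{-1} \circ g_{k_1,k_2} \circ g_{k_0,k_1} \}$, referring back to Lemmas \ref{lem:10}, \ref{lem.4} and Theorem \ref{thm.2} just as you do. So the logical skeleton is identical; you are simply observing that this \emph{is} $\opn{cl}^2_F$ for the torsor gerbe, which is a cleaner way to say it.

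One point where the paper is sharper: you use local surjectivity of $F$ on isomorphism sheaves to lift the $h_{k_0,k_1}$ \emph{after refinement}, and then invoke acyclicity somewhat vaguely for ``\v{C}ech-level bookkeeping''. The paper instead uses acyclicity directly and earlier: since $\check{\mrm{H}}^1(U_{k_0,k_1}, \mcal{N}) = 1$, part (1) gives that $\mcal{G}(U_{k_0,k_1}) \to \mcal{H}(U_{k_0,k_1})$ is already surjective on sections, so the lifts $g_{k_0,k_1}$ exist on the original covering with no refinement needed. This is where the hypothesis is actually doing work, and it makes the construction cleaner than your refinement argument (though yours also goes through). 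Your last paragraph about matching $\opn{cl}^2_F$ with the bare-cocycle connecting map is a non-issue: once you trivialize $\mcal{S}$ by sections $s_k$, the objects $i_k$ are just the trivial $\mcal{G}|_{U_k}$-torsors and the $h_k$ are the trivializations, so the two descriptions coincide on the nose.
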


\begin{proof}
(1) This is pretty easy. A readable proof can be found in \cite[Chapter V]{Gr1}.

\medskip \noindent
(2) A more general result is \cite[Corollaire to Proposition 3.4.2]{Gr2},
where there is no topological assumption of the sheaf $\mcal{N}$.
However, the precise statement and the proof rely on Godement
resolutions, and are hard to follow. Hence we
provide a relatively easy proof in the case we need. 

Recall that the pointed set $\check{\mrm{H}}^1(X, \mcal{H})$ classifies left 
$\mcal{H}$-torsors on $X$, up to isomorphism. And the function
$\check{\mrm{H}}^1(X, \mcal{G}) \to \check{\mrm{H}}^1(X, \mcal{H})$
sends a $\mcal{G}$-torsor to the induced $\mcal{H}$-torsor.

Let $\mcal{S}$ be an $\mcal{H}$-torsor. 
Choose an $\mcal{N}$-acyclic open covering 
$\bsym{U} = \{ U_k \}_{k \in K}$ of $X$ that trivializes $\mcal{S}$. 
For any index $k$ choose some
$s_k \in \mcal{S}(U_k)$. For any $k_0, k_1$ we have an element
$h_{k_0, k_1} \in \mcal{H}(U_{k_0, k_1})$ such that
$s_{k_1} = h_{k_0, k_1} \cdot s_{k_0}$.
Since 
$\check{\mrm{H}}^1(U_{k_0, k_1}, \mcal{N}) = 1$, by part (1) we have a
surjection of groups
$\mcal{G}(U_{k_0, k_1}) \to \mcal{H}(U_{k_0, k_1})$,
and thus we can lift $h_{k_0, k_1}$ to some
$g_{k_0, k_1} \in \mcal{G}(U_{k_0, k_1})$.
Define
\[ n_{k_0, k_1, k_2} := g_{k_0, k_2}^{-1} \circ g_{k_1, k_2} \circ
g_{k_0, k_1} \in \mcal{G}(U_{k_0, k_1, k_2}) . \]
Then 
\[ c := \{ n_{k_0, k_1, k_2} \}_{k_0, k_1, k_2 \in K} \]
is a \v{C}ech $2$-cocycle with values in $\mcal{N}$; cf.\
Lemma \ref{lem:10}. Let
\[ \partial(\mcal{S}) := [c] \in \check{\mrm{H}}^2(X, \mcal{N}) . \]
As in the proof of Lemma \ref{lem.4} we see that the cohomology class 
$\partial(\mcal{S})$ is independent of choices, and thus we get a well defined
function
\[ \partial : \check{\mrm{H}}^1(X, \mcal{H}) \to \check{\mrm{H}}^2(X, \mcal{N})
. \]
And like in the proof of Theorem \ref{thm.2} we see that
$\partial(\mcal{S}) = 1$ if and only if $\mcal{S}$ comes from a 
$\mcal{G}$-torsor. 
\end{proof}

Consider a central extension of gerbes
\begin{equation} \label{eqn:9}
1 \to  \mcal{N} \to \gerbe{G} \xar{F} \gerbe{H} \to 1 
\end{equation}
on $X$.

\begin{lem} \label{lem.A.1}
Suppose $U$ is an $\mcal{N}$-acyclic open set. Let
$i, j \in \opn{Ob} \gerbe{G}(U)$ be such that 
$\gerbe{G}(U)(i, j) \neq \emptyset$. Then the function
\[ F : \gerbe{G}(U)(i, j) \to 
\gerbe{H}(U) \bigl( F(i), F(j) \bigr) \]
is surjective.
\end{lem}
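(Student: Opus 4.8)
The plan is to use the hypothesis $\gerbe{G}(U)(i, j) \neq \emptyset$ to reduce surjectivity of $F$ on the bitorsor $\gerbe{G}(i,j)$ to surjectivity of $F$ on the sheaf of automorphism groups $\gerbe{G}(i,i)$, and then to lift automorphisms via the long exact sequence of Proposition \ref{prop:9} together with the $\mcal{N}$-acyclicity of $U$. Concretely, fix an isomorphism $g_0 \in \gerbe{G}(U)(i, j)$, which exists by assumption. Given any $h \in \gerbe{H}(U)\bigl( F(i), F(j) \bigr)$, form the automorphism
\[ a := F(g_0)^{-1} \circ h \in \gerbe{H}(U)\bigl( F(i), F(i) \bigr) . \]
If we can produce $\til{a} \in \gerbe{G}(U)(i, i)$ with $F(\til{a}) = a$, then $g := g_0 \circ \til{a} \in \gerbe{G}(U)(i, j)$ satisfies $F(g) = F(g_0) \circ a = h$. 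Thus it suffices to prove that $F : \gerbe{G}(U)(i, i) \to \gerbe{H}(U)\bigl( F(i), F(i) \bigr)$ is surjective.

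Next I would observe that, since $\mcal{N}$ is central in $\gerbe{G}$, the morphism $F$ restricts to a central extension of sheaves of groups on $U$,
\[ 1 \to \mcal{N}|_U \to \gerbe{G}(i, i) \xar{F} \gerbe{H}\bigl( F(i), F(i) \bigr) \to 1 . \]
Here surjectivity of $F$ on the right is exactly condition (ii) in the definition of a weak epimorphism (surjectivity on isomorphism sheaves, specialized to the case $i = j$), while the identification of the kernel with $\mcal{N}|_U$ comes from $\gerbe{N}(i,i) = \opn{Ker}(F)(i,i)$ together with the isomorphism $\chi_i$ of Proposition \ref{prop:12}.

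Now I apply Proposition \ref{prop:9}(1) to this sequence over the space $U$. Using $\Gamma(U, \gerbe{G}(i,i)) = \gerbe{G}(U)(i,i)$ and $\Gamma(U, \gerbe{H}(F(i),F(i))) = \gerbe{H}(U)(F(i),F(i))$, the relevant portion of the exact sequence reads
\[ \gerbe{G}(U)(i, i) \xar{F} \gerbe{H}(U)\bigl( F(i), F(i) \bigr) \xar{\partial} \check{\mrm{H}}^1(U, \mcal{N}) . \]
By exactness the image of $F$ equals the kernel of $\partial$, so it remains to check that $\partial$ is trivial. Since $U$ is $\mcal{N}$-acyclic we have $\mrm{H}^1(U, \mcal{N}) = 1$, and the canonical homomorphism $\check{\mrm{H}}^1(U, \mcal{N}) \to \mrm{H}^1(U, \mcal{N})$ is bijective; hence $\check{\mrm{H}}^1(U, \mcal{N}) = 1$ and $\partial$ is trivial. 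Therefore $F$ is surjective on automorphisms, and the reduction above finishes the proof.

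The only genuinely delicate point is verifying that the restricted sequence of sheaves of groups is exact \emph{as sheaves} (not merely on global sections): surjectivity on the right is precisely the weak-epimorphism property, and this is what makes Proposition \ref{prop:9} applicable; the remaining manipulations with $g_0$ are routine. I note in passing that one could bypass the reduction and apply Theorem \ref{thm.3} directly over $U$, since the obstruction $\opn{cl}^1_F(h)$ lies in $\check{\mrm{H}}^1(U, \mcal{N}) \cong \mrm{H}^1(U, \mcal{N}) = 1$ and so vanishes automatically; this shows that the hypothesis $\gerbe{G}(U)(i,j) \neq \emptyset$ is not strictly needed, but the argument through Proposition \ref{prop:9} keeps the cohomological input as elementary as possible.
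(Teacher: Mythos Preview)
Your proof is correct and follows essentially the same approach as the paper: reduce to the case $i = j$ via the trivialization afforded by $g_0$, then apply Proposition~\ref{prop:9}(1) to the central extension $1 \to \mcal{N}|_U \to \gerbe{G}(i,i) \to \gerbe{H}(F(i),F(i)) \to 1$ and use $\check{\mrm{H}}^1(U,\mcal{N}) = 1$. Your closing remark that Theorem~\ref{thm.3} would give the result without the hypothesis $\gerbe{G}(U)(i,j) \neq \emptyset$ is also correct, and indeed this is precisely what the paper establishes separately as Lemma~\ref{lem.A.2}.
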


\begin{proof}
Here both torsors 
$\gerbe{G}(i, j)$ and $\gerbe{H} \bigl( F(i), F(j) \bigr)$
are trivial over the respective sheaves of groups 
$\gerbe{G}(i, i)$ and $\gerbe{H} \bigl( F(i), F(i) \bigr)$;
so we may assume $i = j$. Since
$\check{\mrm{H}}^1(U, \mcal{N}) = 1$ the assertion follows from the exact
sequence in Proposition \ref{prop:9}(1), applied to the short exact sequence of
sheaves of groups
\[ 1 \to \mcal{N}|_U \to \gerbe{G}(i,i) \xar{F} \gerbe{H}(i,i) \to
1 . \]
\end{proof}

\begin{lem} \label{lem.A.2}
Suppose $U$ is an $\mcal{N}$-acyclic open set. Then for any
$i, j \in \opn{Ob} \gerbe{G}(U)$ the function
\[ F(i, j) : \gerbe{G}(U)(i, j) \to 
\gerbe{H}(U) \bigl( F(i), F(j) \bigr) \]
is surjective.
\end{lem}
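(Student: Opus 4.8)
The plan is to reduce everything to the obstruction theory of Section~3, restricted to the open set $U$. If $\gerbe{H}(U)\bigl(F(i), F(j)\bigr) = \emptyset$ there is nothing to prove, so I may assume it contains some isomorphism $h$, and the goal becomes to produce a preimage $g \in \gerbe{G}(U)(i,j)$ with $F(g) = h$ for every such $h$.

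First I would restrict the central extension (\ref{eqn:9}) to $U$, obtaining a central extension of gerbes $1 \to \mcal{N}|_U \to \gerbe{G}|_U \xar{F} \gerbe{H}|_U \to 1$ on the space $U$. Everything in Section~3 --- Construction~\ref{const2}, the obstruction class of Definition~\ref{dfn.6}, and Theorem~\ref{thm.3} --- is formulated over an arbitrary base space, so it applies verbatim with $U$ in place of $X$. For the given $h \in (\gerbe{H}|_U)(U)\bigl(F(i),F(j)\bigr) = \gerbe{H}(U)\bigl(F(i),F(j)\bigr)$ this furnishes an obstruction class $\opn{cl}^1_F(h) \in \check{\mrm{H}}^1(U, \mcal{N})$.

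The second step is to observe that this group is trivial. Since $U$ is $\mcal{N}$-acyclic we have $\mrm{H}^1(U, \mcal{N}) = 1$, and the canonical map $\check{\mrm{H}}^1(U, \mcal{N}) \to \mrm{H}^1(U, \mcal{N})$ is bijective (the comparison isomorphism recalled just before Proposition~\ref{prop:11}); hence $\check{\mrm{H}}^1(U, \mcal{N}) = 1$. Therefore $\opn{cl}^1_F(h) = 1$, and Theorem~\ref{thm.3}, applied to the restricted extension, produces $g \in \gerbe{G}(U)(i,j)$ with $F(g) = h$. As $h$ was arbitrary this is exactly the asserted surjectivity; equivalently, one now knows $\gerbe{G}(U)(i,j) \neq \emptyset$ and may finish by invoking Lemma~\ref{lem.A.1}.

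I do not expect a genuine obstacle here: the content is simply that $\opn{cl}^1_F$ is the complete obstruction to lifting an isomorphism, and that it lands in a cohomology group killed by acyclicity. The one point that needs to be stated with care is the first step --- that the obstruction-class machinery and Theorem~\ref{thm.3}, written for the ambient space, transfer to the open subset $U$ regarded as a base space in its own right. This transfer is purely formal, since the restriction of a central extension of gerbes to an open set is again one and all the relevant notions are local over the base.
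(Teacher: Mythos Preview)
Your proposal is correct and follows essentially the same approach as the paper: the paper's proof simply unpacks Construction~\ref{const2} and the argument of Theorem~\ref{thm.3} explicitly over $U$ (choosing local lifts $g_k$, forming the $1$-cocycle $g_{k_1}^{-1}\circ g_{k_0}$ in $\mcal{N}$, trivializing it since $\check{\mrm{H}}^1(U,\mcal{N})=1$, and gluing), then finishes via Lemma~\ref{lem.A.1} once $\gerbe{G}(U)(i,j)\neq\emptyset$ is known. You instead cite Theorem~\ref{thm.3} as a black box after restricting the central extension to $U$, which is a cleaner packaging of the same content.
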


\begin{proof}
If $\gerbe{H}(U) \bigl( F(i), F(j) \bigr) = \emptyset$ then there is
nothing to prove. So let us assume it is nonempty. We will prove that 
$\gerbe{G}(U) \bigl( i, j \bigr) \neq \emptyset$; and then the assertion
will follow by Lemma \ref{lem.A.1}.

Choose some $h \in \gerbe{H}(U) \bigl( F(i), F(j) \bigr)$. 
Let $\bsym{U} = \{ U_k \}_{k \in K}$ be an open covering of $U$, such that 
for any $k$ there exists an isomorphism 
$g_k \in \gerbe{G}(U_k)(i, j)$ lifting $h$.
This can be done. 
Now for $k_0, k_1 \in K$ define
\[ g_{k_0, k_1} := g_{k_1}^{-1} \circ g_{k_0} \in 
\gerbe{G}(U_{k_0, k_1})(i, j) . \]
Since $F(g_{k_0, k_1}) = 1$ we see that in fact
\[ g_{k_0, k_1} \in \mcal{N}(U_{k_0, k_1}) . \]
An easy calculation shows that the \v{C}ech $1$-cochain 
$\{ g_{k_0, k_1} \}_{k_0, k_1 \in K}$ is a cocycle. 
Since $\check{\mrm{H}}^1(U, \mcal{N}) = 1$, after possibly replacing
$\bsym{U}$ with a refinement, we can find a $0$-cochain
$\{ f_k \}_{k \in K}$ such that 
$g_{k_0, k_1} = f_{k_1}^{-1} \circ f_{k_0}$. 
Define
\[ g'_{k} := g_k \circ f_k^{-1} \in \gerbe{G}(U_{k})(i, j) . \]
Then the $0$-cochain $\{ g'_{k} \}_{k \in K}$ is a cocycle with values in the
sheaf of sets $\gerbe{G}(i, j)$. 
 From the sheaf property it follows that there is an element
$g' \in \gerbe{G}(U)(i, j)$ 
such that $g'|_{U_k} = g'_k$ for all $k$. We see that
$\gerbe{G}(U) \bigl( i, j \bigr) \neq \emptyset$.
\end{proof}

\begin{thm} \label{thm:10}
Consider the central extension of gerbes \tup{(\ref{eqn:9})}. If there are
enough $\mcal{N}$-acyclic open coverings, then the obstruction class
$\opn{cl}^2_F(j)$ from Definition \tup{\ref{dfn.4}} exists, for any
$j \in \opn{Ob} \gerbe{H}(X)$.
\end{thm}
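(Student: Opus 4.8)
The plan is to show that every ``if possible'' choice appearing in Construction \ref{const1} can actually be carried out on a single suitable open covering, so that the $2$-cocycle $c$ genuinely exists; by Definition \ref{dfn.4} this is exactly what it means for $\opn{cl}^2_F(j)$ to be defined. The three choices to be secured are: the local lifts $i_k$ of $j$, the isomorphisms $h_k$, and the lifts $g_{k_0, k_1}$ of the comparison isomorphisms $h_{k_0, k_1}$.

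First I would handle the object and $h_k$ data using only that $F$ is a weak epimorphism. Locally essential surjectivity on objects gives, for each point $x \in X$, an open neighborhood $V$, an object $i \in \opn{Ob} \gerbe{G}(V)$, and an isomorphism $F(i) \iso j|_V$ in $\gerbe{H}(V)$. Collecting these over all points produces an open covering $\bsym{V}$ of $X$ on which objects $i_k$ lifting $j$ and isomorphisms $h_k \in \gerbe{H}(V_k)\bigl(F(i_k), j\bigr)$ all exist. At this stage the first ``if possible'' is resolved.

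Next I would invoke the standing hypothesis that there are enough $\mcal{N}$-acyclic open coverings to refine $\bsym{V}$ to an $\mcal{N}$-acyclic open covering $\bsym{U} = \{ U_k \}_{k \in K}$. Restricting the $i_k$ and $h_k$ along the refinement preserves all the first-round data, and now every finite intersection $U_{k_0, \ldots, k_m}$ is $\mcal{N}$-acyclic. The comparison isomorphisms $h_{k_0, k_1} := h_{k_1}^{-1} \circ h_{k_0} \in \gerbe{H}(U_{k_0, k_1})\bigl(F(i_{k_0}), F(i_{k_1})\bigr)$ are then defined, and the only remaining issue is lifting each of them into $\gerbe{G}$.

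The main obstacle is precisely this morphism-lifting step, which is where the second ``if possible'' could fail in general; but it is exactly the content of Lemma \ref{lem.A.2}. Since $U_{k_0, k_1}$ is $\mcal{N}$-acyclic, we have $\check{\mrm{H}}^1(U_{k_0, k_1}, \mcal{N}) = 1$, and Lemma \ref{lem.A.2} gives surjectivity of $F : \gerbe{G}(U_{k_0, k_1})(i_{k_0}, i_{k_1}) \to \gerbe{H}(U_{k_0, k_1})\bigl(F(i_{k_0}), F(i_{k_1})\bigr)$, so each $h_{k_0, k_1}$ admits a lift $g_{k_0, k_1}$. With all choices in place, Construction \ref{const1} yields a collection $c = \{ g_{k_0, k_1, k_2} \}$, which is a $2$-cocycle with values in $\mcal{N}$ by Lemma \ref{lem:10}; hence $\opn{cl}^2_F(j) = [c]$ is defined (its independence of the choices being Lemma \ref{lem.4}).
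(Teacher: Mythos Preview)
Your proof is correct and follows essentially the same approach as the paper: obtain local lifts $i_k$ and isomorphisms $h_k$ from local essential surjectivity of $F$, refine to an $\mcal{N}$-acyclic covering, and then invoke Lemma~\ref{lem.A.2} on the acyclic double intersections $U_{k_0,k_1}$ to lift each $h_{k_0,k_1}$. Your write-up is somewhat more explicit (you spell out the role of Lemmas~\ref{lem:10} and~\ref{lem.4}), but the argument is the same.
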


\begin{proof}
Since the morphism of gerbes $F$ is locally surjective on objects, we
can find an open covering $\bsym{U} = \{ U_k \}_{k \in K}$ of $X$,
and objects $i_k \in \opn{Ob} \gerbe{G}(U_k)$ that lift
$j|_{U_k}$. By refining it we can assume that $\bsym{U}$ is
$\mcal{N}$-acyclic. According to Lemma \ref{lem.A.2} 
there exist elements $g_{k_0, k_1}$ that lift the elements 
$h_{k_0, k_1}$, in the notation of Construction \ref{const1}.
\end{proof}

Now suppose we are given a morphism of central extensions of gerbes
\begin{equation} \label{eqn:10}
\UseTips \xymatrix @C=5ex @R=5ex {
1 
\ar[r]
&
\mcal{N}
\ar[r]
\ar[d]
&
\gerbe{G}
\ar[r]^{F}
\ar[d]_{D}
& 
\gerbe{H}
\ar[r]
\ar[d]_{E}
& 
1
\\
1 
\ar[r]
&
\mcal{N}'
\ar[r]
&
\gerbe{G}'
\ar[r]^{F'}
& 
\gerbe{H}'
\ar[r]
& 
1
} 
\end{equation}
There is a  homomorphism of sheaves of abelian groups
$D : \mcal{N} \to \mcal{N}'$, and an induced homomorphism
\[ D : \check{\mrm{H}}^2(X, \mcal{N}) \to \check{\mrm{H}}^2(X, \mcal{N}') . \]

\begin{prop} \label{prop:7}
Consider the morphism of central extension of gerbes \tup{(\ref{eqn:10})}. 
\begin{enumerate}
\item Let $j \in \opn{Ob} \gerbe{H}(X)$ be such that
the obstruction class $\opn{cl}^2_F(j)$ is defined, and let 
$j' := E(j) \in \opn{Ob} \gerbe{H}'(X)$. 
Then the obstruction class $\opn{cl}^2_{F'}(j')$ is also defined, and moreover
\[ \opn{cl}^2_{F'}(j') = D \bigl( \opn{cl}^2_{F}(j) \bigr)  \]
in $\check{\mrm{H}}^2(X, \mcal{N}')$.
\item Let $i, j \in \opn{Ob} \gerbe{G}(X)$ and let
$h \in \gerbe{H}(X) \bigl( F(i), F(j) \bigr)$. 
We write $i' := D(i)$ and $j' := D(j)$ for the corresponding objects of
$\gerbe{G}'(X)$, and
$h' := E(h)$ for the corresponding isomorphism $i' \to j'$. Then 
\[ \opn{cl}^1_{F'}(h') = D \bigl( \opn{cl}^1_{F}(h) \bigr)  \]
in $\check{\mrm{H}}^1(X, \mcal{N}')$.
\end{enumerate}
\end{prop}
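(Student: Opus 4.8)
The plan is to prove both parts by the same mechanism: represent the upstairs obstruction class by an explicit \v{C}ech cocycle produced by Construction \ref{const2} (for part (2)) or Construction \ref{const1} (for part (1)), apply the sheaf homomorphism $D : \mcal{N} \to \mcal{N}'$ termwise, and recognize the result as a cocycle legitimately arising from the same construction applied downstairs. Since $D$ induces the map on \v{C}ech cohomology, the identity $[D(c)] = D([c])$ then yields both asserted equalities at once. The only subtle ingredient is the $2$-isomorphism attached to the right-hand square of (\ref{eqn:10}): by Definition \ref{dfn.5} that square commutes only up to $2$-isomorphism, so I would fix once and for all a $2$-isomorphism $\theta : E \circ F \twoiso F' \circ D$, with components $\theta_i : E(F(i)) \iso F'(D(i))$ for local objects $i$ of $\gerbe{G}$. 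This $\theta$ is precisely what the phrase ``the corresponding isomorphism'' refers to: in part (2) one sets $h' := \theta_j \circ E(h) \circ \theta_i^{-1}$, an isomorphism $F'(i') \to F'(j')$, and in part (1) one sets $h'_k := E(h_k) \circ \theta_{i_k}^{-1}$, an isomorphism $F'(i'_k) \to j'$, where $i' = D(i)$ and $i'_k = D(i_k)$.

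For part (2) I would begin with liftings $g_k \in \gerbe{G}(U_k)(i,j)$ of $h$ on a covering $\bsym{U}$, so that $c = \{ g_{k_1}^{-1} \circ g_{k_0} \}$ represents $\opn{cl}^1_F(h)$. Setting $g'_k := D(g_k) \in \gerbe{G}'(U_k)(i',j')$, naturality of $\theta$ on the morphism $g_k$ together with $F(g_k) = h$ gives $F'(g'_k) = \theta_j \circ E(h) \circ \theta_i^{-1} = h'$, so the $g'_k$ are admissible liftings of $h'$. As $D$ is a group homomorphism, the associated $1$-cocycle is $(g'_{k_1})^{-1} \circ g'_{k_0} = D(g_{k_1}^{-1} \circ g_{k_0})$, that is, exactly $D(c)$; hence $\opn{cl}^1_{F'}(h') = [D(c)] = D(\opn{cl}^1_F(h))$.

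Part (1) proceeds identically using Construction \ref{const1}. I would transport the local data by taking $i'_k := D(i_k)$, $h'_k$ as above, and $g'_{k_0,k_1} := D(g_{k_0,k_1})$. The same naturality computation, now applied to $g_{k_0,k_1}$ and using $F(g_{k_0,k_1}) = h_{k_0,k_1}$, shows $F'(g'_{k_0,k_1}) = h'_{k_0,k_1}$; thus the $g'_{k_0,k_1}$ are admissible liftings, which already proves that $\opn{cl}^2_{F'}(j')$ is defined. Because $D$ respects composition and inversion, the triple overlaps give $g'_{k_0,k_1,k_2} = D(g_{k_0,k_1,k_2})$, so the resulting $2$-cocycle is $D(c)$. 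By Lemma \ref{lem.4} the class $\opn{cl}^2_{F'}(j')$ is independent of all the choices made, so computing it with these transported choices yields $\opn{cl}^2_{F'}(j') = D(\opn{cl}^2_F(j))$.

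I expect the main obstacle to be organizational rather than computational: one must thread the $2$-isomorphism $\theta$ through the definitions carefully, so that the transported isomorphisms $h'$ and $h'_k$ are the right targets and so that the naturality identity $F'(D(g)) = \theta \circ E(F(g)) \circ \theta^{-1}$ can be invoked verbatim at each overlap. Once this is arranged, the centrality of $\mcal{N}'$ together with the independence-of-choices lemmas makes the particular choice of $\theta$ immaterial, and everything else is the routine observation that each operation appearing in the two constructions is preserved by the homomorphism $D$.
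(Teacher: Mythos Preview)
Your proposal is correct and takes essentially the same approach as the paper: transport all the choices made in Constructions \ref{const2} and \ref{const1} through $D$ and $E$, and observe that the resulting cocycle is $D(c)$. You are in fact more careful than the paper, which compresses the argument to a single sentence and glosses over the role of the $2$-isomorphism $\theta : E \circ F \twoiso F' \circ D$ that you correctly identify as needed to make sense of ``the corresponding isomorphism.''
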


\begin{proof}
Take the choices made in constructing the class
$\opn{cl}^2_{F}(j)$ or $\opn{cl}^1_{F}(h)$, as the case may be, and use the same
open covering, and the images under $D, F$ of the elements, to construct the
class $\opn{cl}^2_{F'}(j')$ or $\opn{cl}^1_{F'}(h')$.
\end{proof}

\begin{cor}
Consider the morphism of central extension of gerbes \tup{(\ref{eqn:10})}.
Assume that $E$ is an equivalence, and that there are enough  $\mcal{N}$-acyclic
open coverings. Then the obstruction class $\opn{cl}^2_{F'}(j')$ is defined for
any $j' \in \opn{Ob} \gerbe{H}'(X)$.
\end{cor}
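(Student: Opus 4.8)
The plan is to reduce the statement to Theorem \ref{thm:10}, which already guarantees existence of the obstruction class for the \emph{top} extension whenever there are enough $\mcal{N}$-acyclic open coverings. The idea is to transport the given global object $j'$ of $\gerbe{H}'$ across the equivalence $E$ to a global object of $\gerbe{H}$, apply Theorem \ref{thm:10} there, and push the conclusion back down.

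First I would exploit that $E : \gerbe{H} \to \gerbe{H}'$ is an equivalence of gerbes. Choosing a quasi-inverse $E' : \gerbe{H}' \to \gerbe{H}$ together with a $2$-isomorphism $E \circ E' \twoiso \bsym{1}_{\gerbe{H}'}$, and setting $j := E'(j') \in \opn{Ob} \gerbe{H}(X)$, the value of this $2$-isomorphism at $j'$ yields an isomorphism $E(j) \iso j'$ in $\gerbe{H}'(X)$. In particular $\gerbe{H}'(X)\bigl( E(j), j' \bigr) \neq \emptyset$. Next, since there are enough $\mcal{N}$-acyclic open coverings, Theorem \ref{thm:10} applied to the top extension shows that $\opn{cl}^2_F(j)$ is defined. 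Then Proposition \ref{prop:7}(1), applied to the morphism of central extensions \tup{(\ref{eqn:10})} and the object $j$, gives that $\opn{cl}^2_{F'}\bigl( E(j) \bigr)$ is defined (it even equals $D\bigl( \opn{cl}^2_F(j) \bigr)$, though here we only need that it is defined). Finally, because $E(j)$ and $j'$ are isomorphic in $\gerbe{H}'(X)$, Proposition \ref{prop:10}, applied this time to the bottom extension $F'$, shows that $\opn{cl}^2_{F'}(j')$ is defined as well.

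I do not anticipate a genuine obstacle: the argument is a direct chaining of three previously established facts. The only point requiring a moment's care is the last one, namely that whether $\opn{cl}^2_{F'}(j')$ is \emph{defined} depends only on the isomorphism class of $j'$ in $\gerbe{H}'(X)$ — this is precisely what Proposition \ref{prop:10} supplies, and it is what makes the replacement of $j'$ by the isomorphic object $E(j)$ harmless.
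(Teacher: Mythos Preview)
Your proposal is correct and follows essentially the same route as the paper's own proof: find $j \in \opn{Ob}\gerbe{H}(X)$ with $E(j) \cong j'$ using that $E$ is an equivalence, invoke Theorem \ref{thm:10} to get $\opn{cl}^2_F(j)$ defined, then apply Proposition \ref{prop:7}(1) and Proposition \ref{prop:10}. The paper's proof is just a terser version of exactly this chain.
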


\begin{proof}
There is some $j \in \opn{Ob} \gerbe{H}(X)$ such that
$\gerbe{H}'(X) \bigl( j', E(j) \bigr) \neq \emptyset$.
Now use Propositions \ref{prop:7}(1) and \ref{prop:10}.
\end{proof}

\section{Pronilpotent Gerbes}
\label{sec:pronilp}

Let $X$ be a topological space. Recall that given an inverse system 
$\{ \mcal{G}_p \}_{p \in \N}$ of sheaves of groups on $X$, its inverse limit is
the sheaf of groups 
$\opn{lim}_{\leftarrow p} \, \mcal{G}_p$
whose group of sections on an open set $U$ is
\[ \Gamma(U, \opn{lim}_{\leftarrow p} \, \mcal{G}_p) = 
\opn{lim}_{\leftarrow p} \, \Gamma(U, \mcal{G}_p) . \]

\begin{dfn} \label{dfn.10}
Let $\mcal{G}$ be a sheaf of groups on $X$. 
\begin{enumerate}
\item A {\em normal filtration} of $\mcal{G}$ is a descending sequence 
$\{ \mcal{N}_p \}_{p \in \N}$ of sheaves of normal subgroups of $\mcal{G}$.
\item A {\em central filtration} of $\mcal{G}$ is a normal filtration
$\{ \mcal{N}_p \}_{p \in \N}$, such that 
$\mcal{N}_0 = \mcal{G}$, $\bigcap_p \mcal{N}_p = 1$, and for every $p$ the
extension of sheaves of groups 
\[ 1 \to \mcal{N}_p / \mcal{N}_{p+1} \to \mcal{G} / \mcal{N}_{p+1} 
\to \mcal{G} / \mcal{N}_p \to 1 \]
is central.
\item Let $\{ \mcal{N}_p \}_{p \in \N}$ be a normal filtration of $\mcal{G}$.
We say that $\mcal{G}$ is {\em complete} with respect to this filtration if
the canonical homomorphism of sheaves of groups
\[ \mcal{G} \to \opn{lim}_{\leftarrow p} \, 
\mcal{G} / \mcal{N}_{p} \] 
is an isomorphism.
\item If $\mcal{G}$ is complete with respect to some central filtration,
then we call it a {\em pronilpotent sheaf of groups}.
\end{enumerate}
\end{dfn}

Note that when $\{ \mcal{N}_p \}_{p \in \N}$ is a central filtration, then
each $\mcal{N}_p / \mcal{N}_{p+1}$ is a sheaf of abelian groups.

\begin{dfn} \label{dfn:11}
Let $\mcal{G}$ be a sheaf of groups on $X$, with central filtration
$\{ \mcal{N}_p \}_{p \in \N}$. An open set $U \subset X$ is called 
{\em acyclic} with respect to  $\{ \mcal{N}_p \}_{p \in \N}$ if
the following two conditions hold.
\begin{enumerate}
\rmitem{i} For every $p \geq 0$ and $i > 0$ the sheaf cohomology group 
$\mrm{H}^i(U, \mcal{N}_p / \mcal{N}_{p+1})$ is trivial.
\rmitem{ii} For every $q \geq p \geq 0$ the canonical group homomorphism
\[ \Gamma(U, \mcal{N}_p) \to \Gamma(U, \mcal{N}_p / \mcal{N}_{q}) \] 
is surjective. 
\end{enumerate}
\end{dfn}

\begin{lem} \label{lem.1}
Let $\{ \mcal{N}_p \}_{p \in \N}$ be a central filtration of the sheaf of
groups $\mcal{G}$. Suppose that $\mcal{G}$ is complete with respect to the
filtration $\{ \mcal{N}_p \}_{p \in \N}$, and the open set $U \subset X$ is 
acyclic with respect to $\{ \mcal{N}_p \}_{p \in \N}$. 
Let $G := \Gamma(U, \mcal{G})$ and
$N_p := \Gamma(U, \mcal{N}_p)$.
Then $G$ is complete with respect to the filtration $\{ N_p \}_{p \in \N}$.
\end{lem}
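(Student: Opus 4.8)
The plan is to compare the inverse system $\{ G/N_p \}_{p \in \N}$ with the system $\{ \Gamma(U, \mcal{G}/\mcal{N}_p) \}_{p \in \N}$, and to reduce completeness of $G$ to the completeness of $\mcal{G}$ that is assumed. First I would record that, by the very definition of the inverse limit sheaf recalled before Definition \ref{dfn.10}, the functor $\Gamma(U, -)$ commutes with inverse limits; hence completeness of $\mcal{G}$ yields
\[ G = \Gamma(U, \mcal{G}) = \Gamma\bigl( U, \opn{lim}_{\leftarrow p} \, \mcal{G}/\mcal{N}_p \bigr) = \opn{lim}_{\leftarrow p} \, \Gamma(U, \mcal{G}/\mcal{N}_p) . \]
So it will be enough to prove that the canonical map $\alpha_p : G/N_p \to \Gamma(U, \mcal{G}/\mcal{N}_p)$ is an isomorphism for every $p$; applying $\opn{lim}_{\leftarrow p}$ then identifies $G$ with $\opn{lim}_{\leftarrow p} \, G/N_p$ via the canonical map, which is exactly completeness of $G$.

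The injectivity of $\alpha_p$ is formal: applying the left-exact functor $\Gamma(U, -)$ to $1 \to \mcal{N}_p \to \mcal{G} \to \mcal{G}/\mcal{N}_p \to 1$ identifies $N_p = \Gamma(U, \mcal{N}_p)$ with the kernel of $G \to \Gamma(U, \mcal{G}/\mcal{N}_p)$. Thus the whole argument hinges on the surjectivity of $\alpha_p$ — that every global section of $\mcal{G}/\mcal{N}_p$ over $U$ lifts to a section of $\mcal{G}$ over $U$ — and this is the step I expect to be the main obstacle.

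I would establish surjectivity by induction on $p$, the case $p = 0$ being trivial as $\mcal{N}_0 = \mcal{G}$. For the passage from $p$ to $p+1$, start with $\bar{s} \in \Gamma(U, \mcal{G}/\mcal{N}_{p+1})$ and let $\bar{s}_p$ be its image in $\Gamma(U, \mcal{G}/\mcal{N}_p)$. The inductive hypothesis lifts $\bar{s}_p$ to some $t \in G$; write $t_{p+1}$ for the image of $t$ in $\Gamma(U, \mcal{G}/\mcal{N}_{p+1})$. Since $\bar{s}$ and $t_{p+1}$ agree in $\Gamma(U, \mcal{G}/\mcal{N}_p)$ and the kernel of $\mcal{G}/\mcal{N}_{p+1} \to \mcal{G}/\mcal{N}_p$ is the central subsheaf $\mcal{N}_p/\mcal{N}_{p+1}$, left-exactness gives $n := \bar{s} \cdot t_{p+1}^{-1} \in \Gamma(U, \mcal{N}_p/\mcal{N}_{p+1})$. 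At this point I invoke the acyclicity hypothesis: condition (ii) of Definition \ref{dfn:11}, taken for the pair $p \le p+1$, says precisely that $\Gamma(U, \mcal{N}_p) \to \Gamma(U, \mcal{N}_p/\mcal{N}_{p+1})$ is surjective, so $n$ lifts to some $\til{n} \in \Gamma(U, \mcal{N}_p) \subset G$. Then $\til{n} \cdot t \in G$ maps to $n \cdot t_{p+1} = \bar{s}$, which gives the desired lift and closes the induction.

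In summary, the only real content is the surjectivity, and it is condition (ii) of acyclicity that supplies it. Condition (i) is not used directly; it is what forces (ii) to hold in practice, through the long exact cohomology sequences of the central extensions $1 \to \mcal{N}_q/\mcal{N}_{q+1} \to \mcal{N}_p/\mcal{N}_{q+1} \to \mcal{N}_p/\mcal{N}_q \to 1$ and the vanishing of the groups $\mrm{H}^1(U, \mcal{N}_q/\mcal{N}_{q+1})$. Everything else — interchanging $\Gamma(U, -)$ with the inverse limit and the kernel computations — is routine.
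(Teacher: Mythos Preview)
Your proof is correct and follows essentially the same approach as the paper: establish the isomorphism $G/N_p \cong \Gamma(U, \mcal{G}/\mcal{N}_p)$ for each $p$, then use completeness of $\mcal{G}$ together with the fact that $\Gamma(U,-)$ commutes with inverse limits. The one difference is that your induction for surjectivity is unnecessary: condition (ii) of Definition~\ref{dfn:11} applies for \emph{all} $q \geq p$, so taking $p = 0$ and arbitrary $q$ gives the surjectivity of $G = \Gamma(U,\mcal{N}_0) \to \Gamma(U,\mcal{G}/\mcal{N}_q)$ in one stroke, which is how the paper does it.
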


\begin{proof}
Condition (ii) of Definition \ref{dfn:11}, combined with Proposition
\ref{prop:9}(1), say that for every $p$ there is an exact sequence of groups
\[ 1 \to N_p \to G \to \Gamma(U, \mcal{G} / \mcal{N}_{p}) \to 1 . \]
Now use Definition \ref{dfn.10}(3).
\end{proof}

In the situation above the filtration $\{ N_p \}_{p \in \N}$ of $G$ is
separated. Therefore it defines a metric topology on
$G$, say by letting $N_p \cdot g = g \cdot N_p$ be the ball of radius $2^{-p}$
around the point $g \in G$ (cf.\ \cite[Section III.5]{CA}).
The condition that $G \cong \opn{lim}_{\leftarrow p} \, G / N_p$
translates to $G$ being a complete metric space.

\begin{dfn}
Let $\mcal{G}$ be a sheaf of groups on $X$, with central filtration
$\{ \mcal{N}_p \}_{p \in \N}$.
\begin{enumerate}
\item A collection $\bsym{U} = \{ U_k \}_{k \in K}$ of open sets of $X$ is
called {\em  acyclic} with respect to  $\{ \mcal{N}_p \}_{p \in \N}$
if every finite intersection $U_{k_0, \ldots, k_m}$ is acyclic with respect to 
$\{ \mcal{N}_p \}_{p \in \N}$, in the sense of Definition \ref{dfn:11}.
\item We say that there are {\em enough acyclic coverings} with respect to 
$\{ \mcal{N}_p \}_{p \in \N}$ if every open covering $\bsym{U}$ of an open set
$U \subset X$ admits a refinement $\bsym{U}'$ which is acyclic  with
respect to $\{ \mcal{N}_p \}_{p \in \N}$.
\end{enumerate}
\end{dfn}

Now we move to gerbes. Let $\gerbe{G}$ be a gerbe on $X$.
The notion of normal subgroupoid $\gerbe{N} \subset \gerbe{G}$ was introduced
in Definition \ref{dfn.9}.

\begin{dfn} \label{dfn.12}
Let $\gerbe{G}$ be a gerbe on $X$. 
\begin{enumerate}
\item A {\em normal filtration} of $\gerbe{G}$ is a descending sequence 
$\{ \gerbe{N}_p \}_{p \in \N}$ 
of normal subgroupoids of $\gerbe{G}$.
\item A {\em central filtration} of $\gerbe{G}$
is a normal filtration $\{ \gerbe{N}_p \}_{p \in \N}$, such that for every
local object $i$ of $\gerbe{G}$, the filtration 
$\{ \gerbe{N}_p(i, i) \}_{p \in \N}$ of the sheaf of groups 
$\gerbe{G}(i, i)$ is central.
\item Let $\{ \gerbe{N}_p \}_{p \in \N}$ be a normal filtration of $\gerbe{G}$.
We say that $\gerbe{G}$ is {\em complete} with respect to this filtration if
for every local object $i$ of $\gerbe{G}$, the sheaf $\gerbe{G}(i, i)$
is complete with respect to the filtration 
 $\{ \gerbe{N}_p(i, i) \}_{p \in \N}$.
\item If $\gerbe{G}$ is complete with respect to some central filtration,
then we call it a {\em pronilpotent gerbe}.
\end{enumerate}
\end{dfn}

\begin{prop}
Suppose $\{ \gerbe{N}_p \}_{p \in \N}$ is a central filtration of the gerbe
$\gerbe{G}$. Then for every $p$ there is a central extension of gerbes
\begin{equation} \label{eqn:16}
1 \to \gerbe{N}_p / \gerbe{N}_{p+1} \to 
\gerbe{G} / \gerbe{N}_{p+1} \xar{F} \gerbe{G} / \gerbe{N}_{p} \to 1 .
\end{equation}
\end{prop}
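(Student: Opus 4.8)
The plan is to produce $F$ and the whole extension directly from Theorem \ref{thm:1}, using the explicit description of the quotient construction on isomorphism sheaves, so that every assertion reduces to the corresponding statement for the central filtration $\{ \gerbe{N}_p(i,i) \}_{p \in \N}$ of the sheaf of groups $\gerbe{G}(i,i)$ (Definition \ref{dfn.12}(2)), where it follows from Definition \ref{dfn.10}.

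First I would build the two quotient gerbes. By Theorem \ref{thm:1}(i), applied to the normal subgroupoids $\gerbe{N}_{p+1}$ and $\gerbe{N}_p$ of $\gerbe{G}$, we obtain gerbes $\gerbe{G} / \gerbe{N}_{p+1}$ and $\gerbe{G} / \gerbe{N}_p$ together with quotient morphisms $F_{p+1} : \gerbe{G} \to \gerbe{G} / \gerbe{N}_{p+1}$ and $F_p : \gerbe{G} \to \gerbe{G} / \gerbe{N}_p$. Since $\gerbe{N}_{p+1} \subset \gerbe{N}_p$, I would invoke Theorem \ref{thm:1}(ii) with the identity morphism $D = \bsym{1}_{\gerbe{G}}$ and the target extension $1 \to \gerbe{N}_p \to \gerbe{G} \xar{F_p} \gerbe{G} / \gerbe{N}_p \to 1$; because $\bsym{1}_{\gerbe{G}}(\gerbe{N}_{p+1}) = \gerbe{N}_{p+1} \subset \gerbe{N}_p$, this yields a morphism of gerbes $F : \gerbe{G} / \gerbe{N}_{p+1} \to \gerbe{G} / \gerbe{N}_p$, unique up to $2$-isomorphism, with $F \circ F_{p+1} \twocong F_p$.

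Next I would check that $F$ is a weak epimorphism and compute its kernel, working locally. By part (a) of the proof of Theorem \ref{thm:1} together with Lemma \ref{lem.3}, stackification leaves isomorphism sheaves unchanged, so for local objects $i, j$ of $\gerbe{G}$ one has $(\gerbe{G} / \gerbe{N}_{p+1})(i, j) = \gerbe{G}(i, j) / \gerbe{N}_{p+1}(i, i)$ and $(\gerbe{G} / \gerbe{N}_p)(i, j) = \gerbe{G}(i, j) / \gerbe{N}_p(i, i)$, and on these $F$ is the evident surjection; hence $F$ is surjective on isomorphism sheaves. Local essential surjectivity on objects follows from that of $F_p$ via $F \circ F_{p+1} \twocong F_p$. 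By Proposition \ref{prop:13}, $\opn{Ker}(F)$ is a normal subgroupoid of $\gerbe{G} / \gerbe{N}_{p+1}$ whose automorphism sheaves are $\opn{Ker}(F)(i, i) = \gerbe{N}_p(i, i) / \gerbe{N}_{p+1}(i, i)$; this is by definition the subgroupoid denoted $\gerbe{N}_p / \gerbe{N}_{p+1}$. Thus the displayed diagram is an extension of gerbes.

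Finally I would verify \emph{centrality}, which is the only step carrying real content. By Proposition \ref{prop:14}, the center $\opn{Z}(\gerbe{G} / \gerbe{N}_{p+1})$ has automorphism sheaves $\mrm{Z}\bigl( \gerbe{G}(i, i) / \gerbe{N}_{p+1}(i, i) \bigr)$, and since inclusion of normal subgroupoids is tested pointwise on automorphism sheaves, it suffices to show $\gerbe{N}_p(i, i) / \gerbe{N}_{p+1}(i, i) \subset \mrm{Z}\bigl( \gerbe{G}(i, i) / \gerbe{N}_{p+1}(i, i) \bigr)$ for every local object $i$. But $\{ \gerbe{N}_p(i, i) \}_{p \in \N}$ is a central filtration of $\gerbe{G}(i, i)$, so by Definition \ref{dfn.10}(2) the extension of sheaves of groups $1 \to \gerbe{N}_p(i, i) / \gerbe{N}_{p+1}(i, i) \to \gerbe{G}(i, i) / \gerbe{N}_{p+1}(i, i) \to \gerbe{G}(i, i) / \gerbe{N}_p(i, i) \to 1$ is central, which is exactly the required containment. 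Hence $\opn{Ker}(F)$ is a central subgroupoid and the extension is central in the sense of Definition \ref{dfn:2}; via Proposition \ref{prop:12} the band $\gerbe{N}_p / \gerbe{N}_{p+1}$ is the expected sheaf of abelian groups. I expect the main difficulty to be organizational rather than conceptual: the work lies in consistently reducing the gerbe-level claims to the level of the sheaves of groups $\gerbe{G}(i, i)$ using the explicit isomorphism sheaves of the quotients, after which the central filtration hypothesis supplies everything.
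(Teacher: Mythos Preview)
Your proof is correct and follows exactly the approach indicated in the paper, whose entire proof is the single line ``Use Theorem \ref{thm:1}.'' You have simply unpacked what that invocation entails: constructing $F$ via Theorem \ref{thm:1}(ii), identifying the kernel on isomorphism sheaves using the explicit description from Lemma \ref{lem.3} and step (a) of the proof of Theorem \ref{thm:1}, and then reducing centrality to Definition \ref{dfn.10}(2) at the level of the sheaves $\gerbe{G}(i,i)$.
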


\begin{proof}
Use Theorem \ref{thm:1}.
\end{proof}

Observe that $\gerbe{N}_p / \gerbe{N}_{p+1}$ is a central subgroupoid of the
gerbe $\gerbe{G} / \gerbe{N}_{p+1}$; so 
$\gerbe{N}_p / \gerbe{N}_{p+1}$ can be
regarded as a sheaf of abelian groups on $X$. See Proposition \ref{prop:12}.

\begin{dfn}
Let $\gerbe{G}$ be a gerbe on $X$, with central filtration
$\{ \gerbe{N}_p \}_{p \in \N}$.
An open set $U \subset X$ is called {\em acyclic} with respect to
$\{ \gerbe{N}_p \}_{p \in \N}$ if the following two condition hold:
\begin{enumerate}
\rmitem{i} The groupoid $\gerbe{G}(U)$ is nonempty.
\rmitem{ii} For every $i \in \opn{Ob} \gerbe{G}(U)$, the set $U$ is 
acyclic with respect to the central filtration
$\{ \gerbe{N}_p(i, i) \}_{p \in \N}$
of the sheaf of groups $\gerbe{G}(i, i)$. 
\end{enumerate}
\end{dfn}

\begin{dfn}
Let $\gerbe{G}$ be a gerbe on $X$, with central filtration
$\{ \gerbe{N}_p \}_{p \in \N}$.
\begin{enumerate}
\item A collection $\bsym{U} = \{ U_k \}_{k \in K}$ of open sets of $X$ is
called {\em acyclic} with respect to  $\{ \gerbe{N}_p \}_{p \in \N}$
if every finite intersection $U_{k_0, \ldots, k_m}$ is acyclic with respect to 
$\{ \gerbe{N}_p \}_{p \in \N}$.
\item We say that there are {\em enough  acyclic  coverings} with respect to 
$\{ \gerbe{N}_p \}_{p \in \N}$ if every open covering $\bsym{U}$ of an open set
$U$ admits a refinement $\bsym{U}'$ which is acyclic  with
respect to $\{ \gerbe{N}_p \}_{p \in \N}$.
\end{enumerate}
\end{dfn}

Suppose $\gerbe{G}$ is a gerbe, complete with respect to a  central
filtration $\{ \gerbe{N}_p \}_{p \in \N}$. Let $U$ be an open set of $X$ which
is acyclic with respect to $\{ \gerbe{N}_p \}_{p \in \N}$, and
let $i$ be an object of the groupoid $\gerbe{G}(U)$. 
By Lemma \ref{lem.1} the group $\gerbe{G}(U)(i, i)$ is complete with respect to
the filtration $\{ \gerbe{N}_p(U)(i, i) \}_{p \in \N}$; so 
$\gerbe{G}(U)(i, i)$ is a complete metric space.
Now let $j$ be another object of $\gerbe{G}(U)$, and suppose 
$\gerbe{G}(U)(i, j) \neq \emptyset$. Then the set
$\gerbe{G}(U)(i, j)$ is a $\gerbe{G}(U)(j, j)$-$\gerbe{G}(U)(i, i)$-bitorsor.
One can introduce a metric topology on this set, by letting 
\[ g \circ \gerbe{N}_p(U)(i, i) = \gerbe{N}_p(U)(j, j) \circ g \]
be the ball of radius
$2^{-p}$ around $g \in \gerbe{G}(U)(i, j)$. 
For any such $g$ the function 
$h \mapsto g \cdot h$ is an isomorphism of metric spaces
$\gerbe{G}(U)(i, i) \iso \gerbe{G}(U)(i, j)$; and hence 
$\gerbe{G}(U)(i, j)$ is complete.

\begin{thm} \label{thm.4}
Let $\gerbe{G}$ be a gerbe on the topological space $X$, and let 
$\{ \gerbe{N}_p \}_{p \in \N}$ be a central filtration on it.
Assume that $\gerbe{G}$ is complete with respect to 
$\{ \gerbe{N}_p \}_{p \in \N}$, and that there are enough
acyclic coverings with respect to 
$\{ \gerbe{N}_p \}_{p \in \N}$.
Let $U$ be some open set of $X$.
\begin{enumerate}
\item If $\mrm{H}^1(U, \gerbe{N}_p / \gerbe{N}_{p+1}) = 1$
for every $p \geq 0$, then the groupoid $\gerbe{G}(U)$ is connected.
\item If $\mrm{H}^2(U, \gerbe{N}_p / \gerbe{N}_{p+1}) = 1$
for every $p \geq 0$, then the groupoid $\gerbe{G}(U)$ is nonempty.
\end{enumerate}
\end{thm}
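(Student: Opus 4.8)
The plan is to climb the tower of central extensions (\ref{eqn:16})
\[ 1 \to \gerbe{N}_p / \gerbe{N}_{p+1} \to \gerbe{G} / \gerbe{N}_{p+1} \xar{F_p} \gerbe{G} / \gerbe{N}_p \to 1 \]
one step at a time, using the obstruction theorems of the previous sections, and then pass to the inverse limit in $p$. The engine for the limit is a completeness lemma that I will prove first: for any open $W \subset X$ and local objects $i, j$ of $\gerbe{G}$ over $W$, the natural map $\gerbe{G}(W)(i,j) \to \opn{lim}_{\leftarrow p} (\gerbe{G}/\gerbe{N}_p)(W)(F_p(i), F_p(j))$ is bijective. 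Indeed $(\gerbe{G}/\gerbe{N}_p)(i,j) = \gerbe{G}(i,j) / \gerbe{N}_p(i,i)$ as sheaves (part (a) of the proof of Theorem \ref{thm:1}); the torsor $\gerbe{G}(i,j)$ is locally trivial because $\gerbe{G}$ is locally connected, so locally it is isomorphic to $\gerbe{G}(i,i)$, which is complete by Definition \ref{dfn.12}(3); hence $\gerbe{G}(i,j) \cong \opn{lim}_{\leftarrow p} \gerbe{G}(i,j) / \gerbe{N}_p(i,i)$ as sheaves, and applying $\Gamma(W, -)$, which commutes with inverse limits, gives the claim. Since $\bigcap_p \gerbe{N}_p = 1$ this map is in particular injective, so equalities of morphisms in $\gerbe{G}$ may be checked modulo every $\gerbe{N}_p$.

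For part (1), take $i, j \in \opn{Ob} \gerbe{G}(U)$ and build, by induction on $p$, a compatible tower of isomorphisms $\bar{h}_p : F_p(i) \iso F_p(j)$ in $(\gerbe{G}/\gerbe{N}_p)(U)$. The base case $p = 0$ is trivial since $\gerbe{N}_0 = \gerbe{G}$ makes $\gerbe{G}/\gerbe{N}_0$ the terminal gerbe. Given $\bar{h}_p$, Theorem \ref{thm.3} applied to (\ref{eqn:16}) over $U$ places the obstruction to lifting in $\check{\mrm{H}}^1(U, \gerbe{N}_p / \gerbe{N}_{p+1}) \cong \mrm{H}^1(U, \gerbe{N}_p / \gerbe{N}_{p+1}) = 1$ (the comparison map is bijective in degree $1$); so a lift $\bar{h}_{p+1}$ exists, and one arranges it compatible with $\bar{h}_p$. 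By the limit lemma with $W = U$, the tower $\{ \bar{h}_p \}$ corresponds to a single morphism $g \in \gerbe{G}(U)(i,j)$, necessarily an isomorphism. Hence $\gerbe{G}(U)$ is connected.

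For part (2) I first build, again by induction on $p$, a global tower of objects $j_p \in \opn{Ob}(\gerbe{G}/\gerbe{N}_p)(U)$ with isomorphisms $F_p(j_{p+1}) \iso j_p$, starting from the terminal gerbe $\gerbe{G}/\gerbe{N}_0$. For the step, Theorem \ref{thm:10} together with the enough-acyclic-coverings hypothesis (each such covering being $\gerbe{N}_p/\gerbe{N}_{p+1}$-acyclic by condition (i) of acyclicity) guarantees that $\opn{cl}^2_{F_p}(j_p)$ is defined; by Proposition \ref{prop:11}(2) it lies in $\check{\mrm{H}}^2(U, \gerbe{N}_p / \gerbe{N}_{p+1}) \cong \mrm{H}^2(U, \gerbe{N}_p / \gerbe{N}_{p+1}) = 1$, so it vanishes and Theorem \ref{thm.2} produces $j_{p+1}$.

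The main obstacle is converting the tower $\{ j_p \}$ into an honest object of $\gerbe{G}(U)$, since completeness is assumed only for automorphism sheaves, not for objects. I resolve this by descent. Choose an acyclic covering $\bsym{U} = \{ U_k \}$ of $U$; each $U_k$ is acyclic, so $\gerbe{G}(U_k) \neq \emptyset$ and I pick $a_k \in \opn{Ob} \gerbe{G}(U_k)$. Over the acyclic $U_k$ one has $\mrm{H}^1(U_k, \gerbe{N}_p / \gerbe{N}_{p+1}) = 1$ for all $p$, so the argument of part (1) supplies a compatible tower of isomorphisms $u_{k,p} : F_p(a_k) \iso j_p|_{U_k}$. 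Setting $w_{k_0, k_1, p} := (u_{k_1, p})^{-1} \circ u_{k_0, p}$ yields, over each acyclic overlap $U_{k_0, k_1}$, a compatible tower $F_p(a_{k_0}) \iso F_p(a_{k_1})$, which the limit lemma assembles into genuine isomorphisms $g_{k_0, k_1} \in \gerbe{G}(U_{k_0, k_1})(a_{k_0}, a_{k_1})$. The cocycle identity $g_{k_1, k_2} \circ g_{k_0, k_1} = g_{k_0, k_2}$ holds modulo every $\gerbe{N}_p$ (the $w$'s satisfy it by construction), hence holds exactly by the injectivity in the limit lemma. Descent for objects then glues $\{ a_k \}$ along $\{ g_{k_0, k_1} \}$ to an object of $\gerbe{G}(U)$, proving nonemptiness. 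The delicate point is precisely this last passage: the global $\mrm{H}^2(U, -)$-vanishing yields only a tower in the quotients, and it is the local degree-one acyclicity on the $U_k$, together with completeness of the automorphism sheaves, that upgrades it to a descent datum.
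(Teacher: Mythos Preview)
Your proof is correct but organized differently from the paper's. The paper never isolates your ``limit lemma'' and never builds a global tower $\{j_p\}$ in the quotient gerbes; instead it works from the start on an acyclic covering $\{U_k\}$ of $U$ and constructs Cauchy sequences directly in $\gerbe{G}$. For part (1) it picks arbitrary $g_{k;0} \in \gerbe{G}(U_k)(i,j)$ and recursively corrects them to $g_{k;p}$ so that $g_{k_1;p}^{-1}\circ g_{k_0;p} \in \gerbe{N}_p$, the correction at each step coming from writing the discrepancy as a \v{C}ech $1$-coboundary in $\gerbe{N}_p/\gerbe{N}_{p+1}$ and lifting it via condition (ii) of Definition \ref{dfn:11}; the limits $g_k := \lim_p g_{k;p}$ in the complete metric space $\gerbe{G}(U_k)(i,j)$ then agree on overlaps and glue. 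Part (2) is parallel: pick $i_k$, use part (1) on the overlaps to get initial $g_{k_0,k_1;0}$, and recursively kill the $2$-cocycle defect. Your route instead invokes Theorems \ref{thm.3}, \ref{thm.2}, \ref{thm:10} as black boxes to build compatible towers in the $\gerbe{G}/\gerbe{N}_p$, and your limit lemma (the sheaf-theoretic form of the metric completeness the paper uses pointwise) pulls them back to $\gerbe{G}$. A pleasant byproduct is that your part (1) never touches a covering of $U$, since $\check{\mrm{H}}^1 = \mrm{H}^1$ always; for part (2) both arguments need the covering, and your extra global tower $\{j_p\}$ is what makes the local towers $u_{k,p}$ coherent across $k$---the price for not manipulating Cauchy sequences directly in $\gerbe{G}$.
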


\begin{proof}
(1) This is very similar to Theorem \ref{thm.3}. Given 
$i, j \in \opn{Ob} \gerbe{G}(U)$, we must show that 
$\gerbe{G}(U)(i, j) \neq \emptyset$.

Since $\gerbe{G}$ is locally connected, we can find an open covering 
$\bsym{U} = \{ U_k \}_{k \in K}$ of $U$ such that
$\gerbe{G}(U_k)(i, j) \neq \emptyset$ for any $k \in K$.
By refining $\bsym{U}$, we can assume that it is  acyclic with
respect to $\{ \gerbe{N}_p \}_{p \in \N}$.
For each $k \in K$ let us choose an element
$g_{k; 0} \in \gerbe{G}(U_k)(i, j)$.
We are going to construct new elements
$g_{k; p} \in \gerbe{G}(U_k)(i, j)$, for all $k \in K$ and $p \in \N$, 
satisfying these conditions:
\begin{enumerate}
\rmitem{a} $g_{k; p+1} \in g_{k; p} \circ \gerbe{N}_p(U_k)(i, i)$.
\rmitem{b} $g_{k_1; p}^{-1} \circ g_{k_0; p} \in 
\gerbe{N}_p(U_{k_0, k_1})(i, i)$
for any $k_0, k_1 \in K$.
\end{enumerate}
The construction is by recursion on $p$.

For $p = 0$ the elements $g_{k; 0}$ are already given. So let $p \geq 0$,
and assume that we have elements $g_{k; p'}$ for $p' \leq p$, satisfying
conditions (a)-(b). 
Let us denote by 
$\bar{g}_{k; p} \in (\gerbe{G} / \gerbe{N}_{p+1})(U_k)(i, j)$
the image of $g_{k; p}$, and define
\[ \bar{g}_{k_0, k_1; p} := \bar{g}_{k_1; p}^{-1} \circ
\bar{g}_{k_0; p} \in (\gerbe{G} / \gerbe{N}_{p+1})(U_{k_0, k_1})(i, i)  \]
for $k_0, k_1 \in K$.
Consider the central extension of gerbes (\ref{eqn:16}).
By condition (b) we have 
$F(\bar{g}_{k_0, k_1; p}) = 1$; 
so
\[ \bar{g}_{k_0, k_1; p} \in 
(\gerbe{N}_p / \gerbe{N}_{p+1})(U_{k_0, k_1})(i, i) . \]
We get a \v{C}ech $1$-cocycle 
$c := \{ \bar{g}_{k_0, k_1; p} \}_{k_0, k_1 \in K}$ with values in the sheaf
$\gerbe{N}_p / \gerbe{N}_{p+1}$.

According to the assumptions and Proposition \ref{prop:11}(1), we have
\[ \check{\mrm{H}}^1(\bsym{U}, \gerbe{N}_p / \gerbe{N}_{p+1}) \cong 
\mrm{H}^1(U, \gerbe{N}_p / \gerbe{N}_{p+1}) = 1 . \]
Hence there is a $0$-cochain 
$b = \{ \bar{f}_k \}_{k \in K}$ 
such that $c = \d(b)$, where $\d$ is the \v{C}ech coboundary.
By condition (ii) of Definition \ref{dfn:11} the homomorphism
\[ \gerbe{N}_p(U_k)(i, i) \to (\gerbe{N}_p / \gerbe{N}_{p+1})(U_k)(i, i) \]
is surjective, so we can lift $\bar{f}_k$ to an element
$f_k \in \gerbe{N}_p(U_k)(i)$.
Let us define
\[ g_{k; p+1} := g_{k; p} \cdot f_k^{-1} . \]
Then conditions (a)-(b) are satisfied. 

We know that the set $\gerbe{G}(U_k)(i, j)$ is a complete metric space. 
Condition (a) says that $\{ g_{k; p} \}_{p \in \N}$ is a Cauchy sequence. 
Let 
\[ g_k := \lim_{p \to \infty} \, g_{k; p} \in \gerbe{G}(U_k)(i, j) . \]
Condition (b) now says that $\{ g_k \}_{k \in K}$ is a $1$-cocycle. 
By descent for morphisms there is an element
$g \in \gerbe{G}(U)(i, j)$ such that $g|_{U_k} = g_k$.

\medskip \noindent
(2) This is like Theorem \ref{thm.2}. Since $\gerbe{G}$ is locally nonempty,
there is an open covering $\bsym{U} = \{ U_k \}_{k \in K}$ of $U$ such that 
all the groupoids $\gerbe{G}(U_k)$ are nonempty. By refining $\bsym{U}$ we may
assume it is acyclic with respect to $\{ \gerbe{N}_p \}_{p \in \N}$.
Let's choose some $i_k \in \opn{Ob} \gerbe{G}(U_k)$.
For any $k_0, k_1 \in K$, and any $p \in \N$, we have
$\mrm{H}^1(U_{k_0, k_1}, \gerbe{N}_p / \gerbe{N}_{p+1}) = 1$. According to part
(1) of the theorem, applied to the open set $U_{k_0, k_1}$, the groupoid 
$\gerbe{G}(U_{k_0, k_1})$ is connected. 
Let us choose some element
$g_{k_0, k_1; 0} \in \gerbe{G}(U_{k_0, k_1})(i_{k_0}, i_{k_1})$.

Using recursion on $p$ we shall construct elements
$g_{k_0, k_1; p} \in \gerbe{G}(U_{k_0, k_1})(i_{k_0}, i_{k_1})$
satisfying these conditions:
\begin{enumerate}
\rmitem{a} $g_{k_0, k_1; p+1} \in g_{k_0, k_1; p} 
\circ \gerbe{N}_p(U_{k_0, k_1})(i_{k_0}, i_{k_0})$.
\rmitem{b} $g_{k_0, k_2; p}^{-1} \circ g_{k_1, k_2; p} \circ
g_{k_0, k_1; p} \in \gerbe{N}_p(U_{k_0, k_1, k_2})(i_{k_0}, i_{k_0})$
for any $k_0, k_1, k_2 \in K$.
\end{enumerate}

For $p = 0$ the elements $g_{k_0, k_1; 0}$ are already given. So let
$p \geq 0$,
and assume that we have elements $g_{k_0, k_1; p'}$ for $p' \leq p$, satisfying
conditions (a)-(b). 
Let us denote by 
\[ \bar{g}_{k_0, k_1; p} \in (\gerbe{G} / \gerbe{N}_{p+1})(U_{k_0, k_1})
(i_{k_0}, i_{k_1}) \]
the image of $g_{k_0, k_1; p}$, and define 
\[ \bar{g}_{k_0, k_1, k_2; p} := 
\bar{g}_{k_0, k_2; p}^{-1} \circ \bar{g}_{k_1, k_2; p} \circ
\bar{g}_{k_0, k_1; p} \in 
(\gerbe{G} / \gerbe{N}_{p+1})(U_{k_0, k_1, k_2})(i_{k_0}, i_{k_0}) . \]
Consider the central extension of gerbes (\ref{eqn:16}).
By condition (b) we have \linebreak
$F(\bar{g}_{k_0, k_1, k_2; p}) = 1$; 
so
\[ \bar{g}_{k_0, k_1, k_2; p} \in 
(\gerbe{N}_p / \gerbe{N}_{p+1})(U_{k_0, k_1, k_2})(i_{k_0}, i_{k_0}) . \]
Lemma \ref{lem:10} says that 
$c := \{ \bar{g}_{k_0, k_1, k_2; p} \}$
is a \v{C}ech $2$-cocycle.

According to the assumptions and Proposition \ref{prop:11}(1), we have
\[ \check{\mrm{H}}^2(\bsym{U}, \gerbe{N}_p / \gerbe{N}_{p+1}) \cong 
\mrm{H}^2(U, \gerbe{N}_p / \gerbe{N}_{p+1}) = 1 . \]
Hence there is a $1$-cochain 
$b = \{ \bar{f}_{k_0, k_1} \}$ 
such that $c = \d(b)$. As before, we can lift $\bar{f}_{k_0, k_1}$ to an element
$f_{k_0, k_1} \in \gerbe{N}_p(U_{k_0, k_1})(i_{k_0}, i_{k_0})$.
Let us define
\[ g_{k_0, k_1; p+1} := g_{k_0, k_1; p} \circ f_{k_0, k_1}^{-1} . \]
Then conditions (a)-(b) are satisfied. 

As in the proof of part (1), let
\[ g_{k_0, k_1} := \lim_{p \to \infty} \, g_{k_0, k_1; p} \in 
\gerbe{G}(U_{k_0, k_1})(i_{k_0}, i_{k_1}) . \]
Condition (b) says that $\{ g_{k_0. k_1} \}_{k_0, k_1 \in K}$ is a $2$-cocycle. 
By descent for objects there is an object
$i \in \opn{Ob} \gerbe{G}(U)$.
\end{proof}

\section{Fake Global Objects of Gerbes}
\label{sec.fake}

In this section $X$ is some topological space. We will study a gerbe
$\gerbe{G}$ on $X$, with center $\mrm{Z}(\gerbe{G})$,
and the central extension of gerbes
\begin{equation} \label{eqn:11}
1 \to  \mrm{Z}(\gerbe{G}) \to \gerbe{G} \xar{F} 
\gerbe{G} / \mrm{Z}(\gerbe{G}) \to 1 .
\end{equation}

\begin{dfn}
An object 
$i \in \opn{Ob}\, \bigl( \gerbe{G} / \mrm{Z}(\gerbe{G}) \bigr)(X)$ 
is called a {\em fake global object of $\gerbe{G}$}.
\end{dfn}

When we need to emphasize that $i \in \opn{Ob} \gerbe{G}(X)$,
as opposed to being in \linebreak
$\opn{Ob}\, \bigl( \gerbe{G} / \mrm{Z}(\gerbe{G}) \bigr)(X)$,
we will say that $i$ is a {\em true global object of $\gerbe{G}$}.

Note that some fake global objects $i$ of $\gerbe{G}$ will lift to
true global objects of $\gerbe{G}$, whereas other won't; this is
determined by the vanishing of the obstruction class 
\[ \opn{cl}^2_F(i) \in \check{\mrm{H}}^2 \bigl( X, \mrm{Z}(\gerbe{G}) 
\bigr) \]
for the central extension of gerbes (\ref{eqn:11}), if this obstruction
class is defined. 

Here is an easy example of a fake global object that does not lift.

\begin{exa} \label{exa:11}
Suppose $X$ is an algebraic variety over a field, with 
$\mrm{H}^2(X, \mcal{O}_X) \neq 0$. 
Choose a nonzero cohomology class $c \in \mrm{H}^2(X, \mcal{O}_X)$. 
There is an abelian gerbe $\gerbe{G}$ corresponding to $c$, and it has
no global objects. (This construction is standard; cf.\ Example \ref{exa:17}
below.) Indeed, here the gerbe 
$\gerbe{G} / \mrm{Z}(\gerbe{G})$ 
is equivalent to the gerbe $\gerbe{I}$ from Example \ref{exa:15}, and
hence it has one global object (up to isomorphism), say $j$. 
We have a central extension of gerbes
\[ 1 \to  \mcal{O}_X \to \gerbe{G} \xar{F} 
\gerbe{G} / \mrm{Z}(\gerbe{G}) \to 1 , \]
and the obstruction class to lifting $j$ to an object of 
$\gerbe{G}(X)$ is $\opn{cl}^2_F(j) = c$. We see that $j$ is a fake global object
of $\gerbe{G}$, which does not lift to a true global object of
$\gerbe{G}$.
\end{exa}

\begin{rem} \label{rem.1}
The reason we are interested in fake global objects has to do
with {\em twisted deformations}. Let $\K$ be a field of
characteristic $0$, and consider the ring of formal power series
$\K[[\hbar]]$ in the variable $\hbar$. Let $(X, \mcal{O}_X)$ be a
ringed space over $\K$, as in Examples \ref{exa:12}-\ref{exa.3}. As explained in
\cite{Ye}, a twisted (Poisson or associative)
$\K[[\hbar]]$-deformation $\gerbe{A}$ of $\mcal{O}_X$ is made up
of many locally defined sheaves of (Poisson or associative)
$\K[[\hbar]]$-algebras
$\mcal{A}_i$, that are glued together by isomorphisms 
$\opn{Ad}(g) : \mcal{A}_i \iso \mcal{A}_j$,
called {\em gauge equivalences}. The indices 
$i,j, \ldots$ are local objects of the {\em gauge gerbe} $\gerbe{G}$
of $\gerbe{A}$, and the isomorphisms $g$ are local sections
of the bitorsors $\gerbe{G}(i, j)$.
The group $\gerbe{G}(i, i)$, for a local object $i$,
is by definition $\opn{exp}(\hbar \mcal{A}_i)$, where $\mcal{A}_i$ is
viewed as
a pronilpotent $\K[[\hbar]]$-linear Lie algebra, with Lie bracket
being either its
Poisson bracket or the commutator of the associative multiplication.
The deformations $\mcal{A}_i$, for $i \in \opn{Ob} \gerbe{G}(X)$,
are called {\em global deformations
belonging to $\gerbe{A}$}. In case such global deformations do
not exist (i.e.\ the gerbe $\gerbe{G}$ is nontrivial), then we say
$\gerbe{A}$ is {\em really twisted}. Note that the gerbe $\gerbe{G}$
is pronilpotent.

Now consider a global deformation $\mcal{A}$ of the following sort: there is an
open covering $X = \bigcup_{k \in K} U_k$, objects
$i_k \in \opn{Ob} \gerbe{G}(U_k)$, and gauge equivalences
$h_k : \mcal{A}|_{U_k} \iso \mcal{A}_{i_k}$, 
such that
\[ h_{k_1} \circ h_{k_0}^{-1} = \opn{Ad}(g_{k_0, k_1})  \]
for some 
$g_{k_0, k_1} \in \gerbe{G}(U_k)(i, i)$. 
So as gauge equivalences 
$\mcal{A}_{i_{k_0}} \iso \mcal{A}_{i_{k_2}}$
we have the equality
\[ \opn{Ad}(g_{k_0, k_2}) = \opn{Ad}(g_{k_1, k_2}) \circ 
\opn{Ad}(g_{k_0, k_1})  . \]
The local isomorphisms $g$ in the center 
$\mrm{Z}(\gerbe{G}(i, i))$ are precisely those such that the 
gauge equivalences $\opn{Ad}(g)$
are trivial. Hence, going to the extension of gerbes (\ref{eqn:11}), we have
\[  F(g_{k_0, k_2}) = F(g_{k_1, k_2}) \circ 
F(g_{k_0, k_1}) \]
in the gerbe $\gerbe{G} / \mrm{Z}(\gerbe{G})$. 
This implies that the global deformation $\mcal{A}$ corresponds to an object
$j \in \opn{Ob} \, \bigl( \gerbe{G} / \mrm{Z}(\gerbe{G}) \bigr)(X)$,
i.e.\ to a fake global object of $\gerbe{G}$. Therefore we call it a 
{\em global deformation falsely belonging to $\gerbe{A}$}.
Observe that the obstruction class 
$\opn{cl}^2_F(j) \in \check{\mrm{H}}^2(X, \mrm{Z}(\gerbe{G}))$ 
is represented by the cocycle 
\[  g_{k_0, k_1, k_2}  :=  g_{k_0, k_2}^{-1} \circ g_{k_1, k_2} \circ
g_{k_0, k_1} . \]
If $\opn{cl}^2_F(j) \neq 1$ then the deformation $\mcal{A}$ does not truly
belong to $\gerbe{A}$.
\end{rem}

Next a result. 

\begin{prop} \label{prop:3} 
Let $\gerbe{G}$ be a gerbe on $X$, and assume there are enough
$\mrm{Z}(\gerbe{G})$-acyclic open coverings.
Let 
$\bar{\gerbe{G}} := \gerbe{G} / \mrm{Z}(\gerbe{G})$.
\begin{enumerate}
\item If $\check{\mrm{H}}^2(X, \mrm{Z}(\gerbe{G})) = 1$, then the
canonical morphism of groupoids
$\gerbe{G}(X) \to \bar{\gerbe{G}}(X)$
is essentially surjective on objects. In particular any fake global
object of $\gerbe{G}$ lifts to a true global object.
\item If moreover $\check{\mrm{H}}^1(X, \mrm{Z}(\gerbe{G})) = 1$, then 
$\gerbe{G}(X) \to \bar{\gerbe{G}}(X)$
is bijective on isomorphism classes of objects.
\end{enumerate}
\end{prop}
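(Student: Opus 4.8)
The plan is to deduce both parts directly from the obstruction-theoretic machinery of the previous sections, applied to the central extension (\ref{eqn:11}) with $\mcal{N} = \mrm{Z}(\gerbe{G})$.

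For part (1), first I would invoke Theorem \ref{thm:10}: because there are enough $\mrm{Z}(\gerbe{G})$-acyclic open coverings, the obstruction class $\opn{cl}^2_F(j) \in \check{\mrm{H}}^2(X, \mrm{Z}(\gerbe{G}))$ is defined for every fake global object $j \in \opn{Ob} \bar{\gerbe{G}}(X)$. Since the hypothesis forces $\check{\mrm{H}}^2(X, \mrm{Z}(\gerbe{G})) = 1$ as a group, this class vanishes automatically. Theorem \ref{thm.2} then yields an object $i \in \opn{Ob} \gerbe{G}(X)$ with $\bar{\gerbe{G}}(X)\bigl(F(i), j\bigr) \neq \emptyset$, i.e.\ $F(i) \cong j$; this is exactly essential surjectivity of the canonical morphism $\gerbe{G}(X) \to \bar{\gerbe{G}}(X)$, and in particular every fake global object lifts to a true one.

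For part (2), essential surjectivity from part (1) already gives surjectivity on isomorphism classes, so it remains to prove injectivity: if $i, j \in \opn{Ob} \gerbe{G}(X)$ satisfy $F(i) \cong F(j)$ in $\bar{\gerbe{G}}(X)$, then $i \cong j$ in $\gerbe{G}(X)$. I would choose an isomorphism $h \in \bar{\gerbe{G}}(X)\bigl(F(i), F(j)\bigr)$ and form its obstruction class $\opn{cl}^1_F(h) \in \check{\mrm{H}}^1(X, \mrm{Z}(\gerbe{G}))$; this class is unconditionally defined, since Construction \ref{const2} only uses that $F$ is a weak epimorphism. As $\check{\mrm{H}}^1(X, \mrm{Z}(\gerbe{G})) = 1$ by assumption, the class vanishes, and Theorem \ref{thm.3} produces an isomorphism $g \in \gerbe{G}(X)(i, j)$ lifting $h$. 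Hence $i \cong j$, which is precisely injectivity on isomorphism classes.

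There is no serious obstacle here: the content lies entirely in the earlier theorems, and the two parts are obtained by matching the vanishing hypotheses $\check{\mrm{H}}^2 = 1$ and $\check{\mrm{H}}^1 = 1$ against the degree-$2$ and degree-$1$ obstruction classes respectively. The only point requiring care is that the assumption of enough acyclic coverings is genuinely needed in part (1), in order to guarantee via Theorem \ref{thm:10} that $\opn{cl}^2_F(j)$ exists before one can speak of its vanishing; by contrast the degree-$1$ class used in part (2) is always defined and requires no such input.
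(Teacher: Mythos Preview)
Your proof is correct and follows exactly the paper's approach: invoke Theorem \ref{thm:10} so that the obstruction classes exist, then apply Theorem \ref{thm.2} for part (1) and Theorem \ref{thm.3} for part (2). Your additional observation that $\opn{cl}^1_F(h)$ is unconditionally defined (so the acyclicity hypothesis is only needed for the degree-$2$ class) is accurate and slightly sharper than what the paper states.
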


\begin{proof}
By Theorem \ref{thm:10} the obstruction classes 
$\opn{cl}^1_F(h)$ and $\opn{cl}^2_F(j)$ are all defined. 
Assertion (1) is a consequence of Theorem \ref{thm.2}, and assertion (2) 
is a consequence of Theorem \ref{thm.3}.
\end{proof}

\begin{exa} \label{exa.5}
Let $\K$ be a smooth algebraic variety over a field $\K$ of characteristic $0$. 
Suppose $\gerbe{A}$ is a twisted (Poisson or associative)
$\K[[\hbar]]$-deformation of $\mcal{O}_X$ which is symplectic. This means that
the first order bracket
$\{ -,- \}_{\gerbe{A}}$ on $\mcal{O}_X$ is nondegenerate 
(cf.\ \cite{Ye}). 
It follows that the center of the gauge gerbe $\gerbe{G}$ is isomorphic
(canonically) to the constant sheaf $\K[[\hbar]]$. 
Now in the Zariski topology constant sheaves have no higher cohomologies; and
hence Proposition \ref{prop:3} applies. So there are as many global
deformations falsely belonging to $\gerbe{A}$ there are global
deformations truly belonging to $\gerbe{A}$ in this case. 
\end{exa}

\begin{exa} \label{exa:17}
Let $\K$ be a smooth algebraic variety over a field $\K$ of
characteristic $0$, and assume there is a nonzero class 
$c \in \mrm{H}^2(X, \mcal{O}_X)$. Then there is a twisted
associative $\K[[\hbar]]$-deformation $\gerbe{A}$ of $\mcal{O}_X$,
which is commutative (i.e.\ each of the local deformations 
$\mcal{A}_i$ belonging to $\gerbe{A}$ is commutative). See 
\cite[Example 6.17]{Ye} for this construction. The gauge
gerbe here is abelian: 
$\gerbe{G} \cong \opn{exp}(\hbar \mcal{O}_X[[\hbar]])$.
We filter it by
$\gerbe{N}_p := 
\opn{exp}(\hbar^{p+1} \mcal{O}_X[[\hbar]])$.
For $p = 0$ we have a central extension of gerbes
\[ 1 \to \mcal{O}_X \xar{\hbar^{}} 
\gerbe{G} / \gerbe{N}_1 \to
\gerbe{G} / \gerbe{N}_0 \to 1 , \]
and for the unique (up to isomorphism) global object $j$ of the gerbe 
$\gerbe{G} / \gerbe{N}_0$, 
the obstruction class is $\opn{cl}^2(j) = c$
(same as in Example \ref{exa:11}). Hence 
$\opn{Ob} (\gerbe{G} / \gerbe{N}_1) = \emptyset$,
implying that 
$\opn{Ob} \gerbe{G}(X) = \emptyset$, so $\gerbe{A}$ is really
twisted. 
\end{exa}

One of the reasons for introducing the obstruction classes
$\opn{cl}^2_F(j)$
is to address the following question. 

\begin{que} \label{que.1}
Does there exist an algebraic variety
$X$, and a {\em symplectic} twisted
$\K[[\hbar]]$-deformation
$\gerbe{A}$ of $\mcal{O}_X$, that is really twisted? We expect
the answer to be positive. Indeed, we think this happens when
$X$ is any abelian surface, and we take any nonzero Poisson bracket
on $\mcal{O}_X$, and let $\gerbe{A}$ be its canonical
quantization (as in \cite[Theorem 0.1]{Ye}), which is a twisted associative
$\K[[\hbar]]$-deformation of $\mcal{O}_X$.
\end{que}

\begin{exa} 
Let $X$ be a complex analytic manifold, and denote by $\mcal{O}_X$ the sheaf
of holomorphic functions. Let
$\gerbe{A}$ be a symplectic (Poisson or associative) twisted
$\mbb{C}[[\hbar]]$-deformation of
$\mcal{O}_X$. Then the gauge gerbe $\gerbe{G}$ has a
central filtration, with
$\gerbe{N}_p(i) = \opn{exp}(\hbar^{p+1} \mcal{A}_i)$
for a local object $i$. 
And the center of $\gerbe{G}$ is
\[ \opn{exp}( \hbar \mbb{C}[[\hbar]] ) =
\opn{exp} \bigl( \prod\nolimits_{m = 1}^{\infty} \mbb{C} \hbar^m \bigr) 
\subset \gerbe{G} . \]
We put on 
$\bar{\gerbe{G}} := \gerbe{G} / \mrm{Z}(\gerbe{G})$
the induced filtration  $\{ \bar{\gerbe{N}}_p \}_{p \in \N}$.
Define a normal subgroupoid
\[ \gerbe{M}_p := \gerbe{N}_{p+1} \cdot
\opn{exp} \bigl( \prod\nolimits_{m = 1}^p \mbb{C} \hbar^m \bigr) \subset 
\gerbe{G} . \]
Then for every $p \geq 0$ there is a 
morphism of central extensions of gerbes
\[ \UseTips \xymatrix @C=5ex @R=5ex {
1 
\ar[r]
&
\mcal{O}_X
\ar[r]
\ar[d]
&
\gerbe{G} / \gerbe{M}_p
\ar[r]
\ar[d]
& 
\bar{\gerbe{G}} / \bar{\gerbe{M}}_p
\ar[r]
\ar[d]^{=}
& 
1
\\
1 
\ar[r]
&
\mcal{O}_X / \mbb{C}
\ar[r]
&
\bar{\gerbe{G}} / \bar{\gerbe{N}}_{p+1}
\ar[r]^{E_p}
& 
\bar{\gerbe{G}} / \bar{\gerbe{N}}_p
\ar[r]
& 
1 .
} \]
We do not know if there are enough acyclic open coverings for the
sheaf $\mcal{O}_X / \mbb{C}$; but according to Proposition
\ref{prop:7},
for any 
$j \in \opn{Ob} (\bar{\gerbe{G}} / \bar{\gerbe{N}}_p)(X)$ 
the obstruction class 
$\opn{cl}^2_{E_n}(j) \in 
\check{\mrm{H}}^2(X, \mcal{O}_X / \mbb{C})$
exists, and moreover it comes from 
$\check{\mrm{H}}^2(X, \mcal{O}_X)$.

Now assume that the homomorphism 
\begin{equation} \label{eqn:13}
\mrm{H}^2(X, \mbb{C}) \to \mrm{H}^2(X, \mcal{O}_X)
\end{equation}
is surjective. This happens when $X$ is the analytification of 
a projective algebraic variety (cf.\ \cite[Section 5]{NT}). 
Then 
$\opn{cl}^2_{E_n}(j) = 1$ for all $p$.
Presumably Theorem \ref{thm.4} can be refined to work with vanishing of
obstruction classes (rather than vanishing of the whole cohomology groups). This
would imply that fake global deformations exist here. 
\end{exa}

\begin{exa}
Suppose $X$ is a smooth projective algebraic variety over $\mbb{C}$ (with the
Zariski topology), and let $X_{\mrm{an}}$ be the corresponding complex
analytic
manifold. Let $\gerbe{A}$ be a symplectic (Poisson or associative)
twisted $\mbb{C}[[\hbar]]$-deformation of $\mcal{O}_X$. There is an induced
deformation $\gerbe{A}_{\mrm{an}}$ of $\mcal{O}_{X_{\mrm{an}}}$.
As in Example \ref{exa.5}, there are as many global deformations truly
belonging to $\gerbe{A}$ as there are global deformations falsely
belonging to it. By the
GAGA principle we have 
$\mrm{H}^p(X, \mcal{O}_X) =  
\mrm{H}^p(X_{\mrm{an}}, \mcal{O}_{X_{\mrm{an}}})$
for all $p$, and therefore there as many global deformations truly belonging to 
$\gerbe{A}$ as there are global deformations truly belonging to
$\gerbe{A}_{\mrm{an}}$. In particular, there might be none
(see Question \ref{que.1}). On the other hand, by Hodge Theory the homomorphism
(\ref{eqn:13}) is surjective here (cf.\ \cite[Section 1.2]{BK}), and therefore
(under the caveat of refining Theorem \ref{thm.4}) there is always some global
deformation falsely belonging to 
$\gerbe{A}_{\mrm{an}}$.
\end{exa}



\begin{thebibliography}{EGA}
\bibitem[Be]{Be} J. Benabou, Introduction to bicategories, 
	in ``Report of the Midwest Category Seminar I'',  
	LNM {\bf 47}, Springer, 1967, pp.\ 1-77.
\bibitem[BK]{BK} R. Bezrukavnikov and D. Kaledin, 
    Fedosov quantization in algebraic context, 
    Moscow Math.\ J. {\bf 4}, no.\ 3 (2004), 559-592.
\bibitem[Br]{Br} L. Breen, Notes on 1- and 2-gerbes,
	arXiv:math/0611317v2 at http://arxiv.org.  
\bibitem[CA]{CA} Bourbaki, ``Commutative Algebra'', Chapters 1-7,
	Springer, 1989.
\bibitem[Gi]{Gi} J. Giraud, 
    ``Cohomologie non abelienne'', 
    Grundlehren der Math.\ Wiss.\ {\bf 179}, Springer (1971).
\bibitem[Gr1]{Gr1} A. Grothendieck, 
	A general theory of fibre spaces with structure sheaf,
	University of Kansas 1955. Available online at
	http://www.grothendieckcircle.org.
\bibitem[Gr2]{Gr2} A. Grothendieck,
	Sur quelques points d'alg\'ebre homologique, I,
	Tohoku Math.\ J. {\bf 9}, Number 2 (1957), 119-221.
\bibitem[Ha]{Ha} R.\ Hartshorne, ``Algebraic Geometry'',
        Springer-Verlag, New-York, 1977.
\bibitem[KS]{KS} M. Kashiwara and P. Schapira,
    ``Categories and Sheaves'', Springer, 2006.
\bibitem[Le]{Le} T. Leinster,
	``Higher Operads, Higher Categories'', 
	LMS Lecture Notes {\bf 298}, Cambridge Univ.\ Press, 2004.
\bibitem[LMB]{LMB} G. Laumon and L. Moret-Bailly,
        ``Champs algebriques,'' Springer, 2000.
\bibitem[Ma]{Ma} S. Maclane,
	``Categories for the Working Mathematician'', 2nd ed., Springer, 1998.
\bibitem[Mo]{Mo} I. Moerdijk, 
	Introduction to the language of stacks and gerbes,
	arXiv:math/0212266v1 at http://arxiv.org.  
\bibitem[NT]{NT} R. Nest and B. Tsygan, 
    Deformations of symplectic Lie algebroids, 
    deformations of holomorphic symplectic structures, 
    and index theorems,  
    Asian J. Math.\ \textbf{5} (2001), no.\ 4, 599-635.
\bibitem[Ye]{Ye} A.\ Yekutieli,
	Twisted Deformation Quantization of Algebraic Varieties, 
	eprint arXiv:0905.0488v2 at http://arxiv.org.
\end{thebibliography}
\end{document}